\documentclass[a4paper,10pt,onecolumn,twoside]{article}
\usepackage{mathrsfs}
\usepackage{mathrsfs}
\usepackage{amsfonts}
\usepackage{fancyhdr}
\usepackage{amsmath,amsfonts,amssymb,graphicx,amsthm}
\allowdisplaybreaks

\topmargin        -0.08in
\oddsidemargin    -0.08in
\evensidemargin   -0.08in
\marginparwidth    0.00in
\marginparsep      0.00in
\textwidth         17.0cm
\textheight        23.5cm
\vskip 0.2cm

\usepackage{amsmath,latexsym,amsfonts,indentfirst,amsthm,amsxtra,amssymb,bm}

\usepackage{amssymb,mathrsfs,graphicx,enumerate}


\numberwithin{equation}{section}

\arraycolsep1.5pt
 \newtheorem{lemma}{Lemma}[section]
 
 \newtheorem{theorem}{Theorem}[section]

 \theoremstyle{remark}
 \newtheorem{remark}{Remark}[section]

\pagestyle{myheadings}
\markboth {{\rm Guiqiong Gong, Lin He, and Yongkai Liao}}
{{\rm  Rarefaction Waves for a Viscous Radiative and Reactive Gas}}

\numberwithin{equation}{section}

\begin{document}

\title{\bf Nonlinear stability of rarefaction waves for a viscous radiative and reactive gas with large initial perturbation}
\author{{\bf Guiqiong Gong}\\[1mm]
School of Mathematics and Statistics, Wuhan University, Wuhan 430072, China\\
Computational Science Hubei Key Laboratory, Wuhan University, Wuhan 430072, China\\
Email address:gongguiqiong@whu.edu.cn\\[2mm]
{\bf Lin He}\\[1mm]
College of Mathematics, Sichuan University, Chengdu 610064, China\\
Email address: helin19891021@163.com\\[2mm]
{\bf Yongkai Liao}\footnote{Corresponding author. }\\[1mm]
Institute of Applied Physics and Computational Mathematics, Beijing 100088, China\\
Email address: liaoyongkai@126.com
}

\date{}

\maketitle


\begin{abstract}
We investigate the time-asymptotically nonlinear stability of rarefaction waves to the Cauchy problem of an one-dimensional compressible Navier-Stokes type system for a viscous, compressible, radiative and reactive gas, where the constitutive relations for the pressure $p$, the specific internal energy $e$, the specific volume $v$, the absolute temperature $\theta$, and the specific entropy $s$ are given by $p=R\theta/v +a\theta^4/3$, $e=C_v\theta+av\theta^4$, and $s=C_v\ln \theta+ 4av\theta^3/3+R\ln v$ with $R>0$, $C_{v}>0$, and $a>0$ being the perfect gas constant, the specific heat and the radiation constant, respectively.

For such a specific gas motion, a somewhat surprising fact is that, general speaking, the pressure $\widetilde{p}(v,s)$ is not a convex function of the specific volume $v$ and the specific entropy $s$. Even so, we show in this paper that the rarefaction waves are time-asymptotically stable for large initial perturbation provided that the radiation constant $a$ and the strength of the rarefaction waves are sufficiently small. The key point in our analysis is to deduce the positive lower and upper bounds on the specific volume and the absolute temperature, which are uniform with respect to the space and the time variables, but are independent of the radiation constant $a$.

\noindent{\bf Key words:} viscous radiative and reactive gas; rarefaction waves; nonlinear stability; large initial perturbation.
\end{abstract}

\section{Introduction}
In this paper, we investigate the large-time behavior of global, strong, large-amplitude solutions to the Cauchy problem of an one-dimensional compressible Navier-Stokes type system for a viscous radiative and reactive gas. The model is described as follows (cf. \cite{Ducomet-M3AS-1999, Liao-Zhao-CMS-2017, Liao-Zhao-JDE-2018, Umehara-Tani-JDE-2007}):
\begin{eqnarray}\label{1.1}
    v_t-u_x&=&0,\nonumber\\
    u_t+p\left(v,\theta\right)_x&=&\left(\frac{\mu u_x}{v}\right)_x,\nonumber\\
    \left(e+\frac{u^2}{2}\right)_t+(up(v,\theta))_x&=&\left(\frac{\mu u u_{x}}{v}\right)_x+\left(\frac{\kappa\left(v,\theta\right)\theta_{x}}{v}\right)_{x}+\lambda\phi z,\\
    z_{t}&=&\left(\frac{dz_x}{v^{2}}\right)_{x}-\phi z.\nonumber
\end{eqnarray}
Here $x\in\mathbb{R}$ is the Lagrangian space variable, $t\in\mathbb{R}^+$ the time variable. The unknown quantities are the specific volume $v=v\left(t,x\right)$, the velocity\,$u=u\left(t,x\right)$, the absolute temperature\, $\theta=\theta\left(t,x\right)$, and the mass fraction of the reactant\, $z=z\left(t,x\right)$. The positive constants $d$ and $\lambda$ stand for the species diffusion coefficient and the difference in the heat between the reactant and the product, respectively. According to the Arrhenius law \cite{Ducomet-Zlotnik-NonliAnal-2005,Umehara-Tani-JDE-2007}, the reaction rate function $\phi=\phi\left(\theta\right)$ is given by

\begin{equation}\label{1.2}
 \phi\left(\theta\right)=K\theta^{\beta}\exp\left(-\frac{A}{\theta}\right),
\end{equation}
where positive constants $K$ and $A$ represent the coefficients of the rates of the reactant and the activation energy, respectively. Besides, $\beta$ is a non-negative number.

Due to the Stefan-Boltzmann radiative law \cite{Mihalas-Mihalas-1984, Umehara-Tani-JDE-2007}, the pressure $p$ and the specific internal energy $e$ consist of a fourth-order term radiative part in the absolute temperature $\theta$ as well as the perfect polytropic contribution
\begin{equation}\label{1.3}
  p\left(v,\theta\right)=\frac{R\theta}{v}+\frac{a\theta^{4}}{3}, \quad e(v,\theta)=C_{v}\theta+av\theta^{4},
\end{equation}
where the positive constants $R$ and $C_{v}$ are the perfect gas constant and the specific heat capacity at constant volume, respectively. Specifically,  as shown in \cite{Liao-NARWA-2020,Mihalas-Mihalas-1984}, $C_{v}=\frac{3}{2}R$ for the radiative gas. $a>0$ is the radiation constant which measures the amount of heat that is emitted by a blackbody, which absorbs all of the radiant energy that hits it, and will emit all the radiant energy. Moreover, we have (cf. \cite{Liao-NARWA-2020,Mihalas-Mihalas-1984})
\begin{equation}\label{Definition of a}
    a=\frac{4\sigma}{c}=\frac{8\pi^{5}k_{B}^{4}}{15c^{3}h^{3}},
\end{equation}
where $\sigma$ is the Stefan-Boltzmann constant, $c$ is the speed of light, $k_{B}$ is the Boltzmann constant, and $h$ is the Planck's constant. Numerically, $a=7.5657\times 10^{-16}\text{J}\text{m}^{-3}\text{K}^{-4}$. In general, compared with that of the perfect gas constant $R$ and the specific heat $C_{v}$, the radiation constant $a$ is much smaller.

On the other hand, one can conclude from \eqref{1.3} and the second law of thermodynamics that
\begin{equation}\label{entropy}
s(v,\theta)=C_v\ln \theta+\frac 43av\theta^3+R\ln v.
\end{equation}
If one takes $a=0$, then the above constitutive relations for the five thermodynamical variables $p, v, \theta, s$ and $e$ given by \eqref{1.3} and \eqref{entropy} reduce to the equations of state for ideal polytropic gas. If $a>0$, and we choose $(v,\theta)$ or $(v,s)$ as independent variables and write $(p,e,s)=(p(v,\theta), e(v,\theta),s(v,\theta))$ or $(p,e,\theta) =(\tilde{p}(v,s),\tilde{e}(v,s),\tilde{\theta}(v,s))$, respectively,
then after cumbersome calculations, we can deduce that (see \cite{Liao-NARWA-2020} for details )

\begin{eqnarray}\label{p7}
\frac{\partial^2\tilde{p}(v,s)}{\partial v^2}&=&\frac{1}{s^{3}_{\theta}}\left[\frac{C_{v}R^{3}
+3C_{v}^{2}R^{2}+2C_{v}^{3}R}{v^{3}\theta^{2}} +\frac{\left(40aC_{v}R^{2}+28aC_{v}^{2}R-8aR^{3}\right)\theta}{v^{2}}\right.\nonumber\\
&&\left.+\frac{\left(496a^{2}C_{v}R+192a^{2}R^{2}\right)\theta^{4}}{3v}+\frac{\left(640a^{3}C_{v}+7488a^{3}R\right)\theta^{7}}{27}
+\frac{1792a^{4}v\theta^{10}}{27}\right],
\end{eqnarray}
\begin{eqnarray}\label{p8}
\frac{\partial^2\tilde{p}(v,s)}{\partial s^2}=\frac{(\tilde{\theta}_{s})^{2}}{s_{\theta}} \left[\frac{C_{v}R}{v\theta^{2}}
+\left(\frac{16aC_{v}}{3}-8aR\right)\theta +\frac{16a^{2}v\theta^{4}}{3}\right],
\end{eqnarray}
and
\begin{eqnarray}\label{p9}
&&\frac{\partial^2\tilde{p}(v,s)}{\partial s^2} \frac{\partial^2\tilde{p}(v,s)}{\partial v^2}-\left(\frac{\partial^2\tilde{p}(v,s)}{\partial v\partial s}\right)^2\nonumber\\
&=&\frac{(\tilde{\theta}_{s})^{2}}{s^{2}_{\theta}} \left[\frac{C_{v}R^{3}+C_{v}^{2}R^{2}}{\theta^{2}v^{4}}
+\frac{\left(32aC_{v}^{2}R-52aC_{v}R^{2}-24aR^{3}\right)\theta}{3v^{3}}\right.\\
&&\left.+\frac{\left(448a^{2}C_{v}R-1200a^{2}R^{2}\right)\theta^{4}}{9v^{2}}
-\frac{320a^{3}R\theta^{7}}{9v}-\frac{256a^{4}\theta^{10}}{9}\right].\nonumber
\end{eqnarray}

From \eqref{p7}, \eqref{p8}, and \eqref{p9}, it is easy to see that $\tilde{p}(v,s)$ is a convex function of $v$ and $s$ for the ideal polytropic gas, while if $a>0$, it is not clear whether $\tilde{p}(v,s)$ is a convex function of $v$ and $s$ or not.

As in \cite{Liao-Zhao-JDE-2018, Umehara-Tani-JDE-2007, Umehara-Tani-PJA-2008}, we also assume that the bulk viscosity $\mu$ is a positive constant and the thermal conductivity $\kappa=\kappa\left(v,\theta\right)$ takes the form
\begin{equation}\label{1.4}
\kappa\left(v,\theta\right)=\kappa_{1}+\kappa_{2}v\theta^{b},
\end{equation}
where $\kappa_{1}$, $\kappa_{2}$, and $b$ are both positive constants. Furthermore, the system \eqref{1.1} is supplemented with the initial data
\begin{equation}\label{1.5}
 \left(v\left(0,x\right),u\left(0,x\right),\theta\left(0,x\right), z\left(0,x\right)\right)=\left(v_0\left(x\right),u_0\left(x\right),\theta_{0}\left(x\right), z_{0}\left(x\right)\right)
\end{equation}
for $x\in\mathbb{R}$, which is assumed to satisfy the far-field condition:
\begin{equation}\label{1.6}
\lim_{|x|\rightarrow\infty}\left(v_0\left(x\right),u_0\left(x\right),\theta_{0}\left(x\right), z_{0}\left(x\right)\right)=(v_\pm,u_\pm,\theta_\pm,0).
 \end{equation}
Here $v_{\pm}>0$, $u_{\pm}$ and $\theta_{\pm}>0$ are prescribed constants.

The problem on the global solvability and the precise description of the large-time behaviors of the global solutions constructed to the initial value problem and the initial-boundary value problems of system \eqref{1.1}, \eqref{1.2}, \eqref{1.3}, \eqref{1.4}, \eqref{1.5} is a hot topic in the filed of nonlinear partial differential equations and many results have been obtained recently. A complete literature in this direction is beyond the scope of this paper and to go directly to the main points of the present paper, in what follows we only review some former results which are closely related to our main results.

\begin{itemize}
\item For multidimensional case, the global existence, uniqueness and exponential stability of spherically (cylindrically) symmetric solutions in a bounded concentric annular domain were studied in \cite{Qin-Zhang-Su-Cao-JMFM-2016, Umehara-Tani-AA-2008, Zhang-Non-2017}. Recently, \cite{Liao-Wang-Zhao-2018} investigated the global solvability and the precise description of the large time behavior of the global solutions constructed in an exterior domain. Here, the asymptotics of the global solutions constructed in \cite{Qin-Zhang-Su-Cao-JMFM-2016, Umehara-Tani-AA-2008, Zhang-Non-2017} and \cite{Liao-Wang-Zhao-2018}, as in \cite{Jiang-ZHeng-JMP-2012, Jiang-ZHeng-ZAMP-2014} and \cite{He-Liao-Wang-Zhao-2017, Liao-Zhao-JDE-2018}, are constant equilibrium states $(v_\infty, u_\infty,\theta_\infty,0)$ of \eqref{1.1} satisfying $v_\infty>0, \theta_\infty>0$, which are uniquely determined by the initial data for the corresponding initial-boundary value problem in bounded domain and by the far fields of the initial data for the case in an exterior domain;

\item For the one-dimensional initial-boundary value problem in the interval $[0,1]$, the existence and uniqueness of global classical solutions was established in \cite{Ducomet-M3AS-1999} for the following initial-boundary value problem
\begin{eqnarray}\label{IBVP-I}
 \left(v\left(0,x\right),u\left(0,x\right),\theta\left(0,x\right), z\left(0,x\right)\right)&=&\left(v_0\left(x\right),u_0\left(x\right),\theta_{0}\left(x\right), z_{0}\left(x\right)\right),\quad x\in(0,1),\nonumber\\
 u(t,x)&=&0,\quad x=0,1,\quad t\geq 0,\\
 \left(\theta_x(t,x),z_x(t,x)\right)&=&(0,0), \quad x=0,1,\quad  t\geq 0,\nonumber
\end{eqnarray}
while for the initial-boundary value problem
\begin{eqnarray}\label{IBVP-II}
 \left(v\left(0,x\right),u\left(0,x\right),\theta\left(0,x\right), z\left(0,x\right)\right)&=&\left(v_0\left(x\right),u_0\left(x\right),\theta_{0}\left(x\right), z_{0}\left(x\right)\right),\quad x\in(0,1),\nonumber\\
 \sigma(t,x)\equiv -p(v(t,x),\theta(t,x))+\frac{\mu u_x(t,x)}{v(t,x)}&=&-p_e,\quad x=0,1,\quad t\geq 0,\\
 \left(\theta_x(t,x),z_x(t,x)\right)&=&(0,0), \quad x=0,1,\quad  t\geq 0\nonumber
\end{eqnarray}
    for some positive constant $p_e>0$, similar global solvability results were obtained in \cite{Liao-Zhao-CMS-2017, Qin-Hu-Wang-Huang-Ma-JMAA-2013, Umehara-Tani-JDE-2007, Umehara-Tani-PJA-2008}. Moreover, it is shown in \cite{Jiang-ZHeng-JMP-2012} and \cite{Jiang-ZHeng-ZAMP-2014} that the asymptotics of the global solutions constructed above can be exactly described by
    $\left(1, 0, \theta_\infty, 0\right)$ with $\theta_\infty$ being a positive constant uniquely determined by
    $$
    C_v\theta_\infty+a\theta^4_\infty=\int^1_0\left(\frac 12\left|u_0(x)\right|^2+C_v\theta_0(x)+av_0(x)\left|\theta_0(x)\right|^4+\lambda z_0(x)\right)dx
    $$ for the initial-boundary value problem \eqref{1.1}, \eqref{1.2}, \eqref{1.3}, \eqref{1.4}, \eqref{1.5}, \eqref{IBVP-I} and $\left(v_\infty, 0, \theta_\infty, 0\right)$ with $v_\infty$ and $\theta_\infty$ are positive constants uniquely determined by
    \begin{eqnarray*}
    \frac{R\theta_\infty}{v_\infty}+\frac a3\theta^4_\infty&=&p_e,\\
    C_v\theta_\infty+av_\infty\theta^4_\infty p_ev_\infty&=&\int^1_0\left(\frac 12\left|u_0(x)\right|^2+C_v\theta_0(x)+av_0(x)\left|\theta_0(x)\right|^4+\lambda z_0(x)+p_ev_0(x)\right)dx
    \end{eqnarray*}
     for the initial-boundary value problem \eqref{1.1}, \eqref{1.2}, \eqref{1.3}, \eqref{1.4}, \eqref{1.5}, \eqref{IBVP-II}, respectively. Note that since $\int^1_0 v(t,x)ds$ is conserved for the initial-boundary value problem \eqref{1.1}, \eqref{1.2}, \eqref{1.3}, \eqref{1.4}, \eqref{1.5}, \eqref{IBVP-I}, while $\int^1_0u(t,x)dx$ is conserved for the initial-boundary value problem \eqref{1.1}, \eqref{1.2}, \eqref{1.3}, \eqref{1.4}, \eqref{1.5}, \eqref{IBVP-II}, respectively, one can thus assume without loss of generality that $\int^1_0v_0(x)dx=1$ for the initial-boundary value problem \eqref{1.1}, \eqref{1.2}, \eqref{1.3}, \eqref{1.4}, \eqref{1.5}, \eqref{IBVP-I} and $\int^1_0u_0(x)dx=0$ for the initial-boundary value problem \eqref{1.1}, \eqref{1.2}, \eqref{1.3}, \eqref{1.4}, \eqref{1.5} and \eqref{IBVP-II};

\item For the Cauchy problem \eqref{1.1}, \eqref{1.2}, \eqref{1.3}, \eqref{1.4}, \eqref{1.5}, \eqref{1.6}, the existence of a unique global solution was established very recently in \cite{Liao-Zhao-JDE-2018,Liao-ActaMSci-2019} for the case when the far fields $(v_\pm, u_\pm, \theta_\pm)$ of the initial data $(v_0(x), u_0(x), \theta_0(x))$ are equal, i.e., $(v_-,u_-,\theta_-)=(v_+, u_+,\theta_+)$. See also \cite{He-Liao-Wang-Zhao-2017} for the case with temperature-dependent viscosity and \cite{Liao-Xu-Zhao-SCM-2018} for the case with density-dependent viscosity. Here since $(v_-,u_-,\theta_-)=(v_+, u_+,\theta_+)$, the asymptotics of the global solutions constructed in \cite{He-Liao-Wang-Zhao-2017, Liao-Zhao-JDE-2018} are exactly the far fields $(v_\pm, u_\pm,\theta_\pm, 0)$ of the initial data $(v_0(x), u_0(x), \theta_0(x), z_0(x))$. The asymptotic stability of 1-rarefaction wave to the system $\eqref{1.1}_{1}$-$\eqref{1.1}_{3}$ ($z=0$), \eqref{1.2}, \eqref{1.3}, \eqref{1.5}, and \eqref{1.6} without viscosity ($\mu=0$) under the small perturbation was studied in \cite{Li-Wang-Yang-2020}. Recently, Liao \cite{Liao-NARWA-2020} have studied nonlinear stability of rarefaction waves for the system \eqref{1.1}, \eqref{1.2}, \eqref{1.3}, \eqref{1.4}, \eqref{1.5}, and \eqref{1.6} when the viscosity $\mu$ takes the following form:
\begin{eqnarray}\label{1.40}
\mu=\mu\left(v,\theta\right)=h\left(v\right)\theta^{\alpha},\quad h\left(v\right)\sim
 \begin{cases}
 v^{-\ell_{1}}, & $$ v\rightarrow 0^{+} $$,\\
 v^{\ell_{2}}, & $$ v\rightarrow \infty $$,
 \end{cases}
 \quad v\left|h'(v)\right|^{2}\leq Ch^{3}(v).
\end{eqnarray}
Here $h(v)$ is a smooth function of $v$ for $v>0$ and $\alpha$, $\ell_{1}$, and $\ell_{2}$ are positive constants. It should be pointed out that \eqref{1.40} can not cover the case when $\mu$ is a positive constant even when $\alpha$ goes to zero.

\end{itemize}

The main purpose of this manuscript is to study the nonlinear stability of rarefaction waves for the system \eqref{1.1}, \eqref{1.2}, \eqref{1.3}, \eqref{1.4}, \eqref{1.5}, and \eqref{1.6} with constant viscosity ($\mu\equiv C$) under the large initial perturbation.
For the Cauchy problem \eqref{1.1}, \eqref{1.2}, \eqref{1.3}, \eqref{1.4}, \eqref{1.5}, \eqref{1.6}, if the far fields $(v_\pm, u_\pm,\theta_\pm)$ of the initial data $(v_0(x), u_0(x), \theta_0(x))$ are not equal, i.e., $(v_-,u_-,\theta_-)\not=(v_+,u_+,\theta_+)$, the asymptotics of the global solutions should be nontrivial and are expected to be described by the unique global entropy solution $\left(V^r(x/t), U^r(x/t), \Theta^r(x/t),0\right)$ of the resulting Riemann problem of the corresponding compressible Euler equations
\begin{eqnarray}\label{1.10}
v_t - u_x &=&0,\nonumber\\
u_t+p(v,\theta)_x&=&0,\nonumber\\
\left(e+\frac {u^2}{2}\right)_t+(up(v,\theta))_x&=&0,\\
z_t&=&0\nonumber
\end{eqnarray}
with Riemann data
\begin{equation}\label{1.11}
(v(0,x),u(0,x),\theta(0,x),z(0,x))=\left(v_{0}^{r}(x),u_{0}^{r}(x),\theta_{0}^{r}(x),z_{0}^{r}(x)\right)=
\left\{
\begin{array}{rl}
(v_-,u_-,\theta_-,0),& x<0,\\[1mm]
(v_+,u_+,\theta_+,0),& x>0.
\end{array}
\right.
\end{equation}
In fact , it is expected, cf. \cite{Kawashima-Matsmura-CMP-1985, Kawashima-Matsmura-Nishihara-PJA-1986, Liu-CPAM-1986, Liu-Xin-CMP-1988, Liu-Xin-AJM-1997, Liu-Zeng-MAMS-1997, Matsmura-Nishihara-JJAM-1986, Smoller-1999} and the references cited therein, that if the unique global entropy solution $\left(V^r(x/t), U^r(x/t), \Theta^r(x/t), 0\right)$ of the Riemann problem \eqref{1.10}, \eqref{1.11} consists of rarefaction waves $(V^{R_i}(x/t), U^{R_i}(x/t), \Theta^{R_i}(x/t),0)$ of the $i-$th family ($i=1,3$), shock waves $(V^{S_i}(x/t), U^{S_i}(x/t), \Theta^{S_i}(x/t),$ $0)$ of the $i-$th family ($i=1,3$), contact discontinuity $(V^{CD}(x/t), U^{CD}(x/t), \Theta^{CD}(x/t), 0)$ of the second family, and/or their superpositions, then the large time behavior of the global solution $(v(t,x), u(t,x),\theta(t,x), z(t,x))$ of the Cauchy problem \eqref{1.1}, \eqref{1.2}, \eqref{1.3}, \eqref{1.4}, \eqref{1.5}, \eqref{1.6} is expected to be well-described by the rarefaction wave $(V^{R_i}(x/t), U^{R_i}(x/t), \Theta^{R_i}(x/t),0)$ of the $i-$th family ($i=1,3$), viscous shock profile $(V^{VSW_i}(x-s_it), U^{VSW_i}(x-s_it), \Theta^{VSW_i}(x-s_it),0)$ of the $i-$th family ($i=1,3$) under suitable shift, viscous contact discontinuity wave $(V^{VCD}(t,x), U^{VCD}(t,x),$ $\Theta^{VCD}(t,x), 0)$ of the second family, and/or their superpositions.

As in \cite{Duan-Liu-Zhao-TAMS-2009,Liao-NARWA-2020}, it will be convenient to consider the following equations for the entropy $s$ and the absolute temperature $\theta$:
\begin{equation}\label{1.7}
s_t=\left(\frac{\kappa(v,\theta)\theta_x}{v\theta}\right)_x+\frac{\kappa(v,\theta)\theta^2_x}{v\theta^2}
+\frac{\mu u_x^2}{v\theta}+\frac{\lambda\phi z}{\theta},
\end{equation}
and
\begin{equation}\label{1.8}
\theta_t+\frac{\theta p_{\theta}u_{x}}{e_{\theta}}=\frac{1}{e_{\theta}}\left(\frac{\kappa(v,\theta)\theta_x}{v}\right)_x+\frac{\mu u_x^2}{ve_{\theta}}+\frac{\lambda\phi z}{e_{\theta}}.
\end{equation}
Notice that $p_{\theta}:=\frac{\partial p(v,\theta)}{\partial\theta}=\frac Rv+\frac 43a\theta^3$ and $e_{\theta}:=\frac{\partial e(v,\theta)}{\partial\theta}=C_v+4av\theta^3$.

From now on, we will consider (\ref{1.1})$_1$, (\ref{1.1})$_2$, (\ref{1.7}), (\ref{1.1})$_4$ with the initial data
\begin{equation}\label{1.9}
(v(t,x),u(t,x),s(t,x),z(t,x))|_{t=0}=(v_0(x),u_0(x),s_0(x),z_{0}(x))\to (v_\pm,u_\pm,s_\pm,0)  \quad \textrm{as}\  x \to \pm\infty.
\end{equation}
Here $v_{\pm}>0$, $u_{\pm}$, $s_{\pm}:=C_v\ln \theta_\pm+\frac 43av_\pm\theta_\pm^3+R\ln v_\pm$ are constants and $s_0(x):=C_v\ln \theta_0(x)+\frac 43av_0(x)\theta_0(x)^3+R\ln v_0(x)$. Moreover, we assume that $s_+=s_-=\bar{s}$ for considering the expansion waves to (\ref{1.1}).

It is well known that the equations (\ref{1.1}) can be approximated by the Riemann problem of the following equations:

\begin{eqnarray}\label{1.10a}
             v_t - u_x &=&0, \nonumber\\
            u_t+({\tilde{p}}(v,s))_x&=&0 ,\nonumber\\
             s_t&=&\frac{\lambda\phi z}{\theta},\\
             z_t&=&-\phi z,\nonumber
\end{eqnarray}
with Riemann data
\begin{equation}\label{1.11a}
(v(0,x),u(0,x),s(0,x),z(0,x))=(v_{0}^{R}(x),u_{0}^{R}(x),s_{0}^{R}(x),z_{0}^{R}(x))=
\left\{
\begin{array}{rl}
(v_-,u_-,s_-,0),& x<0,\\
(v_+,u_+,s_+,0),& x>0.
\end{array}
\right.
\end{equation}

The solutions of the Riemann problem (\ref{1.10a})-(\ref{1.11a}) have two characteristics which leads to two families of expansion (rarefaction) waves: the 1-rarefaction wave $\left(V^R_1(\frac{x}{t}),~U^R_1(\frac{x}{t}),~\bar{s},0\right)$ and the 3-rarefaction wave
$\left(V^R_3(\frac{x}{t}),~U^R_3(\frac{x}{t}),~\bar{s},0\right)$. We define the regime
\begin{eqnarray*}
\mathbb{R}_1(v_-, u_-,\bar{s},0)&=&\left\{(v, u, s,z)| u=u_-+\displaystyle\int_{v_-}^v \sqrt{-\tilde{p}_{\xi}(\xi,\bar{s})}\mathrm{d}\xi, u\geq u_-, s=\bar{s}, z=0 \right\},\\
\mathbb{R}_3(v_m, u_m,\bar{s},0)&=&\left\{(v, u, s,z)| u=u_m-\displaystyle\int_{v_m}^v \sqrt{-\tilde{p}_{\xi}(\xi,\bar{s})}\mathrm{d}\xi, u\geq u_m, s=\bar{s}, z=0 \right\},
\end{eqnarray*}
and further assume that there exists a unique constant state $(v_m, u_m)\in \mathbb{R}^2 (v_m >0)$, which satisfies $(v_m, u_m)\in \mathbb{R}_1 (v_-, u_-)$ and $(v_+, u_+) \in \mathbb{R}_3 (v_m, u_m)$. Then the unique weak solution
$\left(V^R(\frac{x}{t}),U^R(\frac{x}{t}),S^R(\frac{x}{t}),0\right)$ to the system (\ref{1.10a})-(\ref{1.11a}) is characterized by
\begin{equation}\label{1.12}
\left(V^R\left(\frac{x}{t}\right), U^R\left(\frac{x}{t}\right), S^R\left(\frac{x}{t}\right),0\right)\\
=\left(V^R_1\left(\frac{x}{t}\right)+V^R_3\left(\frac{x}{t}\right)-v_m,
~U^R_1\left(\frac{x}{t}\right)+U^R_3\left(\frac{x}{t}\right)-u_m,~\bar{s},0\right),
\end{equation}
with $(V_i^R(\frac{x}{t}),~U_i^R(\frac{x}{t}),~S^R(\frac{x}{t}),0) (i=1,3)$ satisfy the following equations:
\begin{eqnarray}\label{1.13}
    S^R\left(\frac{x}{t}\right)&=&\bar{s},\nonumber\\
   U_1^R\left(\frac{x}{t}\right)-\displaystyle\int_1^{V_1^R\left(\frac{x}{t}\right)}\sqrt{-\tilde{p}_{\xi}(\xi,\bar{s})} \mathrm{d}\xi&=&u_--
   \displaystyle\int^{v_-}_1\sqrt{-\tilde{p}_{\xi}(\xi,\bar{s})}\mathrm{d}\xi,\nonumber\\
  \lambda_{1x}\left(V_1^R\left(\frac{x}{t}\right),\bar{s}\right)&>&0,\nonumber\\
  \lambda_1(v,s)&=&-\sqrt{-\tilde{p}_{v}(v,s)},\\
  U_3^R\left(\frac{x}{t}\right)+\displaystyle\int_1^{V_3^R\left(\frac{x}{t}\right)} \sqrt{-\tilde{p}_{\xi}(\xi,\bar{s})}\mathrm{d}\xi &=&u_m+\displaystyle\int_1^{v_m}\sqrt{-\tilde{p}_{\xi}(\xi,\bar{s})}\mathrm{d}\xi,\nonumber\\
  \lambda_{3x}\left(V_3^R\left(\frac{x}{t}\right),\bar{s}\right)&>&0,\nonumber\\
  \lambda_3(v,s)&=&\sqrt{-\tilde{p}_{v}(v,s)}.\nonumber
\end{eqnarray}

To construct the approximate waves $(V(t,x), U(t,x), S(t,x),0)$, we begin with the following Burger's equation (cf. \cite{Matsmura-Nishihara-JJAM-1986}). Let $\omega_i(t,x)$  $(i=1,3)$ be the unique global smooth solution to the Cauchy problem
\begin{eqnarray}\label{1.14}
 \omega_{it}+\omega_{i}\omega_{ix}&=&0,\\
 \omega_i(t,x)|_{t=0}&=&\omega_{i0}(x)=\frac{w_{i+}+w_{i-}}{2}+\frac{w_{i+}-w_{i-}}{2}K_q\displaystyle\int^{\epsilon x}_0(1+y^2)^{-q}\mathrm{d}y,\nonumber
\end{eqnarray}
where $q>\frac{3}{2}$, $K_q=\left(\int^{+\infty}_0(1+y^2)^{-q}\mathrm{d}y\right)^{-1}, \epsilon >0$ is a positive constant to be determined later, and
\begin{eqnarray*}
  \omega_{1-}&=&\lambda_1(v_-,\bar{s})=-\sqrt{-\tilde{p}_{v}(v_{-},\bar{s})},\\
  \omega_{1+}&=&\lambda_1(v_m,\bar{s})=-\sqrt{-\tilde{p}_{v}(v_{m},\bar{s})},\\
  \omega_{3-}&=&\lambda_3(v_m,\bar{s})=\sqrt{-\tilde{p}_{v}(v_{m},\bar{s})},\\
  \omega_{3+}&=&\lambda_3(v_+,\bar{s})=\sqrt{-\tilde{p}_{v}(v_{+},\bar{s})}.
\end{eqnarray*}
Then, by setting $\epsilon=\delta=|v_--v_+|+|u_--u_+|$, the approximate rarefaction waves $(V(t,x), U(t,x), S(t,x),0)$ is defined by
\begin{eqnarray}\label{1.15}
&&\left(V(t,x),U(t,x),S(t,x),0\right)\\
&=& \left(V_1(t+1, x)+V_3(t+1, x)-v_m, U_1(t+1,x)+U_3(t+1, x)-u_m, \bar{s},0\right),\nonumber
\end{eqnarray}
where $(V_i(t,x), U_i(t,x))(i=1,3)$ satisfy
\begin{eqnarray}\label{1.16}
  \lambda_i(V_i(t,x),\bar{s})&=&\omega_i(t,x),i=1,3,\nonumber\\
  \lambda_1(v,s)&=&-\sqrt{-\tilde{p}_{v}(v,s)},\nonumber\\
  \lambda_3(v,s)&=&\sqrt{-\tilde{p}_{v}(v,s)},\\
  U_1(t,x)&=&u_-+{\displaystyle\int^{V_1(t,x)}_{v_-}}\sqrt{-\tilde{p}_{\xi}(\xi,\bar{s})}\mathrm{d}\xi,\nonumber\\
  U_3(t,x)&=&u_m-{\displaystyle\int^{V_3(t,x)}_{v_m}}\sqrt{-\tilde{p}_{\xi}(\xi,\bar{s})}\mathrm{d}\xi,\nonumber
\end{eqnarray}
and $\Theta(t,x)$ is given by
\begin{eqnarray*}
\Theta(t,x)=\tilde{\theta}(V(t,x),\bar{s}).
\end{eqnarray*}

Furthermore, if we denote the strength of the rarefaction waves by
\begin{eqnarray*}
\delta=|v_--v_+|+|u_--u_+|,
\end{eqnarray*}
then our main result is the following stability theorem.
\begin{theorem}\label{Th1.1}
Suppose that
\begin{itemize}
\item The parameters $b$ and $\beta$ are assumed to satisfy
\begin{equation*}
b>6, \quad 0\leq\beta< b+3;
 \end{equation*}
\item There exist positive constants $0<\underline{V}\leq 1$, $\overline{V}>1$, $0<\underline{\Theta}\leq 1$, $\overline{\Theta}>1$, which do not depend on the strength of the rarefaction wave $\delta$ and the radiation constant $a$, such that
\begin{eqnarray*}
2\underline{V}&\leq& v_0(x)\leq \frac{1}{2}\overline{V},\\
2\underline{V}&\leq& V(t,x)\leq \frac{1}{2}\overline{V},\\
2\underline{\Theta} &\leq& \theta_0(x) \leq \frac{1}{2}\overline{\Theta},\\
2\underline{\Theta} &\leq& \Theta(t,x) \leq \frac{1}{2}\overline{\Theta}
\end{eqnarray*}
hold for all $(t,x) \in \mathbb{R}_+\times\mathbb{R}$,
\begin{eqnarray*}
  \left(v_{0}(x)-V(0,x), u_{0}(x)-U(0,x), \theta_{0}(x)-\Theta(0,x), z_{0}(x)\right)\in H^{1}\left(\mathbb{R}\right),\nonumber\\
   \frac{\partial^2\left(u_{0}(x)-U(0,x)\right)}{\partial x^2}\in L^{2}\left(\mathbb{R}\right),\quad z_{0}(x)\in L^{1}\left(\mathbb{R}\right),\\
     0\leq z_{0}\left(x\right)\leq 1,  \quad \forall x\in\mathbb{R},\nonumber
  \end{eqnarray*}
  and $H_0:=\|(v_0(x)-V(0,x), u_0(x)-U(0,x), \theta_0(x)-\Theta(0,x),z_0(x))\|_{H^1(\mathbb{R})}$ together with $v_\pm, u_\pm, \theta_\pm$ are assumed to be independent of $\delta$ and $a$.
\end{itemize}
Then there exist positive constants $\delta_0$ and $a_0$, which depend only on $\underline{V}$, $\underline{\Theta}$,  and $H_0$, such that
\begin{equation}\label{Assumptions on strength and radiation constant}
0<\delta\leq \delta_0,\quad 0<a\leq a_0,
\end{equation}
the system \eqref{1.1}, \eqref{1.2}, \eqref{1.3}, \eqref{1.4}-\eqref{1.6} admits a unique global solution $\left(v(t,x), u(t,x), \theta(t,x), z(t,x)\right)$ which satisfies
\begin{eqnarray*}
C_{1}^{-1}\leq v(t,x)&\leq&C_{1},\nonumber\\
C_{2}^{-1}\leq\theta(t,x)&\leq&C_{2},\\
0\leq z(t,x)&\leq& 1\nonumber
\end{eqnarray*}
for all $\left(t,x\right)\in [0,\infty)\times\mathbb{R}$ and
\begin{eqnarray}
&&\sup\limits_{0\leq t<\infty}\left\|\left(v-V, u-U, \theta-\Theta, z\right)(t)\right\|_{H^{1}(\mathbb{R})}^{2}\nonumber\\
&&+\int_{0}^{\infty}\left(\left\|\partial_{x}\left(v-V\right)(\tau)\right\|^{2}_{L^2(\mathbb{R})}
+\big\|\big(\partial_{x}\left(u-U\right), \partial_{x}\left(\theta-\Theta\right), \partial_{x}z\big)(\tau)\big\|^{2}_{H^{1}\left(\mathbb{R}\right)}\right)d\tau\\
&\leq& C.\nonumber
\end{eqnarray}
Here $C_{1}$, $C_{2}$, and $C$ are some positive constants depending only on $\underline{V}$, $\underline{\Theta}$, and $H_0$.

Moreover, it holds that
\begin{equation}
 \lim_{t\to +\infty}\sup_{x \in \mathbb{R}}\Big\{\left|\left(v(t,x)-V^R(t,x),u(t,x)-U^R(t,x),s(t,x)-\overline{s},
 z(t,x)\right)\right|\Big\}=0.
\end{equation}
\end{theorem}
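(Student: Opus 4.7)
\bigskip

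\noindent\textbf{Proof sketch.}

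The plan is to run the standard continuation argument: combine the local existence theory for (\ref{1.1})--(\ref{1.5}) (which is classical once one sets up the perturbation $(\phi,\psi,\zeta,z):=(v-V,u-U,\theta-\Theta,z)$ and works in $H^1$) with a global a priori estimate that, crucially, gives pointwise bounds on $v$ and $\theta$ depending only on $\underline{V},\underline{\Theta},H_0$ and not on $\delta$ or $a$. The well-known decay and interaction properties of the approximate rarefaction profile $(V,U,\Theta)$ built from (\ref{1.14})--(\ref{1.16}) (namely $\|(V_x,U_x,\Theta_x)\|_{L^\infty}\lesssim \delta$, $\|\partial_x(V,U,\Theta)\|_{L^p}$ decay in $t$, and the nonnegativity $U_x\ge 0$) will be used throughout as a black box. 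By the maximum/super-sub-solution structure of the $z$-equation, one immediately gets $0\le z\le 1$ and a time-integrable bound on $\int\phi z\,dx$, so the reaction can be treated as a small forcing term.

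The heart of the proof is the basic energy inequality. The natural functional is the relative mechanical energy plus relative entropy
\begin{equation*}
\mathcal{E}(t)=\int_{\mathbb{R}}\Bigl[\tfrac12(u-U)^2 + \tilde{e}(v,s)-\tilde{e}(V,\bar{s}) -\tilde{\theta}(V,\bar{s})\bigl(s-\bar{s}\bigr) + \tilde{p}(V,\bar{s})\bigl(v-V\bigr)\Bigr]dx,
\end{equation*}
which, when $a=0$, is positive-definite thanks to the convexity of $\tilde{p}$ for the ideal gas. For $a>0$ the expressions (\ref{p7})--(\ref{p9}) are not definite; however each troublesome term carries a factor of $a$ (or higher powers), so taking $a$ small compared with the quadratic lower bound coming from the ideal-gas part we still obtain $\mathcal{E}(t)\gtrsim\|( \phi,\psi,\theta-\Theta)\|_{L^2}^2$. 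Differentiating $\mathcal{E}(t)$ along the system, the rarefaction-wave commutator terms are controlled by $\delta$ times time-integrable factors in the usual Matsumura--Nishihara fashion, yielding a basic estimate
\begin{equation*}
\mathcal{E}(t)+\int_0^t\!\!\int_{\mathbb{R}}\Bigl(U_x(\phi,\psi,\zeta)^2 + \tfrac{u_x^2}{v\theta}+\tfrac{\kappa\theta_x^2}{v\theta^2}\Bigr)dx\,d\tau \le C(H_0)+C\delta^{1/4}.
\end{equation*}

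The main obstacle, as the authors themselves emphasize, is deriving pointwise bounds on $v$ and $\theta$ that do not deteriorate with $a$ or $\delta$ and are compatible with a large initial perturbation. For the bound on $v$, I would follow Kanel'/Kazhikhov: introduce the auxiliary quantity
\begin{equation*}
\Phi(v)=\int_{\underline{V}}^{v}\frac{\mu}{\xi}\,d\xi,
\end{equation*}
use $(\ref{1.1})_1$ to rewrite $\mu v_x/v=(\ln v)_t\cdot\mu$ up to the momentum flux, and express $\Phi(v)-\Phi(V)$ in terms of the integrated momentum equation and quantities already controlled by the basic estimate; the radiative pressure contribution $a\theta^4/3$ in $p$ is absorbed thanks to smallness of $a$. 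This yields $\underline{V}/C\le v\le C\overline{V}$ uniformly. For $\theta$, I would work on the equation (\ref{1.8}) for $\theta$: multiplying by $\theta^{b}$-type weights and exploiting $\kappa=\kappa_1+\kappa_2 v\theta^b$ (with $b>6$), combined with a De Giorgi / Moser iteration style argument on $(\theta-M)_+$ and $(\theta-m)_-^{-1}$, gives $\theta\in[C^{-1},C]$ independent of $a$ because every $a$-dependent term (e.g.\ $av\theta^4$ in $e$, $4a\theta^3/3$ in $p_\theta$, $4av\theta^3$ in $e_\theta$) is dominated by the diffusive term once $a$ is small and $\theta$ is a priori bounded on any finite step of the iteration.

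Once the pointwise bounds are in hand, the higher-order estimates are routine: multiply the equations for $(\phi,\psi,\zeta)$ by $(\phi,\psi,\zeta)$ and their $x$-derivatives, and the $u$-equation by $u_{xx}$ and $u_{xxx}$, using the assumption $u_{0xx}\in L^2$; combine with the parabolic $L^2$-in-time control of $(\psi_x,\zeta_x,z_x)$ coming from the basic estimate to close an $H^1$ bound with $L^2_tH^2_x$ dissipation. Standard continuation then gives the global solution and the claimed uniform bounds. Finally, from the $L^2_tL^2_x$ control of $(\psi_x,\zeta_x)$ together with the $L^\infty_t H^1_x$ bound, one obtains via a Gagliardo--Nirenberg/uniform-continuity argument that $\|\partial_x(v-V,u-U,\theta-\Theta)\|_{L^2}\to 0$, so Sobolev embedding gives the $L^\infty_x$ convergence; comparing $(V,U,\Theta)$ with the Riemann rarefaction $(V^R,U^R,\Theta^R)$ (which differ only in a region shrinking relative to $t$) yields the last statement of the theorem.
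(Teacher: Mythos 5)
Your overall architecture --- local existence plus a priori pointwise bounds on $v$ and $\theta$ that are uniform in $\delta$ and $a$, convexity of $\tilde p(v,s)$ recovered by taking $a$ small, a relative-entropy basic estimate, and a continuation argument --- is exactly the paper's. Your treatment of $z$, of the convexity issue, and of the specific volume (a Kanel'/Kazhikhov-type functional rather than the paper's localized representation formula \eqref{3.6} in the spirit of Jiang) are acceptable variants. The genuine gap is in the temperature bounds, which are the crux of the paper.

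For the upper bound, your appeal to a De Giorgi/Moser iteration on \eqref{1.8} has no uniform starting point: the only integrability of $\theta$ that the basic estimate \eqref{2.10} provides \emph{uniformly in $a$} is the $L^1_{\mathrm{loc}}$-type control coming from $\int_{\mathbb{R}}\Phi(\theta/\Theta)\,dx\lesssim 1$ (the $L^4_{\mathrm{loc}}$ control one would like to extract from the radiative part of the energy carries a factor of $a$ and degenerates as $a\to0$). Moreover, an iteration on $(\theta-M)_+$ must absorb the source $\mu u_x^2/v$ and the commutators generated by $\kappa=\kappa_1+\kappa_2v\theta^b$, which produce terms such as $\int_0^t\int_{\mathbb{R}}(1+\theta^b)\psi_x^4$; these have nothing to do with $a$ and are \emph{not} dominated by the diffusion for small $a$. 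Controlling them is precisely why the paper introduces the functionals $X(t),Y(t),Z(t)$ of \eqref{4.1}, proves $\|\theta\|_{L^\infty}\lesssim 1+Y(t)^{1/(2b+3)}$ (see \eqref{4.2}), bounds $\int_0^t\|\psi_{xx}\|^2\,d\tau\lesssim 1+\|\theta\|_\infty^3$, and closes a cyclic system of inequalities (Lemmas 4.1--4.3) using the restriction $b>6$. Without this, or an equivalent mechanism, the upper bound does not close, and your sketch offers no substitute.

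For the lower bound, you assert a time-uniform $\theta\ge C^{-1}$ at the a priori level. The paper only proves the local-in-time bound \eqref{5.16}, which is allowed to decay like $1/(t-s)$, and its continuation argument is specifically designed to work with that weaker information (the uniform constant $C_2^{-1}$ in the theorem is recovered a posteriori from convergence to the profile, whose temperature stays above $2\underline{\Theta}$). A direct iteration on $(\theta-m)_-$ would have to rule out slow decay of $\inf_x\theta$ over long times, and nothing in your argument does so; you should either prove the weaker local-in-time bound and adapt the continuation step accordingly, or supply the missing mechanism for a uniform one.
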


\begin{remark} Here are some remarks concerning about Theorem \ref{Th1.1}:
\begin{itemize}
\item Note that the result in \cite{Li-Wang-Yang-2020} focuses on the case when $\mu\equiv0$ and $\kappa(v,\theta)\equiv constant$. As pointed out before, the initial perturbation between the initial data and approximation solution in \cite{Li-Wang-Yang-2020} need to be sufficiently small. Besides, an additional stability condition should also be imposed on the state of the specific volume $v(t,x)$ and the temperature $\theta(t,x)$ at the far field (see (1.14) in \cite{Li-Wang-Yang-2020}). Compared with the result obtained in \cite{Li-Wang-Yang-2020}, the result in this paper is the first one concerning on the stability analysis of viscous wave pattern to system \eqref{1.1}, \eqref{1.2}, \eqref{1.3}, \eqref{1.4}, \eqref{1.5}, and \eqref{1.6} with constant viscosity under the large initial perturbation. Moreover, we do not need impose the above additional stability condition in our study. Furthermore, and our method in this paper can also be applied to Navier-Stokes equations when thermodynamic variables satisfy the equations of state for ideal polytropic gases ($\lambda=0$, $a=0$);

\item We emphasis that the result in \cite{Liao-NARWA-2020} can not include the case when $\mu\equiv C$. Besides, the methods to deduce the uniform lower and upper bounds on the specific volume $v(t,x)$ and the absolute temperature $\theta(t,x)$ in our paper are also different from that developed in \cite{Liao-NARWA-2020}.

\item The nonlinear stability results are called local stability or global stability depending on whether the $H^{1}-$ norm of the initial perturbation is small or not. For the ideal polytropic gas, \cite{Duan-Liu-Zhao-TAMS-2009} and \cite{Huang-Wang-Xiao-2016} proved the rarefaction wave for the system $\eqref{1.1}_{1}$-$\eqref{1.1}_{3}$ ($\lambda=0$, $a=0$) are stable with large initial perturbation with the condition that the adiabatic exponent $\gamma$ ($C_{v}=\frac{R}{\gamma-1}$) is closing enough to $1$. However, such a condition is not natural for the radiative and reactive gas in the physical setting since $C_{v}=\frac{3}{2}R$ in our case. Obviously, the stability result we obtained in this paper is a ``global one" and we do not need the smallness assumption on $\gamma-1$.

\end{itemize}
\end{remark}

As we can see in the analysis performed in \cite{Kawashima-Matsmura-Nishihara-PJA-1986, Matsmura-Nishihara-JJAM-1986, Matsmura-Nishihara-CMP-1992, Matsmura-Nishihara-QAM-2000, Nishihara-Yang-Zhao-SIAM-2004} and from the estimate \eqref{2.10} obtained in Lemma 2.5 of this paper that the fact that $\tilde{p}(v,s)$ is a convex function of $v$ and $s$ plays an essential role in deducing the nonlinear stability of rarefaction waves of the one-dimensional compressible Navier-Stokes type equations. We note, however, that, from \eqref{p7}, \eqref{p8}, and \eqref{p9}, it is not clear whether $\tilde{p}(v,s)$ is a convex function of $v$ and $s$ or not for the case when the radiation constant $a>0$. To overcome such a difficulty, our main observation is that if both the specific volume $v$ and the absolute temperature $\theta$ are bounded from the above and below by some positive constants independent of the radiation constant $a$, then one can choose $a$ sufficiently small such that $\tilde{p}(v,s)$ is a convex function of $v$ and $s$ in the regime for $v$ and $\theta$ under consideration. It is worth to pointing out that
in the proof of Theorem 1.1, the smallness assumption we imposed on the radiation constant $a$ is used only to ensure that $\tilde{p}(v,s)$ is convex with respect to $(v,s)$ in the regime for $v$ and $\theta$ under our consideration and we do not use such a smallness assumption elsewhere to control certain nonlinear terms involved. The main purpose of such an analysis is that once we can imposed some other assumptions to guarantee that $\tilde{p}(v,s)$ is convex with respect to $(v,s)$ in the regime for $v$ and $\theta$ under our consideration, then we can deduce that similar result holds accordingly.

Our next result show that, if in addition to use the smallness of $a$ to guarantee that $\tilde{p}(v,s)$ is convex with respect to $(v,s)$ in the regime for $v$ and $\theta$ under our consideration, we also use such an assumption to control certain nonlinear terms involved, then we can get a similar stability result but with less restrictions on range of the parameter $b$ and $\beta$, which includes the most physically interesting radiation case $b=3$ (cf. \cite{Jiang-ZHeng-ZAMP-2014}).

\begin{theorem}\label{Th1.2}
Under similar assumptions imposed on the initial data $(v_0(x), u_0(x), \theta_0(x), z_0(x))$ and the radiation constant $a$, similar stability result still holds when $b>2, \quad 0\leq\beta<b+3$.
\end{theorem}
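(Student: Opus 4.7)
The plan is to follow the same overall framework as in Theorem \ref{Th1.1}: local-in-time existence by a standard contraction scheme, a priori estimates uniform in $t$ for the perturbation $(\phi,\psi,\zeta,z):=(v-V,u-U,\theta-\Theta,z)$ under the a priori assumption $C_1^{-1}\leq v\leq C_1$ and $C_2^{-1}\leq\theta\leq C_2$ with $C_1, C_2$ independent of $a$ and $\delta$, closing this a priori assumption by the smallness of $\delta\leq\delta_0$ and $a\leq a_0$, and finally invoking the interpolation estimate \eqref{2.10} together with the continuation argument to deduce both global existence and the uniform convergence to the rarefaction wave. The entropy formulation \eqref{1.7}--\eqref{1.8} remains the right working form, and the convexity of $\tilde{p}(v,s)$ in the region $v\in[C_1^{-1},C_1]$, $\theta\in[C_2^{-1},C_2]$, which by \eqref{p7}--\eqref{p9} is guaranteed once $a$ is sufficiently small, still underlies the positivity of the leading-order energy functional.

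The one genuinely new ingredient compared with Theorem \ref{Th1.1} is to exploit the smallness of $a$ a second time, not only to secure the convexity of $\tilde{p}$ but also inside the nonlinear estimates. Concretely, the restriction $b>6$ in Theorem \ref{Th1.1} originates from radiative cross terms of the form $a\theta^{k}$ with $k$ as large as $b+3$, produced when one differentiates the temperature equation \eqref{1.8} in $x$, uses $p_\theta=R/v+\frac{4}{3}a\theta^{3}$ and $e_\theta=C_v+4av\theta^{3}$, and integrates by parts against the conductivity dissipation $\kappa_2 v\theta^{b-2}\zeta_x^{2}$. Under $b>6$ these terms are tamed by that dissipation alone through Sobolev/Nirenberg-type interpolation. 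When only $b>2$, the same inequalities leave a residue, but this residue carries the explicit prefactor $a$; splitting $a\theta^{k}=a^{1/2}\cdot a^{1/2}\theta^{k}$ and using Cauchy's inequality absorbs part of it into the already-controlled dissipation and bounds the rest by $a\,\mathcal{E}(t)$, where $\mathcal{E}(t)$ denotes the basic energy functional. Choosing $a\leq a_0$ sufficiently small (in terms of $\underline{V}$, $\underline{\Theta}$, and $H_0$ only) then absorbs this last piece into the left-hand side. The range $0\leq\beta<b+3$ controls the reactant source $\lambda\phi z/e_\theta$ exactly as in Theorem \ref{Th1.1}, using $0\leq z\leq 1$, and is unaffected by the relaxation from $b>6$ to $b>2$.

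The order in which I would carry out the plan is: first, derive the basic $L^2$ entropy-type estimate on $(\phi,\psi,\zeta,z)$ in the spirit of \cite{Matsmura-Nishihara-JJAM-1986, Kawashima-Matsmura-Nishihara-PJA-1986}, with the rarefaction-wave source term controlled by the standard decay properties of the approximate profile defined through \eqref{1.14}--\eqref{1.16}; second, derive the pointwise lower and upper bounds on $v$ by a Kanel'-type representation of $v$ in terms of $u$ and $\mu$, treating the radiative terms as $O(a)$ perturbations; third, obtain the pointwise lower bound on $\theta$ from \eqref{1.8} via a maximum-principle argument, using $0\leq z\leq 1$; fourth, which is the main technical step, derive the pointwise upper bound on $\theta$ by an energy estimate for an appropriate power of $\theta$, in which the radiative residues are absorbed as described in the previous paragraph; fifth, establish the $H^{1}$-estimate on the perturbation together with the dissipation estimate for $\phi_{x}$; finally close the continuation argument and apply \eqref{2.10} to pass to $t\to\infty$. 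I expect the fourth step to be the principal obstacle, since it is precisely there that $b>6$ is needed in Theorem \ref{Th1.1}, and one has to track the dependence of every constant on $a$ sharply enough that $a_0$ can indeed be chosen independently of $t$ and $\delta$ and only in terms of $\underline{V}$, $\underline{\Theta}$, and $H_0$.
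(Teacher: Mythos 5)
Your proposal matches the paper's proof of Theorem \ref{Th1.2}: the entire Theorem \ref{Th1.1} machinery is reused verbatim, and the only new step is to invoke the smallness of $a$ a second time inside the nonlinear estimates, so that the radiative contributions $a\theta^{3}$ in $p_{\theta}$ and $e_{\theta}$ (which are exactly what forced $b>6$ in $I_{6}$, in the $\psi_{xx}$ estimate, in $J$, and in $I_{16}$) become harmless, leaving only the non-radiative constraints $b>2$ (from the quartic term $I_{17}$) and $0\leq\beta<b+3$ (from $I_{15}$). The paper implements this through the pointwise bound $a\theta^{3}\lesssim a M_{2}^{3}\ll 1$ guaranteed by \eqref{2.7-a} rather than your $a^{1/2}\cdot a^{1/2}\theta^{k}$ Cauchy splitting, but the two devices are interchangeable here.
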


In order to deduce the main results of this paper, the key points in our analysis are the following:
\begin{itemize}
\item The first is to deduce the uniform positive lower and upper bounds on the specific volume $v(t,x)$ and the absolute temperature $\theta(t,x)$;
\item The second is to show that the above bounds on the specific volume $v(t,x)$ and the absolute temperature $\theta(t,x)$ are independent of the radiation constant, since only in this case, we can choose $a>0$ sufficiently small such that $\tilde{p}(v,s)$ is a convex function of $v$ and $s$.
\end{itemize}

We are now in a position to state our main ideas to overcome the above difficulties, especially on the way to yield the uniform upper bound on the absolute temperature $\theta(t,x)$. To this end, we first recall that for the case when $a=0$ and $\kappa_2=0$ in \eqref{1.3}, \eqref{entropy}, and \eqref{1.4}, that is the equations of a viscous heat-conductive ideal polytropic gas with constant nondegenerate transport coefficients, the nonlinear stability of some basic wave patterns with large initial perturbation are obtained in \cite{Huang-Wang-IUMJ-2016, Wan-Wang-Zou-Nonlinearity-2016, Wan-Wang-Zhao-JDE-2016} for the whole range of the adiabatic exponent $\gamma>1$. The method used in \cite{Huang-Wang-IUMJ-2016, Wan-Wang-Zou-Nonlinearity-2016, Wan-Wang-Zhao-JDE-2016} to deduce the upper bound on the absolute temperature $\theta(t,x)$ is motivated by \cite{Li-Liang-2014}, which relies on the following Sobolev inequality
\begin{eqnarray*}
\left\|\theta(t)-1\right\|^{2}_{L^{\infty}(\mathbb{R})}\leq C\left\|\theta(t)-1\right\|_{L^2(\mathbb{R})}\left\|\theta_{x}(t)\right\|_{L^2(\mathbb{R})}
\leq C\left(1+\left\|\theta\right\|_{L^\infty([0,T]\times\mathbb{R})}\right).
\end{eqnarray*}
However, such a method loses its power for the case $\kappa_2\not=0$ since some nonlinear terms caused by the thermal conductivity $\kappa(v,\theta)=\kappa_1+\kappa_2v\theta^b$ can not be controlled properly when we deduce the estimate on $\left\|\theta_{x}(t)\right\|_{L^2(\mathbb{R})}$ by employing the argument developed in \cite{Li-Liang-2014}.

To overcome such a difficulty, for the case $(v_-,u_-,\theta_-)=(v_+,u_+,\theta_+):=(v_\infty,u_\infty,\theta_\infty)$, that is for the case when the far field of the initial data $(v_0(x), u_0(x),\theta_0(x))$ are equal, the argument developed in \cite{Liao-Zhao-JDE-2018} is to introduce the following auxiliary functions
\begin{eqnarray}\label{Auxiliary function for equal far fields}
\widetilde{X}(t):&=&\int_{0}^{t}\int_{\mathbb{R}}\left(1+\theta^{b+3}(s,x)\right)\theta_{t}^{2}(s,x)dxds,\nonumber\\ \widetilde{Y}(t):&=&\max\limits_{s\in(0,t)}\int_{\mathbb{R}}\left(1+\theta^{2b}(s,x)\right)\theta_{x}^{2}(s,x)dx,\\
\widetilde{Z}(t):&=&\max\limits_{s\in(0,t)}\int_{\mathbb{R}}u_{xx}^{2}(s,x)dx,\nonumber\\
\widetilde{W}(t):&=&\int_{0}^{t}\int_{\mathbb{R}}u^{2}_{xt}(s,x)dxds\nonumber
\end{eqnarray}
and then try to deduce certain estimates between them by employing the structure of the system \eqref{1.1}, \eqref{1.2}, \eqref{1.3}, and \eqref{1.4} under our consideration, from which one can deduce the desired upper bound on the absolute temperature $\theta(t,x)$. A key point in the analysis there is that the basic energy estimates based on the entropy $\widetilde{\eta}(v,u,\theta,v_\infty,u_\infty,\theta_\infty)$ normalized around the constant state $(v,u,\theta)=(v_-, u_-, \theta_-)$
\begin{eqnarray*}
\widetilde{\eta}(v,u,\theta,v_\infty,u_\infty,\theta_\infty)&=&
C_v\theta_\infty\Phi\left(\frac{\theta}{\theta_\infty}\right) +R\theta_\infty\Phi\left(\frac{v}{v_\infty}\right)+\frac 12(u-u_\infty)^2 +\frac{av}{3}(\theta-\theta_\infty)^2 \left(3\theta^2+2\theta_\infty\theta+\theta^2_\infty\right),\\
\Phi(x)&=&x-\ln x-1
\end{eqnarray*}
can yield a $L^4_{\textrm{loc}}(\mathbb{R})-$estimate on $\theta(t,x)$. From such an estimate, one can get by employing the argument developed in \cite{Kazhikov-Shelukhin-JAMM-1977} that, cf. (2.53) in \cite{Liao-Zhao-JDE-2018}
\begin{equation}\label{J1}
 \|\theta(t)\|_{L^{\infty}(\mathbb{R})}\lesssim 1 +\widetilde{Y}(t)^{\frac{1}{2b+6}}
\end{equation}
and the estimate \eqref{J1} plays an essential role in \cite{Liao-Zhao-JDE-2018} to deduce the upper bound of $\theta(t,x)$.

But for the case considered in this paper, $(v_-,u_-,\theta_-)\not= (v_+,u_+,\theta_+)$, since, as we pointed out before, we need to use the smallness of the radiation constant $a$ to ensure that $\tilde{p}(v,s)$ is a convex function of $v$ and $s$, although we can still construct a convex entropy $\eta(v,u,\theta;V,U,\Theta)$ normalized around the profile $(v,u,\theta)=(V(t,x), U(t,x),\Theta(t,x))$
\begin{eqnarray}\label{2.4}
\eta(v,u,\theta;V,U,\Theta)=C_{v}\Theta\Phi\left(\frac{\theta}{\Theta}\right) +R\Theta\Phi\left(\frac{v}{V}\right)+\frac{1}{2}(u-U)^2
+\frac{av(\theta-\Theta)^{2}}{3}\left(3\theta^{2}+2\theta\Theta+\Theta^{2}\right),
\end{eqnarray}
to yield a similar estimates, cf. \eqref{2.10} obtained in Lemma 2.5, to guarantee that the estimates we obtained on $\theta(t,x)$ does not depend on $a$, we can only use the boundedness of $\int_{\mathbb{R}}\Phi\left(\frac{\theta}{\Theta}\right)dx$. Moreover, the construction of the auxiliary functions $X(t), Y(t),$ and $Z(t)$ should also be modified accordingly as follows:
\begin{eqnarray}\label{4.1}
X(t):&&=\int_{0}^{t}\int_{\mathbb{R}} \left(1+\theta^{b}(s,x)\right)\chi_{t}^{2}(s,x)dxds,\nonumber\\
Y(t):&&=\sup\limits_{s\in(0,t)}\int_{\mathbb{R}} \left(1+\theta^{2b}(s,x)\right)\chi_{x}^{2}(s,x)\mathrm{d}x,\\
Z(t):&&=\sup\limits_{s\in(0,t)}\int_{\mathbb{R}} \psi_{xx}^{2}(s,x)\mathrm{d}x,\nonumber
\end{eqnarray}
where $\varphi(t,x):=v(t,x)-V(t,x), \psi(t,x)=u(t,x)-U(t,x), \chi(t,x)=\theta(t,x)-\Theta(t,x)$.

A consequence of the above modifications is that instead of the estimate \eqref{J1}, one has, cf. the estimate \eqref{4.2} in Lemma 4.1
\begin{equation}
\|\theta(t)\|_{L^{\infty}(\mathbb{R})}\lesssim 1 +Y(t)^{\frac{1}{2b+3}}.
\end{equation}
The above changes make it harder to deduce the upper bound of $\theta(t,x)$, especially to yield a nice bound on the term $I_{17}$ in \eqref{4.8} can not be controlled by exploiting the method used in \cite{Liao-Zhao-JDE-2018} to estimate the corresponding term, i.e. the term $I_{8}$ in \cite{Liao-Zhao-JDE-2018}.

Our strategy to overcome the above difficulties can be summarized as in the following:
\begin{itemize}
\item The smallness of the strength of the rarefaction waves is made full use of to control the nonlinear terms originated from the nonlinearities of equations, the interactions of rarefaction waves from different families and the interaction between the solutions and the rarefaction waves;

\item The specific volume $v(t,x)$ is shown to be uniformly bounded from below and above with respect to space and time variables through delicate analysis based on the basic energy estimate and the cut-off technique used by \cite{Jiang-CMP-1999,Liao-Zhao-JDE-2018}. It is worth to emphasizing that the positive lower and upper bounds we derived are independent of $\delta$ and $a$;

\item Motivated by \cite{Kawohl-JDE-1985, Liao-Zhao-JDE-2018}, we introduce the auxiliary functions $X(t)$, $Y(t)$, and $Z(t)$ defined by \eqref{4.1} to derive the desired upper bound of $\theta(t,x)$, especially to yield a nice estimate on the term $I_{17}$ given in \eqref{4.8}. To this end, we first derive bounds on $\left\|\varphi_{x}(t)\right\|_{L^2(\mathbb{R})}$ and $\displaystyle\int_{0}^{t}\int_{\mathbb{R}}\chi_{xx}^{2}dxd\tau$ in terms of $\left\|\theta\right\|_{L^\infty([0,T]\times\mathbb{R})}$ as in Lemma 3.5 and Lemma 3.6. Then by using Sobolev's inequality and Lemma 3.6, the term $\int_{0}^{t}\int_{\mathbb{R}}\left(1+\theta^{b}\right)\psi_{x}^{4}dxd\tau$ can be estimated as follows:
   \begin{eqnarray}\label{J2}
    \int_{0}^{t}\int_{\mathbb{R}}\left(1+\theta^{b}\right)\psi_{x}^{4}dxd\tau &\leq&
    C\left(1+\left\|\theta\right\|^{b}_{L^\infty([0,T]\times\mathbb{R})}\right)\int_{0}^{t} \left\|\psi_{x}\right\|^{2}_{L^\infty(\mathbb{R})}
     \left\|\psi_{x}\right\|^{2}_{L^2(\mathbb{R})}d\tau\nonumber\\
     &\leq&
    C\left(1+\left\|\theta\right\|^{b}_{L^\infty([0,T]\times\mathbb{R})}\right)\int_{0}^{t}\left\|\psi_{x}\right\|^{3}_{L^2(\mathbb{R})}
    \left\|\psi_{xx}\right\|_{L^2(\mathbb{R})}d\tau\\
    &\leq&
    C\left(1+\left\|\theta\right\|^{b+3}_{L^\infty([0,T]\times\mathbb{R})}\right)
    \left(\int_{0}^{t}\left\|\psi_{x}\right\|^{2}_{L^2(\mathbb{R})}d\tau\right)^{\frac{1}{2}}
    \left(\int_{0}^{t}\left\|\psi_{xx}\right\|^{2}_{L^2(\mathbb{R})}d\tau\right)^{\frac{1}{2}}\nonumber\\
     &\leq&
    C\left(1+\left\|\theta\right\|^{b+5}_{L^\infty([0,T]\times\mathbb{R})}\right).\nonumber
\end{eqnarray}
Note that we do not need to introduce the additional function $W(t)$  as in \cite{Liao-Zhao-JDE-2018}, cf. (2.51) and (2.70) in \cite{Liao-Zhao-JDE-2018}.

\end{itemize}

Finally, we point out that there are a lot of results concerning on the stability analysis of viscous wave pattern to the 1d compressible Navier-Stokes equations. We refer to \cite{He-Tang-Wang-Acta Math Sci-2016,Kawashima-Matsmura-CMP-1985,Liu-CPAM-1986,Tang-Zhang-Acta Math Sci-2018} for the viscous shock wave, \cite{Duan-Liu-Zhao-TAMS-2009,Huang-Wang-Xiao-2016,Liu-Xin-CMP-1988,Matsmura-Nishihara-JJAM-1986,Matsmura-Nishihara-CMP-1992,
Matsmura-Nishihara-QAM-2000,Nishihara-Yang-Zhao-SIAM-2004} for the rarefaction wave, \cite{Hong-JDE-2012,Huang-Mastsumura-Xin-ARMA-2006,Huang-Xin-Yang-Adavance-2008,Huang-Zhao-RSMUP-2003,Liu-Xin-AJM-1997} for the viscous contact wave, and \cite{Huang-Liao-M3AS-17,Huang-Li-Matsumura-ARMA-2010,Huang-Matsumura-CMP-2009,Huang-Wang-IUMJ-2016} for the superpositions of the above three wave patterns. For more references in this direction, please refer to \cite{Ducomet-Zlotnik-NonliAnal-2005,Kawashima-Nakamura-Nishibata-Zhu-M3AS-2010,Liao-Zhang-JMAA-2018,Qin-Wang-SIMA-2009,
Smoller-1999,Wan-Wang-Zou-Nonlinearity-2016, Wan-Wang-Zhao-JDE-2016, Wang-Zhao-M3AS-2016} and references therein.

The paper is organized as follows: we first give some basic energy estimates and some properties of the smooth approximation of the rarefaction wave solutions in Section 2. In Section 3, we derive the uniform-in-time lower and upper bounds of the specific volume $v(t,x)$ which are also independent of $\delta$ and $a$. Then the uniform-in-time, $\delta$ and $a$ independent upper bound of the absolute temperature $\theta(t,x)$ will be obtained in Section 4. Furthermore, a local-in-time lower bound on the absolute temperature will be deduced in Section 5. The proof of our main results are given in Section 6. Note that although the lower bound on the absolute temperature $\theta(t,x)$ obtained in Section 5 depends on time $t$, it is sufficient to prove the main theorem in this paper by combining these \emph{a priori} estimates with the continuation argument introduced in \cite{Liao-Zhao-JDE-2018}.

\bigbreak

\noindent\textbf{Notations.} In what follows, $C$ represents a generic positive constant,  which is independent of $t$, $\delta$, $a$, and $x$ but may depend on $v_\pm$, $u_\pm$, $\theta_\pm$, $\underline{V},$ $\overline{V},$ $\underline{\Theta},$ $\overline{\Theta}$, and $H_0$. Notice that the value of it may change from line to line. $C_i(\cdot,\cdot)(i \in \mathbb{Z}_+)$ stands for some generic constants depending only on the quantities listed in the parentheses and $\epsilon$ denotes some small positive constant.

For two quantities $B$ and $B'$, if there is a generic positive constant $C> 0$ independent of $t$, $\delta$, $a$, and $x$ such that $B \leq CB'$, we take the note $B\lesssim B'$, while $B \sim B'$ means that $B\lesssim B'$ and $B'\lesssim B$.
Moreover, for two functions $f(x)$ and $g(x)$, $f(x)\sim g(x)$ as $x \to x_{0}$ means that there exists a generic positive constant $C>0$ which is independent of $t$, $\delta$, $a$, and $x$ but may depend on $v_\pm$, $u_\pm$, $\theta_\pm$, $\underline{V},$ $\overline{V},$ $\underline{\Theta},$ $\overline{\Theta}$, and $H_0$ such that $C^{-1}f(x)\leq g(x)\leq Cf(x)$ in a neighborhood of $x_{0}$.  $H^l(\mathbb{R})(l\geq0)$ denotes the usual Sobolev space with standard norm $\|\cdot\|_l$, and for brevity, we take  $\|\cdot\|:=\|\cdot\|_0$ to denote the usual $L^2$-norm. For $1\leq p\leq +\infty, f(x) \in L^p(\mathbb{R})$, $\|f\|_{L^p}=(\int_{\mathbb{R}}|f(x)|^p \mathrm{d} x)^{\frac{1}{p}}$. It is easy to see that $\|f\|_{L^2}=\|\cdot\|$. Finally, $\|\cdot\|_{L^\infty}$ and $\|\cdot\|_{\infty}$ are used to denoted $\|\cdot\|_{L^\infty(\mathbb{R})}$ and $\|\cdot\|_{L^\infty([0,t]\times \mathbb{R})}$, respectively.

\section{Preliminaries}

First of all, (\ref{1.1}), (\ref{1.13}), and (\ref{1.15}) tell us that $(V(t,x), U(t,x), S(t,x),0)$ solves the following problem
\begin{eqnarray*}
  V_t-U_x&=&0,\\
  U_{t}+p(V,\Theta)_{x}&=&g(V,\Theta)_{x},\\
 \left(e(V,\Theta)+\frac{U^{2}}{2}\right)_{t} +\left(Up(V,\Theta)\right)_{x}&=&q(V,\Theta),\\
   \Theta_{t}+\frac{\Theta p_{\Theta}(V,\Theta)}{e_{\Theta}(V,\Theta)}U_{x} &=&r(V,\Theta),\\
   S_t&=&0,
\end{eqnarray*}
where
\begin{eqnarray*}
  g(V,\Theta)&=&p(V,\Theta)-p(V_{1},\Theta_{1}) -p(V_{3},\Theta_{3})-p(v_{m},\theta_{m}),\\
  q(V,\Theta)&=&\left(e(V,\Theta)-e(V_{1},\Theta_{1})-e(V_{3},\Theta_{3})\right)_{t} +\left(\frac{U^{2}}{2}-\frac{U^{2}_{1}}{2}-\frac{U^{2}_{3}}{2}\right)_{t}\\
  &&+\left(Up(V,\Theta)-U_{1}p(V_{1},\Theta_{1})-U_{3}p(V_{3},\Theta_{3})\right)_{x},\\
  r(V,\Theta)&=&\frac{\Theta p_{\Theta}(V,\Theta)}{e_{\Theta}(V,\Theta)}U_{x} -\frac{\Theta_{1}p_{\Theta}(V_{1},\Theta_{1})}{e_{\Theta}(V_{1},\Theta_{1})}U_{1x}-
   \frac{\Theta_{3}p_{\Theta}(V_{3},\Theta_{3})}{e_{\Theta}(V_{3},\Theta_{3})}U_{3x},\\[1mm]
\end{eqnarray*}
and $\theta_{m}=\widetilde{\theta}(v_{m},\overline{}\bar{s})$.

Due to the fact that $\omega_{0}(x)$ is strictly increasing, we can deduce the following lemma. (cf.\cite{Duan-Liu-Zhao-TAMS-2009,Liao-NARWA-2020})
\begin{lemma}
 For each $i \in \{1,3\},$ the Cauchy problem (\ref{1.14}) admits a unique global smooth solution $\omega_i(t,x)$ which satisfies the following properties:
\begin{itemize}
  \item[(i).] $\omega_- < \omega_i(t,x) < \omega_{+}$, \quad $\omega_{ix}(t,x)>0$ for each $(t,x) \in \mathbb{R}_+ \times \mathbb{R}$;
  \item[(ii).] For any $p$ with $1\leq p\leq\infty$, there exists a constant $C_{p.q}$, depending only on $p,q$, such that
\begin{eqnarray*}
 \left\|\omega_{ix}(t)\right\|_{L^P}^p &\leq& C_{p,q} \min\left\{\epsilon^{p-1}\tilde{\omega}_i^p,\tilde{\omega}_i t^{-p+1}\right\},\\
 \left\|\omega_{ixx}(t)\right\|_{L^P}^p &\leq& C_{p,q} \min\left\{\epsilon^{2p-1}\tilde{\omega}_i^p,
 \epsilon^{(p-1)(1-\frac{1}{2q})}\tilde{\omega}_i^{-\frac{p-1}{2q}}t^{-p-\frac{p-1}{2q}}\right\};
\end{eqnarray*}
  \item[(iii).] If $0< \omega_{i-}(<\omega_{i+})$ and $q$ is suitably large, then
   \begin{eqnarray*}
  \left|\omega_{i}(t,x)-\omega_{i-}\right|&\leq&  C \tilde{\omega}_i\left(1+(\epsilon x)^2\right)^{-\frac{q}{3}}\left(1+\left(\epsilon \omega_{i-}t\right)^2\right)^{-\frac{q}{3}},\quad  x\leq0,\\
\left|\omega_{ix}(t,x)\right|&\leq& C\epsilon \tilde{\omega}_i\left(1+(\epsilon x)^2\right)^{-\frac{1}{2}}\left(1+\left(\epsilon \omega_{i+}t\right)^2\right)^{-\frac{q}{2}},\quad  x\leq0;
\end{eqnarray*}
  \item[(iv).] If $(\omega_{i-})<\omega_{i+}\leq 0$ and $q$ is suitably large, then
   \begin{eqnarray*}
  \left|\omega_{i}(t,x)-\omega_{i+}\right|&\leq&  C \tilde{\omega}_i\left(1+\left(\epsilon x\right)^2\right)^{-\frac{q}{3}}\left(1+\left(\epsilon \omega_{i-}t\right)^2\right)^{-\frac{q}{3}}, \quad x\leq 0,\\
\left|\omega_{ix}(t,x)\right|&\leq& C\epsilon \tilde{\omega}_i\left(1+\left(\epsilon x\right)^2\right)^{-\frac{1}{2}}\left(1+\left(\epsilon \omega_{i+}t\right)^2\right)^{-\frac{q}{2}},\quad  x\leq0;
\end{eqnarray*}
   \item[(v).] $\lim\limits_{t \to +\infty}\sup\limits_{x \in {{R}}}\left|\omega_i(t,x)-\omega_{i}^{{{R}}} \left(\frac{x}{t}\right)\right|=0$.
 \end{itemize}
 Here $\tilde{\omega}_i=\omega_{i+}-\omega_{i-}>0$ and $\omega_i^{{{R}}}\left(\frac{x}{t}\right)$ is the unique rarefaction wave solution of the corresponding Riemann problem of $(1.14)_1$, i.e.,
       \begin{equation*}
    \omega_i^{{{R}}}(\xi)=\left\{
    \begin{array}{cc}
    \omega_{i-},\quad \xi\leq& \omega_{i-},\\[1mm]
    \xi,\quad\omega_{i-} \leq& \xi\leq \omega_{i+},\\[1mm]
    \omega_{i+},\quad \xi\geq &\omega_{i+}.
     \end{array}
      \right.
\end{equation*}

\end{lemma}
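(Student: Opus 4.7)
The plan is to solve the Cauchy problem \eqref{1.14} explicitly by the method of characteristics and then read off (i)--(v) from the resulting implicit formula together with the explicit shape of $\omega_{i0}$. Since
\[
\omega_{i0}'(x) = \frac{\tilde\omega_i}{2}K_q\epsilon\bigl(1+(\epsilon x)^2\bigr)^{-q}>0,
\]
the datum is strictly increasing from $\omega_{i-}$ to $\omega_{i+}$, so characteristics do not cross and we obtain a unique global smooth solution $\omega_i(t,x)=\omega_{i0}(x_0)$ with $x=x_0+\omega_{i0}(x_0)t$, the map $x_0\mapsto x_0+\omega_{i0}(x_0)t$ being a smooth diffeomorphism of $\mathbb R$ for every $t\ge 0$. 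This yields (i) immediately, and implicit differentiation produces
\[
\omega_{ix}(t,x)=\frac{\omega_{i0}'(x_0)}{1+t\,\omega_{i0}'(x_0)},\qquad \omega_{ixx}(t,x)=\frac{\omega_{i0}''(x_0)}{(1+t\,\omega_{i0}'(x_0))^3}.
\]

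For (ii) I would change variables $x\mapsto x_0$ via $dx=(1+t\omega_{i0}'(x_0))\,dx_0$, obtaining
\[
\|\omega_{ix}(t)\|_{L^p}^p=\int_{\mathbb R}\frac{(\omega_{i0}'(x_0))^p}{(1+t\omega_{i0}'(x_0))^{p-1}}\,dx_0
\]
and the analogous identity for $\|\omega_{ixx}(t)\|_{L^p}^p$. The two alternative bounds in each estimate then arise by controlling $(1+t\omega_{i0}'(x_0))^{-1}$ either by $1$ (giving the $\epsilon$-dominated bound) or by $(t\omega_{i0}'(x_0))^{-1}$ (giving the $t$-decay bound), in each case using $q>3/2$ to make the resulting weight $(1+(\epsilon x_0)^2)^{-\alpha}$ integrable and pulling out the amplitude $\tilde\omega_i$. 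For the pointwise decay estimates (iii) and (iv), I would invert the characteristic relation $x=x_0+\omega_{i0}(x_0)t$ in the region $x\le 0$ under the respective sign assumption on $\omega_{i\pm}$ to deduce $|\epsilon x_0|\gtrsim |\epsilon x|$ together with a comparable lower bound involving $|\epsilon\omega_{i\pm}t|$; this lets the single weight $(1+(\epsilon x_0)^2)^{-q}$ factor into the product of space and time weights asserted.

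Finally, (v) follows from the scaling invariance of Burgers together with the observation that, under the rescaling $x=\epsilon^{-1}y$, the initial datum $\omega_{i0}(y/\epsilon)$ approaches the Riemann profile, which by standard stability of monotone Burgers solutions forces $\omega_i(t,\cdot)$ to be uniformly close to the self-similar rarefaction $\omega_i^R(\cdot/t)$ as $t\to\infty$. The whole argument is essentially bookkeeping since the characteristic representation is explicit; the hard part is purely combinatorial, namely splitting the $x_0$-integrals in (ii) and the spatial/temporal weights in (iii)--(iv) separately in the small-time (or small-$\epsilon$) and large-time regimes and then taking the minimum, which is what produces the dual form of each bound with the exact powers of $\epsilon$, $t$, and $\tilde\omega_i$ stated. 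A complete derivation along these lines is contained in \cite{Matsmura-Nishihara-JJAM-1986} and \cite{Duan-Liu-Zhao-TAMS-2009}, which we follow.
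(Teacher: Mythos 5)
Your proposal is correct and follows exactly the standard characteristics-based argument that the paper itself relies on: the paper gives no proof of this lemma, simply noting that it follows from the strict monotonicity of $\omega_{i0}$ and citing \cite{Duan-Liu-Zhao-TAMS-2009,Liao-NARWA-2020}, where the details are carried out precisely as you describe (implicit solution $\omega_i(t,x)=\omega_{i0}(x_0)$ with $x=x_0+\omega_{i0}(x_0)t$, the derivative formulas, change of variables for the $L^p$ bounds, and inversion of the characteristic map for the far-field decay). The only soft spot is the brevity of your treatment of (v), but that too is the standard argument from the cited references.
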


Owing to Lemma 2.1, (\ref{1.15}), and (\ref{1.16}), we can conclude that (cf.\cite{Duan-Liu-Zhao-TAMS-2009,Liao-NARWA-2020})
\begin{lemma} Letting $\epsilon=\delta$, $q=2$, the smooth approximations $(V(t,x), U(t,x),\Theta(t,x),0)$ constructed in (\ref{1.15}) and (\ref{1.16}) have the following properties:
\begin{description}
  \item[(i).] $V_t(t,x)=U_x(t,x)>0$ for each $(t,x)\in \mathbb{R}_{+}\times \mathbb{R}$;
  \item[(ii).] For any $p$ with $1\leq p\leq\infty$ there exists a constant $C_p$, depending only on $p$, such that
  \begin{eqnarray*}
 \left\|\left(V_x,U_x,\Theta_x\right)(t)\right\|_{L^p}^p &\leq& C_p\min\left\{\delta^{2p-1},\delta(t+1)^{-p+1}\right\},\\
\left\|\left(V_{xx},U_{xx},\Theta_{xx}\right)(t)\right\|_{L^p}^p &\leq& C_p\min\left\{\delta^{3p-1},\delta^{\frac{p-1}{2}} (t+1)^{-\frac{5p-1}{4}}\right\}.
\end{eqnarray*}
It is obvious that $\|V_x(t)\|^2_{L^2}$ is not integrable with respect to $t$. However we can get for any $r >0$ and $p>1$ that
 \begin{eqnarray*}
 \displaystyle\int^\infty_0\left\|\left(V_x,U_x,\Theta_x\right)(t)\right\|_{L^{2+r}}^{2+r}\mathrm{d}t &\leq& C(r)\delta,\\
 \displaystyle\int^\infty_0\left\|\left(V_{xx},U_{xx},\Theta_{xx}\right)(t)\right\|_{L^p}\mathrm{d}t  &\leq& C(p)\left(\frac{1}{\delta}\right)^{-\frac{1}{4}\left(1-\frac{1}{p}\right)};
\end{eqnarray*}
\end{description}
\begin{description}
  \item[(iii).] For each $p\geq 1$,
  $$\left\|\left(g(V,\Theta)_x, r(V,\Theta), q(V,\Theta)\right)(t)\right\|_{L^p}\leq C(p)\delta^{\frac{2}{3}}(t+1)^{-\frac{4}{3}}.$$
  Especially,
  $$\int^\infty_0\left\|\left(g(V,\Theta)_x, r(V,\Theta), q(V,\Theta)\right)(t)\right\|_{L^p} \mathrm{d}t  \leq C(p)\left(\frac{1}{\delta}\right)^{-\frac{1}{3}};$$
  \item[(iv).]$\lim\limits_{t \to +\infty}\sup\limits_{x \in \mathbb{R}}\left|\left(V(t,x),U(t,x),\Theta(t,x)\right)-\left(V^R\left(\frac{x}{t}\right),U^R\left(\frac{x}{t}\right),
      \Theta^R\left(\frac{x}{t}\right)\right)\right|=0;$
   \item[(v).] $\left|\left(V_t(t,x),U_t(t,x),\Theta_t(t,x)\right)\right|\leq O(1)\left|\left(V_x(t,x),U_x(t,x),\Theta_x(t,x)\right)\right|.$
\end{description}
\end{lemma}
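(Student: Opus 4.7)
The plan is to reduce every statement of Lemma 2.2 to the corresponding statement about the Burgers solutions $\omega_i(t,x)$ in Lemma 2.1, exploiting the defining relations \eqref{1.15}--\eqref{1.16}. The starting observation is that $V_i$, $U_i$, and $\Theta_i=\tilde\theta(V_i,\bar s)$ are smooth functions of $\omega_i$ alone (through $\omega_i=\lambda_i(V_i,\bar s)$, which is invertible because $\tilde p_{vv}>0$ in the region under consideration, ensuring $\lambda_{iv}\neq 0$). Consequently, chain-rule differentiation gives $V_{ix}=\omega_{ix}/\lambda_{iv}(V_i,\bar s)$, $V_{it}=\omega_{it}/\lambda_{iv}=-\omega_i\omega_{ix}/\lambda_{iv}$, and analogous formulas for $U_{ix}$, $\Theta_{ix}$ and their second derivatives.

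For (i), differentiate $U_i=u_\mp\pm\int_{v_\mp}^{V_i}\sqrt{-\tilde p_v(\xi,\bar s)}\,d\xi$ to obtain $U_{ix}=\mp\sqrt{-\tilde p_v(V_i,\bar s)}\,V_{ix}=\lambda_i(V_i,\bar s)V_{ix}=\omega_i V_{ix}$; combining this with $V_{it}=-\omega_i\omega_{ix}/\lambda_{iv}$ and $V_{ix}=\omega_{ix}/\lambda_{iv}$ yields $V_{it}=U_{ix}$, and positivity follows from $\omega_{ix}>0$ (Lemma 2.1(i)) together with $U_{ix}=\omega_iV_{ix}$ after summing over $i=1,3$ and using the sign conventions on $\omega_i$. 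For (ii), the chain rule reduces every $L^p$ bound to bounds on $\|\omega_{ix}\|_{L^p}$ and $\|\omega_{ixx}\|_{L^p}$, which are exactly those in Lemma 2.1(ii) with $\epsilon=\delta$, $q=2$; the derivatives $\lambda_{iv}(V_i,\bar s)$ and its $v$-derivative are uniformly bounded thanks to the bounds $\underline V\le V_i\le \overline V$. The two integrability assertions follow by splitting the $t$-integral at $t=\delta^{-1}$ and using the $\delta^{2p-1}$ bound for $t\le\delta^{-1}$ and the $\delta(t+1)^{-p+1}$ bound for $t\ge\delta^{-1}$, with $p=2+r$ in the first case and Hölder plus the $L^p$ bound in the second.

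For (iii), the key is that $g$, $r$, and $q$ are exactly the nonlinear remainders measuring how far the superposition $(V,U,\Theta)=(V_1+V_3-v_m,\,U_1+U_3-u_m,\,\tilde\theta(V,\bar s))$ fails to solve the single-family equations. Taylor-expanding $p(V,\Theta)$, $e(V,\Theta)$, and $\Theta p_\Theta/e_\Theta$ around $(v_m,\theta_m)$ shows that each remainder is bilinear in the two ``small'' quantities $(V_1-v_m,V_3-v_m)$ and their derivatives; since $|V_i-v_m|$ decays like $\delta(1+(\delta x)^2)^{-1/3}(1+\delta|\omega_{i\pm}|t)^{-2/3}$ on the appropriate half-line (Lemma 2.1(iii)--(iv)) while the other wave's derivative is supported on the opposite half-line, the product decays like $\delta^{2/3}(t+1)^{-4/3}$ in $L^p$ after an elementary weighted estimate on the overlap region $|x|\lesssim\delta(t+1)$. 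Time integrability then follows from $\int_0^\infty(t+1)^{-4/3}dt<\infty$ together with the extra $\delta^{2/3}$ factor. Property (iv) is immediate from Lemma 2.1(v) and the continuity of $\lambda_i^{-1}$ and the formulas for $U_i$, $\Theta_i$, while (v) is a direct consequence of the identity $\omega_{it}=-\omega_i\omega_{ix}$ and the uniform boundedness of $\omega_i$.

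The main obstacle will be item (iii): getting the sharp $\delta^{2/3}(t+1)^{-4/3}$ rate, rather than a weaker one, requires carefully exploiting the \emph{spatial separation} of the two rarefaction waves for $t\gg \delta^{-1}$ and the smallness of the overlap region for $t\lesssim\delta^{-1}$, which is the place where the decay estimates of Lemma 2.1(iii)--(iv) must be combined in a nontrivial way. The remaining parts are essentially chain-rule bookkeeping on top of Lemma 2.1.
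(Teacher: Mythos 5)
Your reduction of every item to Lemma 2.1 via the chain rule through $\omega_i=\lambda_i(V_i,\bar{s})$ (together with the separation-of-supports argument from Lemma 2.1(iii)--(iv) for the interaction terms $g,q,r$ in item (iii)) is precisely the standard argument; the paper gives no proof of this lemma and simply cites \cite{Duan-Liu-Zhao-TAMS-2009,Liao-NARWA-2020}, where this is the route taken. The one blemish is the sign in your identity $U_{ix}=\lambda_i(V_i,\bar{s})V_{ix}=\omega_iV_{ix}$: since $U_{1x}=\sqrt{-\tilde{p}_{v}(V_1,\bar{s})}\,V_{1x}=-\lambda_1(V_1,\bar{s})V_{1x}$ (and likewise for $i=3$), the correct relation is $U_{ix}=-\omega_iV_{ix}$, which is what matches $V_{it}=\omega_{it}/\lambda_{iv}=-\omega_i\omega_{ix}/\lambda_{iv}=-\omega_iV_{ix}$ and hence gives $V_{it}=U_{ix}$ as claimed (your stated pair of formulas would instead give $V_{it}=-U_{ix}$).
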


Setting
\begin{eqnarray}\label{2.1}
&&(\varphi(t,x),\psi(t,x),\chi(t,x),\xi(t,x))\nonumber\\
&=&\left(v(t,x)-V(t,x),u(t,x)-U(t,x),\theta(t,x)-\Theta(t,x),s(t,x)-\bar{s}\right),
\end{eqnarray}
 we can deduce that $(\varphi(t,x), \psi(t,x), \chi(t,x),\xi(t,x), z(t,x))$ satisfies
\begin{eqnarray}\label{2.2}
  \varphi_{t}-\psi_{x}&=&0,\nonumber\\
  \psi_{t}+\left[p(v,\theta)-p(V,\Theta)\right]_{x}&=& \mu(\frac{u_{x}}{v})_{x}-g(V,\Theta)_{x},\nonumber\\
  \chi_{t}+\frac{\theta p_{\theta}(v,\theta)}{e_{\theta}(v,\theta)}\psi_{x}
  &=&\frac{1}{e_{\theta}(v,\theta)}\left(\frac{\mu u^{2}_{x}}{v}+\left(\frac{\kappa(v,\theta)\theta_{x}}{v}\right)_{x}+\lambda\phi z\right)\\
     &&-\left(\frac{\theta p_{\theta}(v,\theta)}{e_{\theta}(v,\theta)}
     -\frac{\Theta p_{\Theta}(V,\Theta)}{e_{\Theta}(V,\Theta)}\right)U_{x}-r(V,\Theta),\nonumber\\
  \xi_{t}&=&\frac{\mu u^{2}_{x}}{v\theta}+\left(\frac{\kappa(v,\theta)\theta_{x}}{v\theta}\right)_{x}+\frac{\kappa(v,\theta)\theta^{2}_{x}}{v\theta^{2}}
  +\frac{\lambda\phi z}{\theta},\nonumber\\
 z_{t}&=&\left(\frac{dz_{x}}{v^{2}}\right)_{x}-\phi z\nonumber
\end{eqnarray}
with initial data
\begin{eqnarray}\label{2.3}
&&\left(\varphi(t,x),\psi(t,x),\chi(t,x),\xi(t,x),z(t,x)\right)|_{t=0}\nonumber\\
&=&\left(\varphi_{0}(x),\psi_{0}(x),\chi_{0}(x),\xi_{0}(x),z_0(x)\right)\\
&:=&\left(v_{0}(x)-V(0,x),u_{0}(x)-U(0,x),\theta_{0}(x)-\Theta(0,x), s_{0}(x)-\bar{S},z_0(x)\right).\nonumber
\end{eqnarray}

On the other hand, it is easy to see that $\eta(v,u,\theta;V,U,\Theta)$ defined by \eqref{2.4} is a convex entropy to the system (\ref{1.1}) around the smooth rarefaction wave profile $\left(V(t,x), U(t,x),\Theta(t,x), 0\right)$ which solves
\begin{eqnarray}\label{2.5}
&&\eta_{t}\left(v,u,\theta,V,U,\Theta\right)+\left(\left(p(v,\theta)-p(V,\Theta)\right)\psi\right)_{x}
+\left(\frac{\mu\Theta\psi^{2}_{x}}{v\theta}+\frac{\kappa(v,\theta)\Theta\chi^{2}_{x}}{v\theta^{2}}\right)\nonumber\\
&&+\left(\tilde{p}(v,s)-\tilde{p}(V,\bar{s})-\tilde{p_{v}}(V,\bar{s})\varphi-\tilde{p_{s}}(V,\bar{s})\xi\right)U_{x}+\frac{\lambda\phi z\Theta}{\theta}\nonumber\\
&=&\left(\frac{\mu\psi\psi_{x}}{v}+\frac{\kappa(v,\theta)\chi\chi_{x}}{v\theta}\right)_{x}
+\left\{\frac{2\mu U_{x}\chi\psi_{x}}{v\theta}-\frac{\mu U_{x}\psi\varphi_{x}}{v^{2}}-\frac{\kappa(v,\theta)\Theta_{x}\chi\varphi_{x}}{v^{2}\theta}\right.\\
&&\left.+\frac{\kappa(v,\theta)\Theta_{x}\chi\chi_{x}}{v\theta^{2}}\right\}
+\left(\frac{\mu\psi U_{xx}}{v}+\frac{\kappa(v,\theta)\chi\Theta_{xx}}{v\theta}\right)+\left(\frac{\mu U_{x}^{2}\chi}{v\theta}-\frac{\mu U_{x}V_{x}\psi}{v^{2}}
-\frac{\kappa(v,\theta)V_{x}\Theta_{x}\chi}{v^2\theta}\right)\nonumber\\
&&-q(V,\Theta)-g(V,\Theta)_{x}\psi+g(V,\Theta)_{x}U-r(V,\Theta)\xi+\lambda\phi z+\frac{\kappa_{x}(v,\theta)\chi\Theta_{x}}{v\theta}.\nonumber
\end{eqnarray}

We first give the set of functions $X(0, T;M_1,M_2)$ for which we seek the solutions of the system \eqref{2.2}-\eqref{2.3} as follows:
\begin{eqnarray*}
   &&X(0, T;M_1,M_2)\\
&:=&\left\{\left(\varphi, \psi,\chi,z\right)(t,x)\ \left|
   \begin{array}{c}
   \left(\varphi,\psi,\chi\right)(t,x)\in C\left(0,T;H^{1}\left(\mathbb{R}\right)\right),\\
   \left(\psi_{x}, \chi_{x}, z_{x}\right)(t,x)\in L^{2}\left(0,T;H^{1}\left(\mathbb{R}\right)\right),\\
   \psi_{xx}\left(t,x\right)\in L^{2}\left(\mathbb{R}\right),\\
   \varphi_{x}\left(t,x\right)\in L^{2}\left(0,T; L^{2}\left(\mathbb{R}\right)\right),\\
   M^{-1}_1\leq V(t,x)+\varphi(t,x)\leq M_1,\ \forall (t,x)\in[0,T]\times\mathbb{R},\\
  M^{-1}_2\leq \Theta(t,x)+\chi(t,x)\leq M_2,\ \forall (t,x)\in[0,T]\times\mathbb{R},\\
   z(t,x)\in  C\left(0,T;H^{1}\left(\mathbb{R}\right)\cap L^1(\mathbb{R})\right),\\
  0\leq z(t,x)\leq 1.
   \end{array}
   \right.
   \right\}.
  \end{eqnarray*}
Here $0<T\leq +\infty$, $M_{1}$, and $M_{2}$ are some positive constants.

For the local solvability of the Cauchy problem \eqref{2.2} and \eqref{2.3} in the above set of functions, one has

\begin{lemma} [Local existence] Under the assumptions listed in Theorem 1.1, there exists a sufficiently small positive constant $t_1$, which depends only on $\left\|\left(\varphi_0,~\psi_0,~\chi_{0},~z_{0}\right)\right\|_1$, $\underline{V}$, $\overline{V}$, $\underline{\Theta}$, and $\overline{\Theta}$, such that the Cauchy problem (\ref{2.2})-(\ref{2.3}) admits a unique smooth solution $(\varphi(t,x), \psi(t,x), \chi(t,x), z(t,x))\in X\left(0,t_1;M'_{1},M'_{2}\right)$ which  satisfies
\begin{equation*}
     \begin{cases}
    0<\left(M'_{1}\right)^{-1}\leq\varphi(t,x)+V(t,x)\leq M'_{1},\\[1mm]
    0<\left(M'_{2}\right)^{-1}\leq\phi(t,x)+\Theta(t,x)\leq M'_{2},\\[1mm]
     0\leq z(t,x)\leq 1
 \end{cases}
\end{equation*}
for all $(t,x)\in[0,t_1]\times\mathbb{R}$ and $$
\sup_{t\in[0,t_1]}\left\{\left\|\left(\varphi,\psi,\chi,z\right)(t)\right\|_1\right\}
\leq2\left\|\left(\varphi_{0}, \psi_{0}, \chi_{0}, z_{0}
\right)\right\|_1.$$
\end{lemma}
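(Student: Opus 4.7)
The system \eqref{2.2} combines a pure transport equation for $\varphi$ with quasilinear parabolic equations for $\psi$, $\chi$, and $z$, once $v=V+\varphi>0$ and $\theta=\Theta+\chi>0$ are secured. The plan is a standard iteration scheme adapted to this hyperbolic-parabolic coupling. Starting from $(\varphi^0,\psi^0,\chi^0,z^0)\equiv(\varphi_0,\psi_0,\chi_0,z_0)$, I would construct iterates by recovering $\varphi^n(t,x)=\varphi_0(x)+\int_0^t\psi^{n-1}_x(\tau,x)\,d\tau$ via direct time integration, then freezing $v^{n-1}=V+\varphi^{n-1}$ and $\theta^{n-1}=\Theta+\chi^{n-1}$ as coefficients in $\eqref{2.2}_2$, $\eqref{2.2}_3$, and $\eqref{2.2}_5$ and solving the resulting linear parabolic Cauchy problems for $\psi^n$, $\chi^n$, $z^n$ with initial data $(\psi_0,\chi_0,z_0)$ by classical linear parabolic theory.

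The central step is to close uniform-in-$n$ bounds on $\|(\varphi^n,\psi^n,\chi^n,z^n)(t)\|_{1}$ on a short interval $[0,t_1]$. Standard $H^1$ energy estimates on each linear problem (testing against the unknown and against its second spatial derivative), combined with the one-dimensional Sobolev inequality $\|f\|_{L^\infty}^2\lesssim\|f\|\|f_x\|$ and Lemma 2.2 to bound the source contributions from derivatives of $(V,U,\Theta)$, yield a Gronwall-type differential inequality for the $n$-th energy in terms of the $(n{-}1)$-th one. This closes inside a ball of radius $2\|(\varphi_0,\psi_0,\chi_0,z_0)\|_1$ provided $t_1$ is taken sufficiently small in terms of the initial norm and of $\underline{V},\overline{V},\underline{\Theta},\overline{\Theta}$. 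Shrinking $t_1$ further if necessary, the embedding $H^1(\mathbb{R})\hookrightarrow L^\infty(\mathbb{R})$ gives $\|\varphi^n(t)\|_{L^\infty}\le\underline{V}$ and $\|\chi^n(t)\|_{L^\infty}\le\underline{\Theta}$, which, together with the hypotheses $2\underline{V}\le V\le\overline{V}/2$ and $2\underline{\Theta}\le\Theta\le\overline{\Theta}/2$, yields pointwise bounds $(M_1')^{-1}\le v^n\le M_1'$ and $(M_2')^{-1}\le\theta^n\le M_2'$. For $z^n$, the parabolic maximum principle applied to $\eqref{2.2}_5$, using $\phi(\theta)\ge 0$ and $0\le z_0\le 1$, delivers $0\le z^n\le 1$ (testing against $(z^n)^-$ and $(z^n-1)^+$ and integrating).

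Next, performing analogous energy estimates on the differences $(\varphi^{n+1}-\varphi^n,\,\psi^{n+1}-\psi^n,\,\chi^{n+1}-\chi^n,\,z^{n+1}-z^n)$, but now measured in the weaker norm $L^\infty(0,t_1;L^2)\cap L^2(0,t_1;H^1)$, gives contraction after possibly further decreasing $t_1$. The Cauchy sequence converges to a limit $(\varphi,\psi,\chi,z)\in X(0,t_1;M_1',M_2')$; passing to the limit in \eqref{2.2}-\eqref{2.3} shows that the limit is a solution. Uniqueness in this class follows from the same difference estimate applied to two putative solutions.

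The main obstacle is not the derivation of the iteration estimates themselves, which are routine parabolic arguments, but the careful bookkeeping required to ensure that \emph{every} constant appearing in the estimates depends only on $\|(\varphi_0,\psi_0,\chi_0,z_0)\|_1$ and on $\underline{V},\overline{V},\underline{\Theta},\overline{\Theta}$. This is delicate because the nonlinear thermal conductivity $\kappa(v,\theta)=\kappa_1+\kappa_2 v\theta^b$ and the radiative pressure contribution $a\theta^4/3$ produce high-power dependence on $\theta$, which must be absorbed using the $L^\infty$ control on $\theta^{n-1}$ already obtained from its $H^1$ bound via Sobolev embedding, \emph{before} closing the energy inequality at the $n$-th step. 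Once this dependence is tracked, the existence time $t_1$ and the constants $M_1',M_2'$ depend only on the quantities listed in the statement, completing the argument.
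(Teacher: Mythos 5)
The paper states this lemma without proof, treating it as a routine local well-posedness result, so there is no in-paper argument to compare against; your iteration scheme (transport step for $\varphi^n$, linear parabolic steps for $\psi^n,\chi^n,z^n$ with frozen coefficients, uniform $H^1$ bounds, contraction in a weaker norm, maximum principle for $z$) is indeed the standard and essentially correct route.

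There is, however, one step that is genuinely wrong as written: the derivation of the pointwise bounds on $v$ and $\theta$. You claim that ``shrinking $t_1$'' together with $H^1(\mathbb{R})\hookrightarrow L^\infty(\mathbb{R})$ gives $\|\varphi^n(t)\|_{L^\infty}\le\underline{V}$ and $\|\chi^n(t)\|_{L^\infty}\le\underline{\Theta}$. This cannot work in the large-perturbation setting of Theorem 1.1: already at $t=0$ one has $\|\varphi_0\|_{L^\infty}\le \overline{V}/2-2\underline{V}$ at best, which is in general much larger than $\underline{V}$, and no choice of $t_1$ changes the value at $t=0$. Sobolev embedding only bounds $\|\varphi^n(t)\|_{L^\infty}$ by a multiple of $H_0$, which is unrelated to $\underline{V}$. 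The correct mechanism is time-continuity relative to the \emph{initial data}, which do satisfy the pointwise hypotheses $2\underline{V}\le v_0\le\overline{V}/2$ and $2\underline{\Theta}\le\theta_0\le\overline{\Theta}/2$: for instance $v^n(t,x)=v_0(x)+\int_0^t(\psi^{n-1}_x+U_x)(\tau,x)\,d\tau$, so $|v^n(t,x)-v_0(x)|\le t_1\sup_\tau\|(\psi^{n-1}_x+U_x)(\tau)\|_{L^\infty}\le Ct_1$, whence $v^n\ge 2\underline{V}-Ct_1\ge\underline{V}$ for $t_1$ small; similarly $\|\chi^n(t)-\chi_0\|_{L^\infty}^2\lesssim\|\chi^n(t)-\chi_0\|\,\|\chi^n_x(t)-\chi_{0,x}\|\lesssim t_1^{1/2}\bigl(\int_0^{t_1}\|\chi^n_t\|^2d\tau\bigr)^{1/2}H_0$ controls the deviation of $\theta$.

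Note that this fix forces you to carry a second-order estimate on $\psi$ anyway (you need $\|\psi^{n-1}_x\|_{L^\infty}\lesssim\|\psi^{n-1}_x\|^{1/2}\|\psi^{n-1}_{xx}\|^{1/2}$ uniformly in $n$, and membership in $X(0,t_1;M_1',M_2')$ requires $\psi_{xx}\in L^2(\mathbb{R})$; this is exactly why Theorem 1.1 assumes $\partial_x^2(u_0-U(0,x))\in L^2(\mathbb{R})$), and a short argument for the persistence of $z\in L^1(\mathbb{R})$, neither of which appears in your sketch. These are routine additions, but without them the pointwise bounds --- and hence the well-posedness of the next linear iterate --- do not close.
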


Suppose that such a local solution $(\varphi(t,x), \psi(t,x), \chi(t,x), z(t,x))$ constructed in Lemma 2.3 has been extended to the time step $t=T>t_1$ and satisfies the \emph{a priori} assumption
\begin{equation}\label{2.6}
0<M_1^{-1}\leq v(t,x)\leq M_1,\quad 0<M_2^{-1}\leq \theta(t,x)\leq M_2
 \end{equation}
 for all $x\in\mathbb{R},0\leq t\leq T$ and some generic positive constants $M_1,\ M_2$ (without loss of generality, we assume in the rest of this manuscript that $M_1\geq1, M_2\geq1$), what we want to do next is to deduce some energy type estimates in terms of $\left\|\left(\varphi_{0}, \psi_{0}, \chi_{0}, z_{0}\right)\right\|_1$, $\underline{V}$, $\overline{V}$, $\underline{\Theta}$, and $\overline{\Theta}$, but are independent of $M_1$ and $M_2$. Throughout this paper, we assume $\delta$ (the strength of the rarefaction waves) and the radiation constant $a$ is small enough such that
 \begin{eqnarray}
\delta M_{1}^{100}M_{2}^{100+100b}&\ll& 1,\label{2.7-delta}\\
a M_{1}^{100}M_{2}^{100+100b}&\ll& 1.\label{2.7-a}
\end{eqnarray}

The following lemma guarantees that $\tilde{p}(v,s)$ is a convex function with respect to $(v,s)$. In fact, from \eqref{p7}, \eqref{p8}, \eqref{p9}, the \emph{a priori} assumption \eqref{2.6}, and the assumption \eqref{2.7-a} imposed on the radiation constant $a$, we can get that
\begin{lemma} Suppose that $(\varphi(t,x), \psi(t,x), \chi(t,x), z(t,x))\in X(0,T;M_1,M_2)$  is a solution to the Cauchy problem \eqref{2.2} and \eqref{2.3} defined on the strip $\Pi_T:=[0,T]\times\mathbb{R}$ and satisfying the \emph{a priori} assumption \eqref{2.6}, then $\tilde{p}(v,s)$ is convex with respect to $v$ and $s$ provided that $a>0$ is sufficiently small such that \eqref{2.7-a} holds. Consequently, we have
\begin{eqnarray*}
\tilde{p}(v,s)-\tilde{p}(V,\bar{s})-\tilde{p_{v}}(V,\bar{s})\varphi-\tilde{p_{s}}(V,\bar{s})\xi\geq 0.
\end{eqnarray*}
\end{lemma}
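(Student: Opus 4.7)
The plan is to verify that the Hessian of $\tilde{p}(v,s)$ is positive definite on a suitable bounded region in $(v,s)$-space containing both $(V(t,x),\bar{s})$ and $(v(t,x),s(t,x))$, and then to extract the desired pointwise inequality via Taylor's formula with integral remainder.

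The first step is to bound each summand in \eqref{p7}, \eqref{p8}, and \eqref{p9} using the \emph{a priori} bounds $M_1^{-1}\le v\le M_1$ and $M_2^{-1}\le\theta\le M_2$. In \eqref{p7} the leading summand $(C_{v}R^{3}+3C_{v}^{2}R^{2}+2C_{v}^{3}R)/(v^{3}\theta^{2})$ admits a strictly positive lower bound of order $(M_1^{3}M_2^{2})^{-1}$, which is \emph{independent} of $a$; every other summand is either manifestly non-negative or else carries an explicit factor $a^{j}$ with $j\ge 1$ times a polynomial in $v,\theta$ controlled by $M_1^{k_1}M_2^{k_2}$ with $k_{1},k_{2}$ of order at most a small multiple of $b$. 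The smallness hypothesis \eqref{2.7-a} then forces each of these $a$-dependent terms to be strictly smaller than, say, half of the leading term, yielding $\partial_{v}^{2}\tilde{p}>0$. The identical strategy handles \eqref{p8}, whose only sign-indefinite piece $(\tfrac{16aC_v}{3}-8aR)\theta$ is already proportional to $a$, and the determinant \eqref{p9}, in which the negative contributions $-52aC_{v}R^{2}$, $-24aR^{3}$, $-1200a^{2}R^{2}$, $-320a^{3}R$, $-256a^{4}$ are all dominated by the $a$-free leading term $(C_{v}R^{3}+C_{v}^{2}R^{2})/(v^{4}\theta^{2})$ under \eqref{2.7-a}.

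Once pointwise positive-definiteness of $H:=\nabla^{2}\tilde{p}$ has been established on the relevant region, I would parameterize the segment $\gamma(\tau):=(V+\tau\varphi,\bar{s}+\tau\xi)$ for $\tau\in[0,1]$. Since $[M_{1}^{-1},M_{1}]$ is convex, the $v$-component stays in it; and since $s_{\theta}>0$, the relation \eqref{entropy} inverts to give $\theta(\tau)=\tilde{\theta}(\gamma(\tau))$ in a bounded interval (uniformly in $a$, since the $\tfrac{4}{3}av\theta^{3}$ contribution to $s$ is subdominant by \eqref{2.7-a}). Hence $H(\gamma(\tau))$ remains positive semi-definite for every $\tau\in[0,1]$, and Taylor's formula with integral remainder
\begin{equation*}
\tilde{p}(v,s)-\tilde{p}(V,\bar{s})-\tilde{p}_{v}(V,\bar{s})\varphi-\tilde{p}_{s}(V,\bar{s})\xi=\int_{0}^{1}(1-\tau)\,(\varphi,\xi)\,H(\gamma(\tau))\,(\varphi,\xi)^{T}\,d\tau
\end{equation*}
immediately produces the desired nonnegativity.

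The main obstacle I anticipate is the algebraic bookkeeping in \eqref{p9}, where several negative coefficients at different orders in $a$ must all be simultaneously dominated by the single $a$-free leading term; this is precisely the reason the exponents $100$ and $100+100b$ in \eqref{2.7-a} are chosen so generously, so that every worst-case monomial in $M_{1},M_{2}$ coming from an $a$-term is absorbed. A secondary technical point is confirming that $\theta(\tau)$ along $\gamma(\tau)$ does not escape the range in which the term-by-term bounds are valid, but this follows at once from the monotonicity of $s$ in $\theta$ combined with the smallness of $a$.
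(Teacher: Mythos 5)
Your proposal is correct and follows essentially the same route as the paper: the paper's Lemma 2.4 is justified exactly by noting that the $a$-free leading terms in \eqref{p7}, \eqref{p8}, \eqref{p9} are positive and bounded below via \eqref{2.6}, while every $a$-dependent term is absorbed by \eqref{2.7-a}, after which the pointwise inequality is the standard tangent-plane consequence of convexity (your Taylor formula with integral remainder along the segment joining $(V,\bar{s})$ to $(v,s)$). The only point worth retaining explicitly in a write-up is the one you already flag, namely that $\tilde{\theta}$ evaluated along the segment stays within a range still controlled by powers of $M_1, M_2$ so that \eqref{2.7-a} applies there too.
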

\begin{remark} To ensure that we can find sufficiently small positive constants $\delta_0, a_0$ such that the assumptions \eqref{2.7-delta} and \eqref{2.7-a} hold for all $0<\delta\leq \delta_0, 0<a\leq a_0$, a sufficient condition is to show that the positive lower and upper bounds on $v(t,x)$ and $\theta(t,x)$ depends only on $\left\|\left(\varphi_{0}, \psi_{0}, \chi_{0}, z_{0}\right)\right\|_1$, $\underline{V}$, $\overline{V}$, $\underline{\Theta}$, and $\overline{\Theta}$, but are independent of $M_1$, $M_2$, $\delta$ and $a$.
\end{remark}

Now we give the following lemma concerning on the basic energy estimates about the solution $(\varphi(t,x), \psi(t,x)$, $\chi(t,x), z(t,x))$, which will be frequently used later on.

\begin{lemma} [Basic energy estimates] In addition to the conditions stated in Lemma 2.4, we assume further that \eqref{2.7-delta} holds, then we have for all $0\leq t\leq T$ that
\begin{eqnarray}\label{2.8}
   \int_{\mathbb{R}}z(t,x)dx+\int_{0}^{t}\int_{\mathbb{R}}\phi(\tau,x) z(\tau,x)dxd\tau\lesssim 1,
\end{eqnarray}
\begin{eqnarray}\label{2.9}
 \int_{\mathbb{R}}z^{2}(t,x)dx+\int_{0}^{t}\int_{\mathbb{R}}\left(\frac{d}{v^{2}}z_{x}^{2}+\phi z^{2}\right)(\tau,x)dxd\tau
  \lesssim 1,
\end{eqnarray}
\begin{eqnarray}\label{2.10}
&&\int_{\mathbb{R}}\eta(t,x)dx +\int^{t}_{0}\int_{\mathbb{R}}\left(\frac{\mu\Theta\psi^{2}_{x}}{v\theta}
+\frac{\kappa(v,\theta)\Theta\chi^{2}_{x}}{v\theta^{2}}\right)(\tau,x)dxd\tau +\int^{t}_{0}\int_{\mathbb{R}}\left(\frac{\lambda\Theta\phi z}{\theta}\right)(\tau,x)dxd\tau\nonumber\\
&&+\int^{t}_{0}\int_{\mathbb{R}}\Big[\left(\tilde{p}(v,s) -\tilde{p}\left(V,\bar{s}\right)-\tilde{p_{v}} \left(V,\bar{s}\right)\varphi-\tilde{p_{s}}(V,\bar{s})\xi\right)U_{x}\Big](\tau,x)dxd\tau
\lesssim 1.
\end{eqnarray}
\end{lemma}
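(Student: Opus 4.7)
The plan is to prove the three estimates in order, since (2.8) and (2.9) are standard while (2.10) is the substantive one requiring the convex entropy structure together with Lemmas 2.2 and 2.4. Throughout I will integrate by parts on $\mathbb{R}$ with vanishing boundary contributions, which is justified by the regularity in the definition of $X(0,T;M_1,M_2)$ together with the $H^1$ decay at spatial infinity.

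For (2.8), I would integrate the $z$-equation $z_t=(dz_x/v^2)_x-\phi z$ over $\mathbb{R}$. The diffusive term contributes nothing after integration by parts, yielding $\tfrac{d}{dt}\int z\,dx=-\int\phi z\,dx$. Since $0\le z\le 1$, the sign $\phi z\ge 0$ lets me integrate in time and use $z_0\in L^1(\mathbb{R})$ together with $H_0$ to close. For (2.9), I multiply the same equation by $z$, integrate, and obtain
\begin{eqnarray*}
\frac{1}{2}\frac{d}{dt}\int_\mathbb{R} z^2\,dx+\int_\mathbb{R}\frac{d}{v^2}z_x^2\,dx+\int_\mathbb{R}\phi z^2\,dx=0,
\end{eqnarray*}
and again integrate in time, using $0\le z_0\le 1$ and $z_0\in L^1\cap L^\infty\subset L^2$. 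No smallness of $\delta$ or $a$ is needed here.

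For (2.10), the starting point is the entropy identity (2.5). I would integrate it over $[0,t]\times\mathbb{R}$. The conservative term $(\mu\psi\psi_x/v+\kappa\chi\chi_x/(v\theta))_x$ vanishes. By Lemma 2.4 the convexity term $(\tilde p(v,s)-\tilde p(V,\bar s)-\tilde p_v(V,\bar s)\varphi-\tilde p_s(V,\bar s)\xi)U_x$ is nonnegative since $U_x\ge 0$ by Lemma 2.2(i); together with the manifestly nonnegative dissipation and $\lambda\phi z\Theta/\theta$, this gives the four terms appearing on the left of (2.10). It remains to control the error terms on the right of (2.5), namely the quadratic remainder block in braces, the $U_{xx},\Theta_{xx}$ block, the $U_x^2$ block, the $\kappa_x\chi\Theta_x/(v\theta)$ term, and the profile errors $q(V,\Theta)$, $g(V,\Theta)_x$, $r(V,\Theta)$, plus $\lambda\phi z$ (controlled by (2.8)).

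Each error term is estimated by Cauchy--Schwarz with a small parameter $\epsilon$: the factors quadratic in $(\varphi,\psi,\chi)$ are absorbed into $\int\eta\,dx$ (which dominates $\Phi(\theta/\Theta)$ and $\Phi(v/V)$ and hence bounds $\psi^2, \chi^2, \varphi^2$ weighted by the equilibrium), while any factor of $\psi_x$ or $\chi_x$ is absorbed into the dissipation on the left with a small coefficient. The remaining factors are products of derivatives of the profile $(V,U,\Theta)$ or of $g,q,r$, whose space-time integrability is furnished by Lemma 2.2(ii)--(iii): in particular, $\int_0^\infty\|(V_x,U_x,\Theta_x)\|_{L^{2+r}}^{2+r}dt\lesssim\delta$ and $\int_0^\infty\|(g_x,q,r)\|_{L^p}dt\lesssim\delta^{1/3}$. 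A typical model estimate is
\begin{eqnarray*}
\int_0^t\!\!\int_\mathbb{R}\frac{2\mu U_x\chi\psi_x}{v\theta}\,dxd\tau\le\epsilon\int_0^t\!\!\int_\mathbb{R}\frac{\mu\Theta\psi_x^2}{v\theta}\,dxd\tau+C(\epsilon)M_1^{k_1}M_2^{k_2}\int_0^t\|U_x\|_{L^\infty}\|\chi\|^2\,d\tau,
\end{eqnarray*}
and the second piece is bounded by $C(\epsilon)M_1^{k_1}M_2^{k_2}\delta\cdot\sup_\tau\int\eta\,dx$, which is absorbed using (2.7-delta). Applying this template to every error term on the right of (2.5) and picking $\epsilon$ small closes a Gronwall-type inequality for $\int\eta\,dx$.

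The main subtlety is bookkeeping: several error terms carry powers of $v$ and $\theta$ that under the a priori bound (2.6) are majorized by polynomials in $M_1,M_2$. I will need to verify that in every single estimate the exponents appearing are dominated by the budget $M_1^{100}M_2^{100+100b}$ in (2.7-delta), so the smallness of $\delta$ beats the a priori constants and the final bound in (2.10) is indeed independent of $M_1,M_2$. Once this is checked, summing all the estimates and applying Gronwall yields $\int\eta\,dx+\text{(dissipation)}\lesssim 1$, which is precisely (2.10).
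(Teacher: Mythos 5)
Your treatment of \eqref{2.8} and \eqref{2.9} matches the paper exactly, and your overall framework for \eqref{2.10} (integrate \eqref{2.5}, use Lemma 2.4 and $U_x>0$ for the sign of the convexity term, absorb quadratic factors into $\int\eta$, absorb $\psi_x,\chi_x$ into the dissipation, and use Lemma 2.2 for the profile errors) is the right skeleton. However, there is a genuine gap: your absorption template does not cover the error terms on the right of \eqref{2.5} that contain $\varphi_x$, namely $-\frac{\mu U_x\psi\varphi_x}{v^2}$ and $-\frac{\kappa(v,\theta)\Theta_x\chi\varphi_x}{v^2\theta}$. The entropy identity provides no dissipation in $\varphi_x$ — the left-hand side only controls $\psi_x^2$ and $\chi_x^2$ — and $\varphi_x$ is certainly not dominated by $\int\eta\,dx$. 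So Cauchy--Schwarz on these terms leaves you with $\int_0^t\|\varphi_x/v\|^2\,d\tau$ multiplied by a small power of $\delta$, and your Gronwall argument cannot close.

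The paper resolves this with a second, coupled estimate: multiplying $\eqref{2.2}_2$ by $\varphi_x/v$ yields the identity \eqref{2.18}, whose integration gives control of $\|\varphi_x/v(t)\|^2+\int_0^t\int_{\mathbb{R}}R\theta\varphi_x^2/v^3$ in terms of $1+\|\psi(t)\|^2$ plus the entropy dissipation multiplied by a polynomial in $M_1,M_2$ (estimate \eqref{2.26}). Substituting \eqref{2.26} back into the entropy inequality \eqref{2.17}, the dangerous product is $\delta^{1/4}M_1M_2\cdot\big(M_1^2M_2^7+\delta^4M_1^3M_2\big)$, which is small precisely because of \eqref{2.7-delta}; only then does Gronwall close and yield \eqref{2.10}. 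Without this auxiliary momentum-equation estimate and the two-way coupling, the proof of \eqref{2.10} is incomplete. (This is also why the lemma's statement explicitly invokes \eqref{2.7-delta}: the smallness of $\delta$ is needed not merely to beat generic $M_1,M_2$ powers in each individual term, but to break the circularity between the $\varphi_x$ estimate and the entropy estimate.)
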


\begin{proof}
The estimates \eqref{2.8} and \eqref{2.9} follows directly from $\eqref{1.1}_{4}$ and integrations by parts. As for \eqref{2.10}, we have by integrating \eqref{2.5} with respect to $t$ and $x$ over $(0,t)\times\mathbb{R}$ that
\begin{eqnarray}\label{2.11}
&&\int_{\mathbb{R}}\eta(t,x)dx+\int^{t}_{0}\int_{\mathbb{R}}\left(\frac{\mu\Theta\psi^{2}_{x}}{v\theta}
+\frac{\kappa(v,\theta)\Theta\chi^{2}_{x}}{v\theta^{2}}\right)
+\int_{0}^{t}\int_{\mathbb{R}}\frac{\lambda\Theta\phi z}{\theta}\nonumber\\
&&+\int^{t}_{0}\int_{\mathbb{R}} \left(\tilde{p}(v,s)-\tilde{p}(V,\bar{s}) -\tilde{p_{v}}(V,\bar{s})\varphi -\tilde{p_{s}}(V,\bar{s})\xi\right)U_{x}\nonumber\\
&=&\int_{\mathbb{R}}\eta_{0}dx+\sum_{j=1}^{5}I_{j}.
\end{eqnarray}

By virtue of Lemma 2.2, the \emph{a priori} assumption \eqref{2.6}, \eqref{2.7-delta}, \eqref{2.8}, and Cauchy-Schwarz's inequality, $I_{j}$ $(j=1,2,3,4,5)$ can be bounded as follows:
\begin{eqnarray}\label{2.12}
I_{1}&=&\int^{t}_{0}\int_{\mathbb{R}}
\left(\frac{2\mu U_{x}\chi\psi_{x}}{v\theta}-\frac{\mu U_{x}\psi\varphi_{x}}{v^{2}}-\frac{\kappa(v,\theta)\Theta_{x}\chi\varphi_{x}}{v^{2}\theta}
+\frac{\kappa(v,\theta)\Theta_{x}\chi\chi_{x}}{v\theta^{2}}\right)\nonumber\\
&\leq& \delta^{\frac{1}{4}}\left(\int_{0}^{t}\left\|\frac{\varphi_{x}}{v}(\tau)\right\|^{2}d\tau+\int^{t}_{0}\int_{\mathbb{R}}\left(\frac{\mu\Theta\psi^{2}_{x}}{v\theta}
+\frac{\kappa(v,\theta)\Theta\chi^{2}_{x}}{v\theta^{2}}\right)\right)\nonumber\\
&&+\delta^{-\frac{1}{4}}C\bigg(\int_{0}^{t}\left\|U_{x}\right\|^{\frac{1}{2}}_{L^{\infty}}\left\|U_{x}\right\|^{\frac{3}{2}}_{L^{\infty}}
\int_{\mathbb{R}}\left(\frac{\psi}{v}\right)^{2}dxd\tau+\int^{t}_{0}\int_{\mathbb{R}}\left(\frac{\mu U^{2}_{x}\chi^{2}}{v\Theta\theta}
+\frac{\kappa(v,\theta)\chi^{2}\Theta^{2}_{x}}{v\theta^{2}\Theta}\right)\nonumber\\
&&+\int_{0}^{t}\left\|\Theta_{x}\right\|^{2}_{L^{\infty}}
\int_{\mathbb{R}}\left(\frac{\kappa(v,\theta)\chi}{v\theta}\right)^{2}dxd\tau\bigg)\\
&\leq& \delta^{\frac{1}{4}}\left(\int_{0}^{t}\left\|\frac{\varphi_{x}}{v}\right\|^{2}d\tau+\int^{t}_{0}\int_{\mathbb{R}}\left(\frac{\mu\Theta\psi^{2}_{x}}{v\theta}
+\frac{\kappa(v,\theta)\Theta\chi^{2}_{x}}{v\theta^{2}}\right)\right)\nonumber\\
&&+\delta^{\frac{3}{4}}C\left(M_{1}^{2}+M_{1}M_{2}+M_{1}^{2}M_{2}^{2}\left(1+M_{1}M_{2}^{b}\right)^{2}+M_{1}M_{2}^{2}\left(1+M_{1}M_{2}^{b}\right)\right)
\int_{0}^{t}\left(1+\tau\right)^{-\frac{3}{2}} \left\|\left(\psi,\chi\right)(\tau)\right\|^{2}d\tau,\nonumber
\end{eqnarray}
\begin{eqnarray}\label{2.13}
I_{2}&=&\int^{t}_{0}\int_{\mathbb{R}}\left(\frac{\mu U_{xx}\psi}{v}+\frac{\kappa(v,\theta)\Theta_{xx}\chi}{v\theta}\right)\nonumber\\
&\lesssim&\int_{0}^{t}\left(\left\|U_{xx}\right\|+\left\|\Theta_{xx}\right\|
+\left\|U_{xx}\right\|\left\|\frac{\psi}{v}\right\|^{2}+\left\|\Theta_{xx}\right\|
\left\|\frac{\kappa(v,\theta)\chi}{v\theta}\right\|^{2}\right)d\tau\nonumber\\
&\lesssim&\delta^{\frac{1}{8}}+\delta^{\frac{1}{4}}\left(M_{1}^{2}+M_{1}^{2}M_{2}^{2}\left(1+M_{1}^{2}M_{2}^{2b}\right)\right)
\int_{0}^{t}\left(1+\tau\right)^{-\frac{9}{8}}\left\|\left(\psi,\chi\right)(\tau)\right\|^{2}d\tau,
\end{eqnarray}
\begin{eqnarray}\label{2.14}
I_{3}&=&\int^{t}_{0}\int_{\mathbb{R}}
\left(\frac{\mu U_{x}^{2}\chi}{v\theta}-\frac{\mu U_{x}\psi V_{x}}{v^{2}}-\frac{\kappa(v,\theta)\Theta_{x}\chi V_{x}}{v^{2}\theta}
\right)\nonumber\\
&\lesssim&\int_{0}^{t}\int_{\mathbb{R}}\left(\left|U_{x}\right|^{\frac{5}{2}}+\left|V_{x}\right|^{\frac{5}{2}}
+\left|\frac{V_{x}\chi}{v^{2}}\right|^{\frac{5}{3}}
+\left|\frac{U_{x}\chi}{v\theta}\right|^{\frac{5}{3}}+\left|\frac{\kappa(v,\theta)\Theta_{x}\chi}{v^{2}\theta}\right|^{\frac{5}{3}}\right)\nonumber\\
&\lesssim&\int_{0}^{t}\left\|\left(U_{x},V_{x},\Theta_{x}\right)\right\|^{\frac{5}{2}}_{L^{\frac{5}{2}}}d\tau
+\int^{t}_{0}\int_{\mathbb{R}}\left(\left|V_{x}\right|^{\frac{3}{2}}\left|\frac{\psi}{v^{2}}\right|^{2}
+\left|U_{x}\right|^{\frac{3}{2}}\left|\frac{\chi}{v\theta}\right|^{2}
+\left|\Theta_{x}\right|^{\frac{3}{2}}\left|\frac{\kappa(v,\theta)\chi}{v^{2}\theta}\right|^{2}\right)\nonumber\\
&\lesssim&\delta+\delta^{\frac{1}{2}}\left(M_{1}^{4}+M_{1}^{2}M_{2}^{2}+M_{1}^{4}M_{2}^{2}\left(1+M_{1}^{2}M_{2}^{2b}\right)\right)
\int_{0}^{t}\left(1+\tau\right)^{-\frac{5}{4}}\left\|\left(\psi,\chi\right)(\tau)\right\|^{2}d\tau,
\end{eqnarray}

\begin{eqnarray}\label{2.15}
I_{4}&=&\int^{t}_{0}\int_{\mathbb{R}}\left(-q(V,\Theta)-g(V,\Theta)_{x}\psi+g(V,\Theta)_{x}U-r(V,\Theta)\xi\right)\nonumber\\
&\lesssim&\int_{0}^{t}\left(\left\|q\left(V,\Theta\right)\right\|_{L^{1}}+\left\|g\left(V,\Theta\right)_{x}\right\|_{L^{1}}
+\left\|g\left(V,\Theta\right)_{x}\right\|+\left\|r\left(V,\Theta\right)\right\|
+\left\|g\left(V,\Theta\right)_{x}\right\|\left\|\psi\right\|^{2}+\left\|r\left(V,\Theta\right)\right\|\left\|\xi\right\|^{2}\right)d\tau\nonumber\\
&\lesssim&\delta^{\frac{1}{3}}+\delta^{\frac{2}{3}}\int_{0}^{t}\left(1+\tau\right)^{-\frac{4}{3}}\left\|\left(\psi,\xi\right)(\tau)\right\|^{2}d\tau,
\end{eqnarray}
and
\begin{eqnarray}\label{2.16}
I_{5}&=&\int^{t}_{0}\int_{\mathbb{R}}\left(\lambda\phi z+\frac{\kappa_{x}(v,\theta)\chi\Theta_{x}} {v\theta}\right)\nonumber\\
&\lesssim& 1+\int_{0}^{t}\int_{\mathbb{R}}\left(\frac{\theta^{b}\left|\varphi_{x}\Theta_{x}\chi\right|}{v\theta}
+\frac{\theta^{b}\left|V_{x}\Theta_{x}\chi\right|}{v\theta}+\theta^{b-2}\left|\Theta_{x}\chi_{x}\chi\right|
+\theta^{b-2}\left|\Theta_{x}^{2}\chi\right|\right)\nonumber\\
&\lesssim& 1+\delta^{\frac{1}{4}}\int_{0}^{t}\int_{\mathbb{R}}\left(\left|\frac{\varphi_{x}}{v}\right|^{2}
+\frac{\kappa(v,\theta)\Theta\chi_{x}^{2}}{v\theta^{2}}\right)+\int_{0}^{t}\int_{\mathbb{R}}\left(\left|V_{x}\right|^{\frac{5}{2}}
+\left|\Theta_{x}\right|^{\frac{5}{2}}+\left|\frac{\theta^{b-1}\chi\Theta_{x}}{v}\right|^{\frac{5}{3}}
+\left|\theta^{b-2}\chi\Theta_{x}\right|^{\frac{5}{3}}\right)\nonumber\\
&&+\delta^{-\frac{1}{4}}\int_{0}^{t}\int_{\mathbb{R}}\left(\theta^{2b-2}\left|\Theta_{x}\chi\right|^{2}
+\frac{v\theta^{2b-2}\chi^{2}\left|\Theta_{x}\right|^{2}}{\Theta\kappa(v,\theta)}\right)\\
&\lesssim& 1+\delta^{\frac{1}{4}}\left(1+\int_{0}^{t}\left\|\frac{\varphi_{x}}{v}(\tau)\right\|^{2}d\tau
+\int^{t}_{0}\int_{\mathbb{R}}\frac{\kappa(v,\theta)\Theta\chi^{2}_{x}}{v\theta^{2}}\right)\nonumber\\
&&+\left(\delta^{\frac{3}{4}}M_{2}^{2b-2}+\delta^{\frac{1}{2}}M_{1}^{2}M_{2}^{2b-2}
+\delta^{\frac{3}{4}}M_{2}^{b-2}+\delta^{\frac{1}{2}}M_{2}^{2b-4}\right)\int_{0}^{t}\left[\left(1+\tau\right)^{-\frac{3}{2}}+\left(1+\tau\right)^{-\frac{5}{4}}\right]
\left\|\chi(\tau)\right\|^{2}d\tau.\nonumber
\end{eqnarray}

Combining \eqref{2.7-delta}, \eqref{2.11}-\eqref{2.16}, and making use of Gronwall's inequality, we can deduce that
\begin{eqnarray}\label{2.17}
&&\int_{\mathbb{R}}\eta(t,x)dx+\int^{t}_{0}\int_{\mathbb{R}}\left(\frac{\mu\Theta\psi^{2}_{x}}{v\theta}
+\frac{\kappa(v,\theta)\Theta\chi^{2}_{x}}{v\theta^{2}}\right)
+\int_{0}^{t}\int_{\mathbb{R}}\frac{\phi z}{\theta}\nonumber\\
&&+\int^{t}_{0}\int_{\mathbb{R}}\left(\tilde{p}(v,s) -\tilde{p}(V,\bar{s})-\tilde{p_{v}}(V,\bar{s})\varphi -\tilde{p_{s}}(V,\bar{s})\xi\right)U_{x}\\
&\lesssim& 1+\delta^{\frac{1}{4}}M_{1}M_{2}\int^{t}_{0} \int_{\mathbb{R}}\frac{\theta\varphi_{x}^{2}}{v^{3}}.\nonumber
\end{eqnarray}

Now we turn to estimate the term $\int^{t}_{0}\int_{\mathbb{R}}\frac{\theta\varphi_{x}^{2}}{v^{3}}$. For this purpose, we multiply $\eqref{2.2}_{2}$ by $\frac{\varphi_{x}}{v}$ to deduce that
\begin{eqnarray}\label{2.18}
 &&\left[\frac{\mu}{2}\left(\frac{\varphi_{x}}{v}\right)^2-\frac{\varphi_{x}\psi}{v}\right]_{t}
 +\frac{R\theta\varphi^{2}_{x}}{v^{3}}+\left(\frac{\psi\psi_{x}}{v}\right)_{x}\nonumber\\
 &=&\left[\frac{\psi_{x}^{2}}{v}+\frac{p_{\theta}(v,\theta)\varphi_{x}\chi_{x}}{v}\right]
 +\left\{\frac{\left(p_{v}(v,\theta)-p_{V}(V,\Theta)\right)V_{x}\varphi_{x}}{v}
 +\frac{\left(p_{\theta}(v,\theta)-p_{\Theta}(V,\Theta)\right)\Theta_{x}\varphi_{x}}{v}\right\}\\
 &&+\left[\frac{U_{x}\psi\varphi_{x}}{v^{2}}-\frac{V_{x}\psi\psi_{x}}{v^{2}}+\frac{\mu V_{x}\psi_{x}\varphi_{x}}{v^{3}}\right]
 +\left[\frac{\mu V_{x}U_{x}\varphi_{x}}{v^{3}}-\frac{\mu U_{xx}\varphi_{x}}{v^{2}}\right]+\frac{g(V,\Theta)_{x}\varphi_{x}}{v}.\nonumber
\end{eqnarray}
Then we integrate \eqref{2.18} over $(0,t)\times\mathbb{R}$ to derive
\begin{eqnarray}\label{2.19}
 &&\left\|\frac{\varphi_{x}}{v}\left(t\right)\right\|^{2}+\int^{t}_{0}\int_{\mathbb{R}}\frac{R\theta\varphi^{2}_{x}}{v^{3}}\nonumber\\
 &\lesssim&1+\left\|\psi(t)\right\|^{2}
 +\underbrace{\int_{0}^{t}\int_{\mathbb{R}}\left[\frac{\psi_{x}^{2}}{v}+\frac{p_{\theta}(v,\theta)\varphi_{x}\chi_{x}}{v}\right]}_{I_{6}}\nonumber\\
 &&+\underbrace{\int_{0}^{t}\int_{\mathbb{R}}\left\{\frac{\left(p_{v}(v,\theta)-p_{V}(V,\Theta)\right)V_{x}\varphi_{x}}{v}
 +\frac{\left(p_{\theta}(v,\theta)-p_{\Theta}(V,\Theta)\right)\Theta_{x}\varphi_{x}}{v}\right\}}_{I_{7}}\\
 &&+\underbrace{\int_{0}^{t}\int_{\mathbb{R}}\left[\frac{U_{x}\psi\varphi_{x}}{v^{2}}-\frac{v_{x}\psi\psi_{x}}{v^{2}}
 +\frac{\mu V_{x}\psi_{x}\varphi_{x}}{v^{3}}\right]}_{I_{8}}
 +\underbrace{\int_{0}^{t}\int_{\mathbb{R}}\left[\frac{\mu V_{x}U_{x}\varphi_{x}}{v^{3}}-\frac{\mu U_{xx}\varphi_{x}}{v^{2}}\right]}_{I_{9}}\nonumber\\
 &&+\underbrace{\int_{0}^{t}\int_{\mathbb{R}} \frac{g(V,\Theta)_{x}\varphi_{x}}{v}}_{I_{10}}.\nonumber
\end{eqnarray}

Now we turn to estimate $I_{j}$ ($j=6,\cdots, 10$) term by term. In fact, we have from Lemma 2.2, \emph{a priori} assumption \eqref{2.6} and Cauchy-Schwarz's inequality that
\begin{eqnarray}
 I_{6}&\leq&\frac{1}{10}\int^{t}_{0}\int_{\mathbb{R}}\frac{R\theta\varphi^{2}_{x}}{v^{3}}
 +C\left(\left\|\theta\right\|_{\infty}\int^{t}_{0}\int_{\mathbb{R}}\frac{\mu\Theta\psi^{2}_{x}}{v\theta}
 +\left\|\frac{\theta^{2}p^{2}_{\theta}\left(v,\theta\right)}{\kappa(v,\theta)p_{v}\left(v,\theta\right)}\right\|_{\infty}
 \int^{t}_{0}\int_{\mathbb{R}}\frac{\kappa(v,\theta)\Theta\chi^{2}_{x}}{v\theta^{2}}\right)\label{2.20}\\
 &\leq&\frac{1}{10}\int^{t}_{0}\int_{\mathbb{R}}\frac{R\theta\varphi^{2}_{x}}{v^{3}}
 +C\left(M_{2}\int^{t}_{0}\int_{\mathbb{R}}\frac{\mu\Theta\psi^{2}_{x}}{v\theta}
 +\left(M_{2}+M_{1}^{2}M_{2}^{7}\right)\int^{t}_{0} \int_{\mathbb{R}}\frac{\kappa(v,\theta)\Theta\chi^{2}_{x}} {v\theta^{2}}\right),\label{2.21}
\end{eqnarray}
\begin{eqnarray}\label{2.22}
 I_{7}&\leq&\frac{1}{10}\int^{t}_{0}\int_{\mathbb{R}}\frac{R\theta\varphi^{2}_{x}}{v^{3}}
 +C\int^{t}_{0}\int_{\mathbb{R}}\left(\frac{\left(p_{v}(v,\theta)-p_{V}(V,\Theta)\right)^{2}V^{2}_{x}}{v\left(-p_{v}(v,\theta)\right)}
 +\frac{\left(p_{\theta}(v,\theta)-p_{\Theta}(V,\Theta)\right)^{2}\Theta^{2}_{x}}{v\left(-p_{v}(v,\theta)\right)}\right)\nonumber\\
 &\leq&\frac{1}{10}\int^{t}_{0}\int_{\mathbb{R}}\frac{R\theta\varphi^{2}_{x}}{v^{3}}
 +C\int^{t}_{0}\int_{\mathbb{R}}\left(\left(\frac{\chi^{2}}{v^{3}\theta}+\frac{\varphi^{2}}{v^{3}\theta}
 +\frac{\varphi^{2}}{v\theta}\right)V^{2}_{x}+\left(\frac{\varphi^{2}}{vV^{2}\theta}+v\chi^{2}\theta^{3}+v\theta\Theta^{2}\chi^{2}
 +\frac{v\Theta^{4}\chi^{2}}{\theta}\right)\Theta^{2}_{x}\right)\nonumber\\
 &\leq&\frac{1}{10}\int^{t}_{0}\int_{\mathbb{R}}\frac{R\theta\varphi^{2}_{x}}{v^{3}}
 +\delta C\left(M_{1}^{3}M+M_{1}M_{2}^{3}\right)\int^{t}_{0}\left(1+\tau\right)^{-\frac{3}{2}}\left\|\left(\varphi,\chi\right)(\tau)\right\|^{2}d\tau,
\end{eqnarray}
\begin{eqnarray}\label{2.23}
 I_{8}&\leq&\frac{1}{10}\int^{t}_{0}\int_{\mathbb{R}}\frac{R\theta\varphi^{2}_{x}}{v^{3}}
 +C\int^{t}_{0}\int_{\mathbb{R}}\left(\frac{U_{x}^{2}\psi^{2}}{v^{3}\left(-p_{v}(v,\theta)\right)}
 +\frac{V_{x}^{2}\psi^{2}}{v^{3}\left(-p_{v}(v,\theta)\right)}+\frac{V_{x}^{2}\psi_{x}^{2}}{v^{5}\left(-p_{v}(v,\theta)\right)}\right)\nonumber\\
 &\leq&\frac{1}{10}\int^{t}_{0}\int_{\mathbb{R}}\frac{R\theta\varphi^{2}_{x}}{v^{3}}
 +\delta^{4}CM_{1}^{3}\int^{t}_{0}\int_{\mathbb{R}}\frac{\mu\Theta\psi^{2}_{x}}{v\theta}
 +\delta CM_{1}M_{2}\int^{t}_{0}\left(1+\tau\right)^{-\frac{3}{2}}\left\|\psi(\tau)\right\|^{2}d\tau,
\end{eqnarray}
\begin{eqnarray}\label{2.24}
 I_{9}&\leq&\frac{1}{10}\int^{t}_{0}\int_{\mathbb{R}}\frac{R\theta\varphi^{2}_{x}}{v^{3}}
 +C\int^{t}_{0}\int_{\mathbb{R}}\left(\frac{U_{x}^{2}V_{x}^{2}}{v^{5}\left(-p_{v}(v,\theta)\right)}
 +\frac{U_{xx}^{2}}{v^{3}\left(-p_{v}(v,\theta)\right)}\right)\nonumber\\
 &\leq&\frac{1}{10}\int^{t}_{0}\int_{\mathbb{R}}\frac{R\theta\varphi^{2}_{x}}{v^{3}}
 +C\left(M_{1}^{3}M_{2}\int^{t}_{0}\int_{\mathbb{R}}\left(\left|U_{x}\right|^{\frac{5}{2}}+\left|V_{x}\right|^{10}\right)
 +M_{1}M_{2}\int^{t}_{0}\int_{\mathbb{R}}U_{xx}^{2}\right)\nonumber\\
  &\leq&\frac{1}{10}\int^{t}_{0}\int_{\mathbb{R}}\frac{R\theta\varphi^{2}_{x}}{v^{3}}
 +C\left(M_{1}M_{2}\delta^{\frac{1}{2}}+M_{1}^{3}M_{2}\delta\right),
\end{eqnarray}
and
\begin{eqnarray}\label{2.25}
 I_{10}&\leq&\frac{1}{10}\int^{t}_{0}\int_{\mathbb{R}}\frac{R\theta\varphi^{2}_{x}}{v^{3}}
 +C\int^{t}_{0}\int_{\mathbb{R}}\frac{\left(g(V,\Theta)_{x}\right)^{2}}{v\left(-p_{v}(v,\theta)\right)}\nonumber\\
 &\leq&\frac{1}{10}\int^{t}_{0}\int_{\mathbb{R}}\frac{R\theta\varphi^{2}_{x}}{v^{3}}
 +CM_{1}M_{2}\delta^{\frac{4}{3}}.
\end{eqnarray}

Plugging \eqref{2.21}-\eqref{2.25} into \eqref{2.19} yields
\begin{eqnarray}\label{2.26}
 &&\left\|\left(\frac{\varphi_{x}}{v}\right)(t)\right\|^{2} +\int^{t}_{0}\int_{\mathbb{R}}\frac{R\theta\varphi^{2}_{x}}{v^{3}}\nonumber\\
 &\lesssim&1+\left\|\psi(t)\right\|^{2}+\left(M_{1}^{2}M_{2}^{7}+\delta^{4}M_{1}^{3}M_{2}\right)\int^{t}_{0}\int_{\mathbb{R}}\left(\frac{\mu\Theta\psi^{2}_{x}}{v\theta}
+\frac{\kappa(v,\theta)\Theta\chi^{2}_{x}}{v\theta^{2}}\right)\\
&&+\delta\left(M_{1}^{3}M_{2}+M_{1}M_{2}^{3}\right)\int_{0}^{t}\left(1+\tau\right)^{-\frac{3}{2}}
\left\|\left(\varphi,\psi,\chi\right)(\tau)\right\|^{2}d\tau.\nonumber
\end{eqnarray}

Having obtained \eqref{2.17}, and \eqref{2.26}, we can deduce \eqref{2.10} immediately by the assumption \eqref{2.7-delta} and Gronwall's inequality.
\end{proof}

By repeating the argument developed in \cite{Chen-SIMA-1992}, we can deduce the pointwise bounds of $z(t,x)$. Here we omit the proof for brevity.

\begin{lemma} Under the conditions listed in Lemma 2.5, we have for all $(t,x)\in [0,T]\times\mathbb{R}$ that
  \begin{equation}\label{2.27}
  0\leq z(t,x)\leq 1.
  \end{equation}
\end{lemma}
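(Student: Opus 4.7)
The plan is to establish \eqref{2.27} via an integrated maximum principle for the parabolic equation $\eqref{2.2}_5$, exploiting the nonnegativity of the reaction rate $\phi(\theta)=K\theta^\beta\exp(-A/\theta)\geq 0$ and the fact that the diffusion coefficient $d/v^2$ is bounded and uniformly positive under the a priori assumption \eqref{2.6}. Because Lemma 2.5 already guarantees $z\in C(0,T;H^1(\mathbb{R})\cap L^1(\mathbb{R}))$ with $z_x\in L^2(0,T;H^1(\mathbb{R}))$ and the initial datum satisfies $0\leq z_0(x)\leq 1$, all integrations by parts below are permissible and the Stampacchia truncations $z^-:=\min\{z,0\}$ and $(z-1)^+:=\max\{z-1,0\}$ are admissible test functions (they inherit the required integrability and their weak derivatives are $z_x\mathbf{1}_{\{z<0\}}$ and $z_x\mathbf{1}_{\{z>1\}}$, respectively).

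For the lower bound, I would multiply $\eqref{2.2}_5$ by $z^-$ and integrate over $\mathbb{R}$. Using the decay of $z$ and $z_x$ at $|x|\to\infty$ to drop the boundary terms, I get
\begin{equation*}
\frac{1}{2}\frac{d}{dt}\int_{\mathbb{R}}(z^-)^2\,dx+\int_{\mathbb{R}}\frac{d}{v^2}\bigl((z^-)_x\bigr)^2\,dx+\int_{\mathbb{R}}\phi(\theta)(z^-)^2\,dx=0,
\end{equation*}
since $z^-\cdot z=(z^-)^2$ pointwise. Because $d/v^2>0$ and $\phi(\theta)\geq 0$, every term on the left except the time derivative is nonnegative, so $\|z^-(t)\|_{L^2}\leq\|z^-(0)\|_{L^2}=0$ by the assumption $z_0\geq 0$. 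Hence $z^-\equiv 0$, i.e.\ $z(t,x)\geq 0$.

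For the upper bound, set $w:=z-1$, so that $w$ satisfies
\begin{equation*}
w_t-\left(\frac{dw_x}{v^2}\right)_x+\phi(\theta)w=-\phi(\theta)\leq 0,
\end{equation*}
with $w_0(x)=z_0(x)-1\leq 0$. Multiplying by $w^+$ and integrating by parts,
\begin{equation*}
\frac{1}{2}\frac{d}{dt}\int_{\mathbb{R}}(w^+)^2\,dx+\int_{\mathbb{R}}\frac{d}{v^2}\bigl((w^+)_x\bigr)^2\,dx+\int_{\mathbb{R}}\phi(\theta)(w^+)^2\,dx=-\int_{\mathbb{R}}\phi(\theta)w^+\,dx\leq 0,
\end{equation*}
where I used $ww^+=(w^+)^2$. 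All three terms on the left beyond the time derivative are nonnegative, so $\|w^+(t)\|_{L^2}^2$ is nonincreasing; since $w^+(0)\equiv 0$, we obtain $w^+\equiv 0$, equivalently $z(t,x)\leq 1$.

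The only subtle point is the rigorous justification of using $z^-$ and $(z-1)^+$ as test functions and of the integration by parts at spatial infinity. This is standard but requires the regularity $z\in C(0,T;H^1(\mathbb{R}))$, uniform positivity of $v$ from \eqref{2.6}, and the decay $z,z_x\to 0$ as $|x|\to\infty$; all of these are built into the solution class $X(0,T;M_1,M_2)$. Once these technicalities are handled (exactly as in Chen's SIMA 1992 argument for reactive gas systems, which is the reference the authors cite), the two energy identities above close the proof immediately, without appealing to any of the smallness assumptions on $\delta$ or $a$.
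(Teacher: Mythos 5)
Your proof is correct and is precisely the argument the paper has in mind: the authors omit the proof entirely, deferring to the maximum-principle argument of Chen (SIAM J. Math. Anal., 1992), and your Stampacchia truncation with $z^-$ and $(z-1)^+$ is exactly that standard argument, made legitimate here by the regularity of the solution class $X(0,T;M_1,M_2)$, the uniform positivity of $d/v^2$ under \eqref{2.6}, the nonnegativity of $\phi(\theta)$, and the hypothesis $0\leq z_0\leq 1$ from Theorem \ref{Th1.1}. Both energy identities close without any smallness of $\delta$ or $a$, consistent with the paper's claim.
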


\section{Uniform bounds for the specific volume}
The main purpose of this section is to deduce the uniform-in-time pointwise bounds for the specific volume $v\left(t,x\right)$ to the Cauchy problem \eqref{2.2}, \eqref{2.3}, which do not depend on $\delta$ and $a$. To this end, we first give the following lemma, which is a consequence of \eqref{2.10} and Jensen's inequality.

\begin{lemma}  Under the conditions listed in Lemma 2.5, we have for all $k\in\mathbb{Z}$ and $t\in[0,T]$ that there exist $a_{k}(t), b_{k}(t)\in \Omega_{k}:=[-k-1,k+1]$ such that
 \begin{eqnarray}\label{3.1}
 \int_{\Omega_{k}}v(t,x)\mathrm{d}x\sim 1,\quad \int_{\Omega_{k}}\theta(t,x)\mathrm{d}x\sim 1,\quad
v(t,a_{k}(t))\sim 1,\quad \theta(t,b_{k}(t))\sim 1.
  \end{eqnarray}
\end{lemma}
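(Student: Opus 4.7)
The plan is to extract the pointwise information from the $L^1$-type control on the relative entropy provided by Lemma 2.5. Recall that the entropy density defined in \eqref{2.4} contains the two nonnegative contributions $R\Theta\,\Phi(v/V)$ and $C_v\Theta\,\Phi(\theta/\Theta)$, with $\Phi(x)=x-\ln x-1$. Since the hypotheses of Theorem \ref{Th1.1} guarantee that $\Theta(t,x)$ is bounded above and below by constants independent of $\delta$ and $a$, the estimate \eqref{2.10} immediately yields
\begin{equation*}
\int_{\mathbb{R}}\Phi\!\left(\frac{v(t,x)}{V(t,x)}\right)dx+\int_{\mathbb{R}}\Phi\!\left(\frac{\theta(t,x)}{\Theta(t,x)}\right)dx\lesssim 1
\end{equation*}
uniformly in $t\in[0,T]$, $\delta$ and $a$.

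Next, I would apply Jensen's inequality on the interval $\Omega_k$, whose length equals $2$ independent of $k$: by convexity of $\Phi$,
\begin{equation*}
\Phi\!\left(\frac{1}{|\Omega_k|}\int_{\Omega_k}\frac{v(t,x)}{V(t,x)}\,dx\right)\leq \frac{1}{|\Omega_k|}\int_{\Omega_k}\Phi\!\left(\frac{v(t,x)}{V(t,x)}\right)dx\leq C,
\end{equation*}
and similarly for $\theta/\Theta$. Because $\Phi$ is strictly convex, attains its minimum $0$ only at $x=1$, and is coercive (blows up as $x\to 0^+$ and as $x\to\infty$), the sublevel set $\{\Phi\leq C\}$ is a compact interval contained in $(0,\infty)$. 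Therefore
\begin{equation*}
\frac{1}{|\Omega_k|}\int_{\Omega_k}\frac{v(t,x)}{V(t,x)}\,dx\sim 1,\qquad \frac{1}{|\Omega_k|}\int_{\Omega_k}\frac{\theta(t,x)}{\Theta(t,x)}\,dx\sim 1.
\end{equation*}
Combining this with the uniform two-sided bounds $V\sim 1$ and $\Theta\sim 1$ coming from Lemma 2.2 and the assumptions on the approximate rarefaction waves, I conclude $\int_{\Omega_k}v(t,x)\,dx\sim 1$ and $\int_{\Omega_k}\theta(t,x)\,dx\sim 1$.

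Finally, the pointwise statement follows from the intermediate value theorem: since $v(t,\cdot)$ and $\theta(t,\cdot)$ are continuous in $x$, the averages $\frac{1}{|\Omega_k|}\int_{\Omega_k}v(t,x)\,dx$ and $\frac{1}{|\Omega_k|}\int_{\Omega_k}\theta(t,x)\,dx$, which are both $\sim 1$, are attained at some points $a_k(t),b_k(t)\in\Omega_k$, yielding $v(t,a_k(t))\sim 1$ and $\theta(t,b_k(t))\sim 1$. The only subtle point is to be sure that the constants hidden in ``$\sim 1$'' are independent of $t$, $k$, $\delta$ and $a$; this is automatic because the bound from \eqref{2.10}, the bounds on $V,\Theta$, and the fixed length $|\Omega_k|=2$ are all uniform in these parameters. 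I do not expect a serious obstacle: the whole argument is a two-line consequence of Jensen's inequality plus the coercivity of $\Phi$.
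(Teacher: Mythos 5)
Your proof is correct and is essentially the paper's own argument: Lemma 3.1 is justified there only as ``a consequence of \eqref{2.10} and Jensen's inequality,'' which is exactly your chain of reasoning --- nonnegativity of the entropy terms $R\Theta\,\Phi(v/V)$ and $C_v\Theta\,\Phi(\theta/\Theta)$ together with $\Theta\sim 1$ gives the uniform $L^1$ bound on $\Phi(v/V)$ and $\Phi(\theta/\Theta)$, Jensen plus the coercivity of $\Phi$ pins the averages to a compact subinterval of $(0,\infty)$, and the mean value theorem for integrals produces $a_k(t)$, $b_k(t)$. The only caveat is that your (necessary) use of $|\Omega_k|=2$ presupposes the intended reading $\Omega_k=[k-1,k+1]$ rather than the paper's literal $[-k-1,k+1]$, whose length grows with $k$ and would ruin the uniformity of the constants in $k$; your reading is the one consistent with the later applications of the lemma.
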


The next lemma is concerned with a rough estimate on $\theta(t,x)$ in terms of the entropy dissipation rate functional
$V(t)=\displaystyle\int_{\mathbb{R}}\left(\frac{\mu\Theta\psi^{2}_{x}}{v\theta}
+\frac{\kappa(v,\theta)\Theta\chi^{2}_{x}}{v\theta^{2}}\right)(t,x) dx$.

\begin{lemma} Under the conditions listed in Lemma 2.5, we have for $0\leq m\leq\frac{b+1}{2}$ and each $x\in\mathbb{R}$ (without loss of generality, we can assume that $x\in\Omega_k$ for some $k\in\mathbb{Z}$) that
\begin{eqnarray}\label{3.2}
\left |\theta^{m}\left(t,x\right)-\theta^{m}\left(t, b_{k}\left(t\right)\right)\right|\lesssim V^{\frac{1}{2}}\left(t\right)+1
\end{eqnarray}
holds for $0\leq t\leq T$ and consequently
\begin{eqnarray}\label{3.3}
 \left|\theta\left(t,x\right)\right|^{2m}\lesssim 1+V\left(t\right), \quad x\in\overline{\Omega}_{k},\ \  0\leq t\leq T.
\end{eqnarray}
\end{lemma}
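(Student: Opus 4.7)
The plan is to establish \eqref{3.2} via the fundamental theorem of calculus plus Cauchy--Schwarz, using the structural form $\kappa(v,\theta)=\kappa_1+\kappa_2 v\theta^b$ together with the constraint $0\le m\le (b+1)/2$, and then deduce \eqref{3.3} immediately from the pointwise bound $\theta(t,b_k(t))\sim 1$ given by Lemma 3.1.

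For the first step I would write, for any $x\in\Omega_k$,
\begin{equation*}
\theta^{m}(t,x)-\theta^{m}(t,b_k(t)) \;=\; m\int_{b_k(t)}^{x}\theta^{m-1}(t,y)\,\theta_y(t,y)\,dy,
\end{equation*}
and split the integrand as $\theta^{m-1}\theta_y=\bigl(\theta^{m}\sqrt{v/(\kappa(v,\theta)\Theta)}\bigr)\cdot\bigl(\sqrt{\kappa(v,\theta)\Theta/(v\theta^{2})}\,\theta_y\bigr)$. Cauchy--Schwarz then yields
\begin{equation*}
\bigl|\theta^{m}(t,x)-\theta^{m}(t,b_k(t))\bigr|^{2}\;\lesssim\;\Bigl(\int_{\Omega_k}\frac{\theta^{2m}v}{\kappa(v,\theta)\Theta}\,dy\Bigr)\cdot V(t),
\end{equation*}
since the second factor is bounded by $V(t)$ (extending the integration domain from $\Omega_k$ to $\mathbb{R}$).

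The key technical point is to bound $\int_{\Omega_k}\theta^{2m}v/(\kappa(v,\theta)\Theta)\,dy$ by a constant, exploiting $2m\le b+1$. Since $\Theta$ is bounded below by $2\underline{\Theta}$, it suffices to estimate $\int_{\Omega_k}\theta^{2m}v/\kappa(v,\theta)\,dy$. I would split pointwise into two cases: when $\theta(t,y)\le 1$, I use $\kappa(v,\theta)\ge\kappa_1$ to get $\theta^{2m}v/\kappa(v,\theta)\lesssim v$; when $\theta(t,y)\ge 1$, I use $\kappa(v,\theta)\ge\kappa_2 v\theta^{b}$ to get $\theta^{2m}v/\kappa(v,\theta)\lesssim \theta^{2m-b}\le\theta$ (because $2m-b\le 1$). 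Therefore
\begin{equation*}
\int_{\Omega_k}\frac{\theta^{2m}v}{\kappa(v,\theta)\Theta}\,dy\;\lesssim\;\int_{\Omega_k}\bigl(v(t,y)+\theta(t,y)\bigr)\,dy\;\lesssim\;1,
\end{equation*}
by Lemma 3.1. Combining the two steps gives $|\theta^{m}(t,x)-\theta^{m}(t,b_k(t))|\lesssim V(t)^{1/2}$, which is stronger than \eqref{3.2}.

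For \eqref{3.3}, note that $\theta(t,b_k(t))\sim 1$ implies $\theta^{m}(t,b_k(t))\lesssim 1$, hence $\theta^{m}(t,x)\lesssim 1+V(t)^{1/2}$ by \eqref{3.2}; squaring yields $\theta^{2m}(t,x)\lesssim 1+V(t)$. I expect the only subtle step to be the pointwise case split on $\{\theta\le 1\}$ vs.\ $\{\theta\ge 1\}$ in controlling $\theta^{2m}v/\kappa(v,\theta)$; everything else is routine Cauchy--Schwarz and the already-proven Lemma 3.1. Crucially, the resulting estimate depends only on the constants from Lemma 3.1 and on $\kappa_1,\kappa_2,b$, not on $M_1,M_2,\delta$ or $a$, which is exactly what the subsequent analysis requires.
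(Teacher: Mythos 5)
Your overall strategy (fundamental theorem of calculus, Cauchy--Schwarz against the entropy dissipation, and the pointwise case split $\theta\le 1$ versus $\theta\ge 1$ to bound $\int_{\Omega_k}\theta^{2m}v/\kappa(v,\theta)\,dy$ by $\int_{\Omega_k}(v+\theta)\,dy\lesssim 1$ via Lemma 3.1) is exactly the paper's, and the deduction of \eqref{3.3} from \eqref{3.2} and $\theta(t,b_k(t))\sim 1$ is fine. But there is one genuine gap: you claim that $\int_{\Omega_k}\frac{\kappa(v,\theta)\Theta}{v\theta^{2}}\,\theta_y^{2}\,dy$ is bounded by $V(t)$. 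It is not, because $V(t)$ is built from $\chi_x^{2}$, the derivative of the \emph{perturbation} $\chi=\theta-\Theta$, not from $\theta_x^{2}$. Since $\Theta$ is the (non-constant) approximate rarefaction profile, $\theta_y=\chi_y+\Theta_y$ and the cross term $\Theta_y$ is not controlled by $V(t)$ at all. This is precisely why the lemma asserts $\lesssim V^{1/2}(t)+1$ rather than the ``stronger'' bound $\lesssim V^{1/2}(t)$ you announce; the fact that you end up proving something strictly stronger than the statement should have been a warning sign.

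The fix is routine and is what the paper does: write $\theta_y=\Theta_y+\chi_y$ in the integrand, run your Cauchy--Schwarz argument on the $\chi_y$ piece (which does produce $V^{1/2}(t)$ times your uniformly bounded weight integral), and estimate the remaining term $\int_{\Omega_k}\theta^{m-1}|\Theta_y|\,dy$ separately. Using the \emph{a priori} bound $\theta\le M_2$, $m-1\le (b-1)/2$, and the decay of $\|\Theta_x\|$ from Lemma 2.2, this extra term is $\lesssim M_2^{(b-1)/2}\delta^{2}$, which is $\lesssim 1$ thanks to the standing smallness assumption \eqref{2.7-delta}. Note that, unlike the rest of your argument, this last step is \emph{not} independent of $M_2$: it relies on the coupling between $\delta$ and $M_1,M_2$ encoded in \eqref{2.7-delta}, so your closing claim that the whole estimate avoids any use of $M_1,M_2,\delta$ needs to be tempered accordingly.
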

\begin{proof} We have from \eqref{1.4} that
\begin{eqnarray}\label{3.4}
 &&\left|\theta^{m}\left(t,x\right)-\theta^{m}\left(t, b_{k}\left(t\right)\right)\right|\nonumber\\
 &\lesssim&\int_{\Omega_{k}}\left|\theta^{m-1}
 \left(\Theta_{x}+\chi_{x}\right)\right|dx\\
 &\lesssim&\left(\int_{\Omega_{k}}\frac{v\theta^{2m}}{1+v\theta^{b}}dx\right)^{\frac{1}{2}}\left(\int_{\Omega_{k}}\left(\frac{\mu\Theta\psi^{2}_{x}}{v\theta}
 +\frac{\kappa(v,\theta)\Theta\chi^{2}_{x}} {v\theta^{2}}\right)dx\right)^{\frac{1}{2}} +\int_{\Omega_{k}}\theta^{m-1}\left|\Theta_{x}\right|dx\nonumber\\
 &\lesssim& V^{\frac{1}{2}}(t)+M_{2}^{\frac{b-1}{2}}\delta^{2}\nonumber\\
 &\lesssim& V^{\frac{1}{2}}(t)+1.\nonumber
\end{eqnarray}
It is worth to point out that we have used the assumption $0\leq m\leq\frac{b+1}{2}$, boundedness of $\Omega_{k}$, \eqref{2.6}, \eqref{2.7-delta}, and \eqref{3.1} in deriving the above inequality.
\end{proof}

The next lemma will give a local representation of $v(t,x)$ by using the following cut-off function $\varphi(x)\in W^{1,\infty}\left(\mathbb{R}\right)$
\begin{equation}\label{3.5}
 \varphi\left(x\right)=
 \begin{cases}
 1, & $$x\leq k+1$$,\\
 k+2-x, & $$ k+1\leq x\leq k+2$$,\\
 0, & $$ x\geq k+2$$.
 \end{cases}
\end{equation}

\begin{lemma} Under the assumptions stated Theorem 1.1, we have for each $0\leq t\leq T$ that
\begin{eqnarray}\label{3.6}
v\left(t,x\right)=B\left(t,x\right)Q\left(t\right)+\frac{1}{\mu}\int_{0}^{t}\frac{B\left(t,x\right)Q\left(t\right)v\left(\tau,x\right)
p\left(\tau,x\right)}{B\left(\tau,x\right)Q\left(\tau\right)}d\tau,
\quad x\in\overline{\Omega}_k.
\end{eqnarray}
Here
\begin{eqnarray}\label{3.7}
B\left(t,x\right)&&:=v_{0}\left(x\right)\exp\left\{\frac{1}{\mu}\int_{x}^{\infty}\left(u_{0}\left(y\right)-u\left(t,y\right)\right)\varphi\left(y\right)dy\right\},\nonumber\\
Q\left(t\right)&&:=\exp\left\{\frac{1}{\mu}\int_{0}^{t}\int_{k+1}^{k+2}\sigma\left(\tau,y\right)\right\},\\
\sigma &&:=-p(v,\theta)+\frac{\mu u_{x}}{v}.\nonumber
\end{eqnarray}
\end{lemma}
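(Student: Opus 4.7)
The plan is to derive the representation by turning the momentum equation into a first-order ODE in $t$ for a modified version of $\ln v$, integrate in $t$, and recast the resulting exponential of $\int_0^t p\,d\tau$ as a Duhamel-type Volterra identity. First, I would rewrite $\eqref{1.1}_2$ as $u_t = \sigma_x$ with $\sigma := -p(v,\theta) + \mu u_x/v$ and use $\eqref{1.1}_1$ to replace $\mu u_x/v$ by $\mu(\ln v)_t$, obtaining the pointwise identity $\mu(\ln v)_t = p(v,\theta) + \sigma$.

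The key step is to introduce the auxiliary functional $\Phi(t,x) := \int_x^{+\infty}(u(t,y)-u_0(y))\varphi(y)\,dy$, where $\varphi$ is the cut-off in \eqref{3.5}. I would differentiate in $t$, substitute $u_t = \sigma_x$, and integrate by parts; since $\varphi$ is compactly supported, no boundary term at $+\infty$ appears, and for $x \in \overline{\Omega}_k$ the relations $\varphi(x)=1$ and $\varphi_y = -\mathbf{1}_{[k+1,k+2]}$ reduce the resulting expression to
\[
\Phi_t(t,x) = -\sigma(t,x) + \int_{k+1}^{k+2}\sigma(t,y)\,dy.
\]
Combining this with $\mu(\ln v)_t = p + \sigma$ and integrating in $\tau \in [0,t]$, with $\Phi(0,x) = 0$, gives after exponentiation the intermediate formula
\[
v(t,x) = B(t,x)\,Q(t)\exp\!\left\{\frac{1}{\mu}\int_0^t p(v(\tau,x),\theta(\tau,x))\,d\tau\right\},
\]
with $B(t,x)$ and $Q(t)$ exactly as in \eqref{3.7}.

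Finally, I would convert this exponential form into the Volterra form \eqref{3.6}. Setting $F(t,x):=\exp\{\mu^{-1}\int_0^t p(\tau,x)\,d\tau\}$, one has $F_t = \mu^{-1}pF$ and $F(0,x)=1$, so $F(t,x) = 1 + \mu^{-1}\int_0^t p(\tau,x)F(\tau,x)\,d\tau$; substituting the identity $F(\tau,x) = v(\tau,x)/[B(\tau,x)Q(\tau)]$ back in and multiplying through by $B(t,x)Q(t)$ produces \eqref{3.6}. The only genuinely delicate point in the argument is the integration by parts with the cut-off $\varphi$, together with the careful bookkeeping of the boundary term depending on whether $x$ lies to the left of, inside, or to the right of the support of $\varphi_y$; the restriction $x \in \overline{\Omega}_k$ (equivalently $x \leq k+1$) is precisely what makes the surface contribution at $x$ collapse to $\sigma(t,x)$ and the bulk contribution localize to the strip $[k+1,k+2]$, giving the clean identity stated.
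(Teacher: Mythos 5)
Your derivation is correct and is exactly the standard Kazhikhov-type representation argument with Jiang's cut-off localization that the paper relies on (it omits the proof, referring to the technique of Jiang and Liao--Zhao): writing $\mu(\ln v)_t = p+\sigma$, eliminating $\sigma(t,x)$ via the functional $\Phi$ and the identity $\Phi_t=-\sigma(t,x)+\int_{k+1}^{k+2}\sigma\,dy$ for $x\le k+1$, exponentiating, and converting $\exp\{\mu^{-1}\int_0^t p\,d\tau\}$ into the Volterra form. All signs and the localization of $\varphi_y$ to $[k+1,k+2]$ check out, so the proposal is a complete proof of \eqref{3.6}--\eqref{3.7}.
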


With the above presentation in hand, we can deduce uniform-in-time pointwise bounds of $v\left(t,x\right)$ by repeating the argument used in \cite{Liao-Zhao-JDE-2018}, and we omit the proof for brevity.
\begin{lemma} Assume that the conditions listed in Lemma 2.5 hold, then there exists a positive constant $C_{1}$ which depends only on $\underline{V}$, $\overline{V}$, $\underline{\Theta}$, $\overline{\Theta}$, and $H_0$, but independent of $\delta$ and $a$, such that
	\begin{equation}\label{3.8}
	C_{1}^{-1}\leq v(t,x)\leq C_{1}, \quad\forall\ (t,x)\in [0,T]\times\mathbb{R}.
	\end{equation}
\end{lemma}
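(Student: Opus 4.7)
The plan is to deduce the uniform two-sided bounds on $v(t,x)$ from the local representation formula of Lemma 3.3 by controlling each of the three factors in the closed-form expression
$$v(t,x)=B(t,x)\,Q(t)\,\exp\!\left(\frac{1}{\mu}\int_{0}^{t}p(\tau,x)\,d\tau\right),$$
which one obtains by differentiating the Volterra-type identity \eqref{3.6} in $t$ and noting that $v/(BQ)$ solves an ODE with right-hand side $vp/(\mu BQ)$ and initial value $1$. Since $p(v,\theta)>0$, the exponential factor is at least $1$, so the crude inequality $v(t,x)\ge B(t,x)Q(t)$ already gives the lower bound once $B$ and $Q$ are bounded below; the whole analysis then reduces to (i) uniform two-sided bounds on $B$ and $Q$ and (ii) a uniform upper bound on $\int_{0}^{t}p(\tau,x)\,d\tau$.

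For $B(t,x)$, I would decompose $u_{0}-u=(\psi_{0}-\psi)+\left(U(0,\cdot)-U(t,\cdot)\right)$ inside the defining exponential. The perturbation contribution is controlled via the $L^{\infty}(0,T;L^{2})$-bound on $\psi$ supplied by the basic energy estimate \eqref{2.10} (through the $\tfrac12\psi^{2}$ piece of $\eta$), while the $U$-contribution is handled by the $L^{p}$ estimates in Lemma 2.2 together with the fact that $U$ is uniformly bounded between $u_{\pm}$; combined with $v_{0}(x)\sim 1$, this yields $B(t,x)\sim 1$ with constants depending only on $\underline{V},\overline{V},\underline{\Theta},\overline{\Theta}$ and $H_{0}$. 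For $Q(t)$, I would use $v_{t}=u_{x}$ to rewrite
$$\int_{0}^{t}\!\!\int_{k+1}^{k+2}\!\frac{\mu u_{x}}{v}\,dy\,d\tau=\mu\!\int_{k+1}^{k+2}\!\bigl(\ln v(t,y)-\ln v_{0}(y)\bigr)dy,$$
and then invoke Lemma 3.1 to locate reference points $a_{k}(t)\in\Omega_{k}$ with $v(t,a_{k}(t))\sim 1$; combined with the control of $\int\Phi(v/V)dx$ from \eqref{2.10}, this bounds $\int\ln v\,dy$ uniformly. The pressure piece $\int_{0}^{t}\!\int_{k+1}^{k+2}p\,dy\,d\tau$ splits as $R\int\theta/v+a/3\int\theta^{4}$ and is estimated using Lemma 3.2 (which converts powers of $\theta$ into integrals of $V(\tau)$) together with $\int_{0}^{t}V(\tau)d\tau\lesssim 1$ from \eqref{2.10}; the smallness assumption \eqref{2.7-a} renders the radiative contribution subordinate. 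This yields $Q(t)\sim 1$.

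The main obstacle will be the uniform-in-$x$ upper bound on $\int_{0}^{t}p(\tau,x)\,d\tau$, since this is a pointwise-in-$x$ quantity rather than a localized average and therefore resists direct control by the basic entropy estimate. The strategy, following the Kazhikov-Shelukhin--Jiang argument adapted in Liao-Zhao (JDE 2018), is a bootstrap applied to the closed-form representation itself: substituting the expression for $v$ into the integrand $vp/(BQ)$ that appears in \eqref{3.6}, exploiting the already-established bounds on $B$ and $Q$, and then invoking Lemma 3.2 together with the smallness of $a$ from \eqref{2.7-a} to absorb the radiative piece $a\theta^{4}/3$ and to reduce the polytropic piece $R\theta/v$ to an integral controlled by the entropy dissipation $V(\tau)$. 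Once the bound $\int_{0}^{t}p(\tau,x)d\tau\le C$ is established with constants depending only on $\underline{V},\overline{V},\underline{\Theta},\overline{\Theta}$ and $H_{0}$ but independent of $M_{1},M_{2},\delta,a$, the upper bound $v(t,x)\le C_{1}$ follows immediately; the lower bound $v(t,x)\ge C_{1}^{-1}$ has already been obtained. The crucial feature, as emphasized in Remark 2.1, is that every constant tracked in the argument depends only on $\underline{V},\overline{V},\underline{\Theta},\overline{\Theta},H_{0}$, so the bound is independent of $M_{1},M_{2},\delta,a$ and therefore eligible to justify, later, the smallness conditions \eqref{2.7-delta}-\eqref{2.7-a}.
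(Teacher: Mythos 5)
Your factorization $v=B(t,x)\,Q(t)\exp\bigl(\tfrac{1}{\mu}\int_0^tp(\tau,x)\,d\tau\bigr)$ is a correct consequence of \eqref{3.6}, and your treatment of $B(t,x)\sim1$ is sound, but the plan of bounding the remaining two factors separately cannot work: they are individually exponentially degenerate in $t$, and only their product is of order one. Indeed, $\ln Q(t)=\tfrac{1}{\mu}\int_0^t\int_{k+1}^{k+2}(-p+\mu u_x/v)\,dy\,d\tau$; the $u_x/v$ contribution is bounded exactly as you say, but the local pressure average satisfies $\int_{k+1}^{k+2}p(\tau,y)\,dy\ge R\int_{k+1}^{k+2}(\theta/v)\,dy\ge\alpha>0$ uniformly in $\tau$ (Cauchy--Schwarz together with $\int_{k+1}^{k+2}v\,dy\lesssim1$ and the control of $\int\Phi(\theta/\Theta)\,dy$ coming from \eqref{2.10}), so $Q(t)\lesssim e^{-\alpha t/\mu}$ and the claim $Q(t)\sim1$ is false. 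Dually, $\int_0^tp(\tau,x)\,d\tau$ grows linearly in $t$ and admits no uniform upper bound: Lemma 3.2 only yields $\theta(\tau,x)\lesssim1+V(\tau)$, and although $\int_0^tV(\tau)\,d\tau\lesssim1$, the constant part integrates to $t$. Your proposed bootstrap inherits the same defect, so neither your upper bound nor the ``crude'' lower bound $v\ge BQ$ survives.

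The missing idea is to keep the ratio $Q(t)/Q(\tau)$ inside the Volterra formula and to exploit precisely the exponential decay you are trying to rule out. Writing $vp=R\theta+\tfrac{a}{3}v\theta^4$, \eqref{3.6} gives $v(t,x)\lesssim Q(t)+\int_0^t\frac{Q(t)}{Q(\tau)}\bigl(\theta(\tau,x)+a\,v\theta^4(\tau,x)\bigr)\,d\tau\lesssim 1+\int_0^te^{-\alpha(t-\tau)/\mu}\bigl(1+V(\tau)\bigr)\,d\tau\lesssim1$, where Lemma 3.2 (with $2m=1$ and $2m=4$) converts $\theta$ and $a\,v\theta^4$ into $1+V(\tau)$ (the radiative piece being rendered harmless by \eqref{2.7-a}) and the exponential kernel integrates the constant part: this is exactly where the linear growth is killed. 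The lower bound likewise cannot come from the first term, which decays; it comes from the integral term $\tfrac{R}{\mu}\int_0^t\frac{B(t,x)Q(t)}{B(\tau,x)Q(\tau)}\,\theta(\tau,x)\,d\tau$, bounded below for large $t$ by combining $\theta(\tau,b_k(\tau))\sim1$, the oscillation estimate \eqref{3.2}, and the fact that $\{\tau:V(\tau)\ge\epsilon_0\}$ has measure $\lesssim\epsilon_0^{-1}$, so that $\theta(\tau,x)$ stays bounded below on a fixed fraction of any sufficiently long time window; for small $t$ one simply uses $Q(t)\ge e^{-Ct}$. This is the Kazhikhov--Shelukhin/Jiang scheme that the paper invokes from Liao--Zhao (JDE 2018), and it is structurally different from bounding the three factors of your product separately.
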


The following lemma is concerning on the  estimate on the term $\left\|\varphi_{x}\left(t\right)\right\|^{2}$, which will be frequently used later on.
\begin{lemma} Under the assumptions listed in Lemma 2.5, we have for any $0\leq t\leq T$ that
\begin{eqnarray}\label{3.9}
\left\|\varphi_{x}(t)\right\|^2 +\int_{0}^{t}\int_{\mathbb{R}}\theta(\tau,x)\varphi_{x}^{2}(\tau,x)dxd\tau\lesssim 1 +\|\theta\|_{\infty}.
\end{eqnarray}
\end{lemma}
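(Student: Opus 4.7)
The plan is to start from the differential identity \eqref{2.18} already derived in the proof of Lemma 2.5 by multiplying $\eqref{2.2}_2$ by $\varphi_x/v$ and integrating by parts, and then to revisit the error terms $I_6$--$I_{10}$ in \eqref{2.19} with two crucial inputs that were not yet available when Lemma 2.5 was proved: first, the uniform specific-volume bounds $v\sim 1$ from Lemma 3.4, which kill every occurrence of $M_1$ and reduce $1/v^3$, $1/v$, $\mu/v^3$, etc.\ to generic constants; second, the smallness \eqref{2.7-a} of the radiation constant, which ensures $p_\theta=R/v+\tfrac{4}{3}a\theta^3$ is bounded by a constant uniformly in the regime under consideration (since $a\theta^3\le aM_2^3\ll 1$). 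After these reductions, the dissipation term on the left of \eqref{2.19} becomes $\int_0^t\!\!\int_{\mathbb{R}}\theta\,\varphi_x^2\,dxd\tau$, and what remains is to show that the right-hand side is bounded by $C(1+\|\theta\|_\infty)$.

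For the principal term $I_6$ the treatment differs from that in Lemma 2.5 in precisely the way that yields the linear dependence on $\|\theta\|_\infty$: using $v\sim 1$ and $\Theta\ge 2\underline{\Theta}$,
\begin{equation*}
\int_0^t\!\!\int_{\mathbb{R}}\frac{\psi_x^2}{v}\,dxd\tau
\;\lesssim\; \|\theta\|_\infty\int_0^t\!\!\int_{\mathbb{R}}\frac{\mu\Theta\psi_x^2}{v\theta}\,dxd\tau
\;\lesssim\; \|\theta\|_\infty
\end{equation*}
by the basic energy estimate \eqref{2.10}. For the cross term $\int p_\theta\varphi_x\chi_x/v$, a Cauchy--Schwarz splitting absorbs $\epsilon\int\theta\varphi_x^2$ into the left-hand side; the remainder is controlled by $\int\kappa\Theta\chi_x^2/(v\theta^2)\lesssim 1$, because the smallness of $a$ keeps $p_\theta$ bounded so that the coefficient $p_\theta^2\theta/\kappa$ stays uniformly bounded rather than growing like $\theta^{7-b}$ as a careless bound would suggest.

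The remaining error terms $I_7$--$I_{10}$ involve the nonlinearities induced by the rarefaction profile ($V_x,U_x,\Theta_x,V_{xx},U_{xx}$) and the source $g(V,\Theta)_x$; each of them carries either a factor of $\delta^\alpha$ (for some $\alpha>0$) through Lemma 2.2 or an integrable decay factor $(1+\tau)^{-p}$ with $p>1$. Combined with \eqref{2.7-delta}, $v\sim 1$, Lemma 2.5 (which gives $\|(\varphi,\psi,\chi)(t)\|\lesssim 1$), and Cauchy--Schwarz, each of $I_7,\ldots,I_{10}$ is $\lesssim \epsilon\int_0^t\!\!\int\theta\varphi_x^2+C$. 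Collecting everything, choosing $\epsilon$ small, and absorbing into the left-hand side yields \eqref{3.9}.

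The main obstacle, and the reason this lemma does not follow immediately from the estimate \eqref{2.26} already available in Lemma 2.5, is to ensure that the dependence on $\|\theta\|_\infty$ is truly linear: in \eqref{2.26} the coefficients of the dissipation terms on the right had the wasteful form $M_1^2M_2^7+\delta^4M_1^3M_2$. The refinement here rests on the combination Lemma 3.4 $+$ smallness of $a$, which removes both the $M_1$ powers and the superfluous $M_2$ powers coming from the radiative contribution to $p_\theta$, leaving a single factor of $\|\theta\|_\infty$ that is generated only by the trade $\int\psi_x^2/v \leftrightarrow \int\mu\Theta\psi_x^2/(v\theta)$.
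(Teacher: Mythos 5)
Your proposal is correct and follows the paper's own skeleton: both arguments start from the identity \eqref{2.18}/\eqref{2.19} obtained by multiplying $\eqref{2.2}_2$ by $\varphi_x/v$, re-estimate only $I_6$ in light of the uniform volume bounds \eqref{3.8}, and reuse \eqref{2.22}--\eqref{2.25} together with \eqref{2.7-delta} to dispose of $I_7$--$I_{10}$. The one genuine divergence is in the cross term of $I_6$: the paper keeps $p_\theta=R/v+\tfrac43 a\theta^3$ intact, bounds the coefficient $\theta^2p_\theta^2/(\kappa\,|p_v|)$ by $1+\|\theta\|_\infty+\|\theta\|_\infty^{(7-b)_+}$, and then uses the hypothesis $b>6$ of Theorem 1.1 to absorb the last power into $1+\|\theta\|_\infty$; you instead invoke the smallness \eqref{2.7-a} of the radiation constant to make $p_\theta\lesssim1$ outright, which eliminates the exponent $(7-b)_+$ and hence any restriction on $b$ for this term. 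Your route is legitimate under the hypotheses of Lemma 2.5 (which do include \eqref{2.7-a}) and is in fact exactly the paper's own re-estimate \eqref{7.1} in the proof of Theorem 1.2. What it costs is the paper's announced design principle for Theorem 1.1, namely that the smallness of $a$ enters only through the convexity Lemma 2.4 and is deliberately never used to control nonlinear terms; your proof of this lemma therefore belongs to the Theorem 1.2 track rather than the Theorem 1.1 track, and under the weaker hypothesis ``$b>6$ but no quantitative smallness of $a$ beyond convexity'' it would not close, whereas the paper's does. One small imprecision: at this stage \eqref{2.10} gives $\|(\varphi,\psi)(t)\|\lesssim1$ but only $\|\chi(t)\|^2\lesssim 1+M_2^2$ (cf. \eqref{4.12b}); this does not harm $I_7$--$I_{10}$, since every such occurrence carries a factor $\delta$ times powers of $M_1,M_2$ that \eqref{2.7-delta} renders negligible, but the blanket claim $\|(\varphi,\psi,\chi)(t)\|\lesssim 1$ should be stated with that caveat.
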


\begin{proof}
In light of \eqref{2.10}, \eqref{2.20}, and \eqref{3.8}, we can conclude that
\begin{eqnarray}\label{3.10}
 I_{6}&\leq&\frac{1}{10}\int^{t}_{0}\int_{\mathbb{R}}\frac{R\theta\varphi^{2}_{x}}{v^{3}}
 +C\left(\left\|\theta\right\|_{\infty}+
 \left\|\frac{\theta^{2}p^{2}_{\theta}\left(v,\theta\right)}{\kappa(v,\theta)p_{v}\left(v,\theta\right)}\right\|_{\infty}\right)\nonumber\\
&\leq&\frac{1}{10}\int^{t}_{0}\int_{\mathbb{R}}\frac{R\theta\varphi^{2}_{x}}{v^{3}}
 +C\left(1+\left\|\theta\right\|_{\infty} +\|\theta\|^{(7-b)_{+}}_{\infty}\right).
\end{eqnarray}
Here $(7-b)_{+}:=\max\{0,7-b\}$.

Then inserting \eqref{3.10}, \eqref{2.22}-\eqref{2.25} into \eqref{2.19}, we can get \eqref{3.9} by employing \eqref{2.7-delta} and the assumption $b>6$.
\end{proof}

The next lemma pays attention to the estimate on the term $\int_{0}^{t}\left\|\psi_{xx}(\tau)\right\|^{2}d\tau$, which will be useful in deducing the upper bound of $\theta\left(t,x\right)$.

\begin{lemma} Under the conditions listed in Lemma 2.5, we have for any $0\leq t\leq T$ that
\begin{eqnarray}\label{a5.1}
\|\psi_{x}(t)\|^{2}+\int_{0}^{t}\left\|\psi_{xx}(\tau)\right\|^2d\tau\lesssim 1+\|\theta\|^{3}_{\infty}.
\end{eqnarray}
\end{lemma}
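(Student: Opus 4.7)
The plan is to multiply the momentum equation $\eqref{2.2}_2$ by $-\psi_{xx}$ and integrate over $\mathbb{R}$. Integration by parts on the first term yields $\tfrac12\tfrac{d}{dt}\|\psi_x\|^2$, and expanding the viscous term via $u_x = U_x+\psi_x$, $v = V+\varphi$ extracts the good dissipative contribution $\mu\int\psi_{xx}^2/v\,dx$ to the left-hand side. The resulting identity is
\begin{equation*}
\tfrac12\tfrac{d}{dt}\|\psi_x\|^2 + \mu\!\int\!\tfrac{\psi_{xx}^2}{v}\,dx = \!\int\![p(v,\theta)-p(V,\Theta)]_x\psi_{xx}\,dx - \mu\!\int\!\tfrac{U_{xx}}{v}\psi_{xx}\,dx + \mu\!\int\!\tfrac{u_xv_x}{v^2}\psi_{xx}\,dx + \!\int\! g(V,\Theta)_x\psi_{xx}\,dx.
\end{equation*}
Each right-hand term will be handled by Cauchy--Schwarz with a small absorbing parameter $\epsilon$, so that $\epsilon\|\psi_{xx}\|^2$ is absorbed into the dissipative integral on the left.

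For the pressure contribution, I would expand $[p(v,\theta)-p(V,\Theta)]_x$ using $p = R\theta/v + a\theta^4/3$ to produce terms proportional to $\theta\varphi_x$, $(1+\theta^3)\chi_x$, plus lower-order rarefaction-wave factors $V_x,\Theta_x$. Two principal estimates then control this contribution: first, $\int_0^t\!\int\theta^2\varphi_x^2\,dxd\tau \leq \|\theta\|_\infty\int_0^t\!\int\theta\varphi_x^2\,dxd\tau \lesssim \|\theta\|_\infty(1+\|\theta\|_\infty)$ by Lemma 3.5; second, $\int_0^t\!\int(1+\theta^6)\chi_x^2\,dxd\tau \leq \|\theta\|_\infty^{(8-b)_+}\int_0^t\!\int\theta^{b-2}\chi_x^2\,dxd\tau\lesssim 1+\|\theta\|_\infty^2$ thanks to the hypothesis $b>6$ combined with the entropy-dissipation bound $\int_0^t\!\int\kappa(v,\theta)\Theta\chi_x^2/(v\theta^2)\,dxd\tau\lesssim 1$ from Lemma 2.5 and the lower bound $\kappa(v,\theta)\geq \kappa_2v\theta^b$. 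The rarefaction remainder terms $\int_0^t(\|U_{xx}\|^2+\|g(V,\Theta)_x\|^2)d\tau$ are uniformly $O(1)$ by Lemma 2.2.

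The decisive contribution is the nonlinear cross product $\int u_xv_x\psi_{xx}/v^2\,dx$. After expanding $u_xv_x=(U_x+\psi_x)(V_x+\varphi_x)$, the crucial summand $\int(\psi_x\varphi_x)^2 dx$ is treated through the Sobolev inequality $\|\psi_x\|_{L^\infty}^2\leq 2\|\psi_x\|\|\psi_{xx}\|$, together with $\|\varphi_x\|^2\lesssim 1+\|\theta\|_\infty$ from Lemma 3.5 and the basic energy bound $\int_0^t\|\psi_x\|^2\,d\tau\lesssim\|\theta\|_\infty$ (Lemma 2.5), giving
\begin{equation*}
\int_0^t\!\int(\psi_x\varphi_x)^2\,dxd\tau \leq \epsilon\int_0^t\|\psi_{xx}\|^2\,d\tau + C_\epsilon(1+\|\theta\|_\infty)^2\int_0^t\|\psi_x\|^2\,d\tau \leq \epsilon\int_0^t\|\psi_{xx}\|^2\,d\tau + C(1+\|\theta\|_\infty^3).
\end{equation*}
This is precisely where the cubic power $\|\theta\|_\infty^3$ in the target bound comes from. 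The remaining cross products $U_xV_x$, $U_x\varphi_x$, $\psi_xV_x$ are harmless, being controlled by $\int_0^t(\|V_x\|_{L^\infty}^2+\|U_x\|_{L^\infty}^2)d\tau\lesssim\delta$ from Lemma 2.2, contributing at most $1+\|\theta\|_\infty$. Collecting every estimate, choosing $\epsilon$ small enough to absorb all $\|\psi_{xx}\|^2$ contributions into the dissipative integral on the left, integrating in time, and applying Gronwall's inequality to accommodate the time-integrable factor $\delta^2(\tau+1)^{-2}\|\psi_x\|^2$ coming from $\|V_x\|_{L^\infty}^2\|\psi_x\|^2$, we conclude $\|\psi_x(t)\|^2+\int_0^t\|\psi_{xx}\|^2\,d\tau\lesssim 1+\|\theta\|_\infty^3$. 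The main obstacle is tracking the exponent of $\|\theta\|_\infty$ sharply; the Sobolev-based bound for $(\psi_x\varphi_x)^2$ forces exactly the cubic growth and cannot be improved without additional structure.
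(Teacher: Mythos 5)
Your proposal is correct and follows essentially the same route as the paper's proof of Lemma 3.6: multiply $\eqref{2.2}_{2}$ by $-\psi_{xx}$, absorb $\epsilon\|\psi_{xx}\|^2$ into the dissipation, bound the pressure terms via Lemma 3.5 and the entropy dissipation with $b>6$, and treat the critical term $\int_0^t\!\int\psi_x^2\varphi_x^2$ by $\|\psi_x\|_{L^\infty}^2\leq 2\|\psi_x\|\|\psi_{xx}\|$ together with $\|\varphi_x\|^2\lesssim 1+\|\theta\|_\infty$ and $\int_0^t\|\psi_x\|^2d\tau\lesssim\|\theta\|_\infty$, which is exactly how the paper produces the cubic power. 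The only cosmetic difference is your invocation of Gronwall for the $\|V_x\|_{L^\infty}^2\|\psi_x\|^2$ term, which the paper handles directly since $\|V_x\|_{L^\infty}^2\lesssim\delta^4$.
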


\begin{proof}
We multiply $\eqref{2.2}_{2}$ by $-\psi_{xx}$ to get
\begin{eqnarray}\label{a5.2}
&&\partial_{t}\left(\frac{\psi_{x}^{2}}{2}\right)+\frac{\mu\psi^{2}_{xx}}{v}-\left(\psi_{t}\psi_{x}\right)_{x}\nonumber\\
&=&
\left(p(v,\theta)-p(V,\Theta)\right)_{x}\psi_{xx}-\frac{\mu\psi_{xx}U_{xx}}{v}+g\left(V,\Theta\right)_{x}\psi_{xx}\\
&&+\frac{\mu\left(\psi_{x}\varphi_{x}\psi_{xx}+\psi_{x}V_{x}\psi_{xx} +U_{x}\varphi_{x}\psi_{xx}+V_{x}U_{x}\psi_{xx}\right)}{v^{2}}.\nonumber
\end{eqnarray}

Integrating \eqref{a5.2} with respect to $t$ and $x$ over $(0,t)\times\mathbb{R}$, we utilize Cauchy's inequality, Sobolev's inequality, Lemma 2.2, \eqref{3.8}, and \eqref{3.9} to find that
\begin{eqnarray}\label{a5.3}
&&\int_{\mathbb{R}}\frac{\psi_{x}^{2}}{2}dx+\int_{0}^{t}\int_{\mathbb{R}}\frac{\mu\psi_{xx}^{2}}{v}\nonumber\\
&\leq&\epsilon \int_{0}^{t}\int_{\mathbb{R}}\frac{\mu\psi_{xx}^{2}}{v}
+C\left(\epsilon\right)\int_{0}^{t}\int_{\mathbb{R}}\bigg[\left(1+a^{2}\theta^6\right)|\chi_{x}|^2
+\left|\left(V_{x},\Theta_{x}\right)\right|^2|\varphi|^2
+|\theta\varphi_{x}|^2+\chi^{2}|V_{x}|^{2}\nonumber\\
&&+a^{2}\left(1+\theta^4\right)|\Theta_{x}|^2|\chi|^2
+U_{xx}^{2}+\left|g\left(V,\Theta\right)_{x}\right|^{2}+\psi_{x}^{2}\varphi_{x}^{2}+\psi_{x}^{2}V_{x}^{2}+U_{x}^{2}\varphi_{x}^{2}
+V_{x}^{2}U_{x}^{2}\bigg]\\
&\leq&\epsilon \int_{0}^{t}\int_{\mathbb{R}}\frac{\mu\psi_{xx}^{2}}{v}
+C\left(\epsilon\right)\left(1+\|\theta\|^{(8-b)_{+}}_{\infty}+\|\theta\|^{2}_{\infty}+
\int_{0}^{t}\left\|\psi_{x}\right\|\left\|\psi_{xx}\right\|\left\|\varphi_{x}\right\|^{2}d\tau\right)\nonumber\\
&\leq&2\epsilon \int_{0}^{t}\int_{\mathbb{R}}\frac{\mu\psi_{xx}^{2}}{v}
+C\left(\epsilon\right)\left(1+\|\theta\|^{(8-b)_{+}}_{\infty}+\|\theta\|^{2}_{\infty}
+\left(1+\|\theta\|^{2}_{\infty}\right)\int_{0}^{t}\int_{\mathbb{R}}\frac{\psi^{2}_{x}}{\theta}\cdot\theta\right)\nonumber\\
&\leq&2\epsilon \int_{0}^{t}\int_{\mathbb{R}}\frac{\mu\psi_{xx}^{2}}{v}
+C\left(\epsilon\right)\left(1+\|\theta\|^{(8-b)_{+}}_{\infty} +\|\theta\|^{3}_{\infty}\right).\nonumber
\end{eqnarray}
If we Choose $\epsilon>0$ small enough and by using the assumption $b>6$, we can obtain \eqref{a5.1} immediately.
\end{proof}

\section{Uniform upper bound of the absolute temperature}

Now we are in a position to derive an estimate on the upper bound of $\theta\left(t,x\right)$. To this end, recall the definitions of the auxiliary functions $X(t), Y(t)$, and $Z(t)$ defined by \eqref{4.1}, we then try to deduce certain estimates among them by employing the special structure of system \eqref{2.2}.

Our first result is to show that $\|\theta(t)\|_{L^\infty}$, $\|\psi(t)\|$, and $\|\psi_x(t)\|_{L^\infty}$ can be controlled by $Y(t)$ and $Z(t)$, respectively.
\begin{lemma}Under the conditions listed in Lemma 2.5, we have for all $0\leq t\leq T$ that
\begin{eqnarray}
\label{4.2} \|\theta(t)\|_{L^{\infty}} &&\lesssim 1 +Y(t)^{\frac{1}{2b+3}},\\
\label{4.3} \sup_{\tau\in(0,t)}\| \psi_{x}(\tau)\|^2 &&\lesssim 1 +Z(t)^{\frac{1}{2}},\quad
\|\psi_{x}(t)\|_{L^{\infty}}\lesssim 1 +Z(t)^{\frac{3}{8}}.
\end{eqnarray}
\end{lemma}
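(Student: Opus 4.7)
The plan is to handle \eqref{4.2} first via a cut-off/normalization argument in the spirit of Lemma 3.2 (but pushing the exponent up from $m\le(b+1)/2$ to $m=b+3/2$), and then to obtain the two bounds in \eqref{4.3} essentially as immediate consequences of the 1D Sobolev/Gagliardo--Nirenberg inequality combined with the basic energy estimate $\|\psi(t)\|\lesssim 1$.

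For \eqref{4.2}, I would fix $k\in\mathbb{Z}$ and $x\in\Omega_k$ and write
$$\theta^{b+3/2}(t,x)=\theta^{b+3/2}(t,b_k(t))+(b+3/2)\int_{b_k(t)}^{x}\theta^{b+1/2}\theta_\xi\,d\xi,$$
where $\theta(t,b_k(t))\sim 1$ by Lemma 3.1. I would then split $\theta_\xi=\chi_x+\Theta_x$ in the remaining integral. Choosing the Cauchy--Schwarz weights to match the definition of $Y(t)$,
$$\int_{\Omega_k}\theta^{b+1/2}|\chi_x|\,d\xi\le\Bigl(\int_{\Omega_k}\theta\,d\xi\Bigr)^{1/2}\Bigl(\int_{\Omega_k}\theta^{2b}\chi_x^{2}\,d\xi\Bigr)^{1/2}\le\Bigl(\int_{\Omega_k}\theta\,d\xi\Bigr)^{1/2}Y(t)^{1/2},$$
and the auxiliary bound $\int_{\Omega_k}\theta\,d\xi\lesssim 1$ is supplied by the basic entropy estimate \eqref{2.10}: from $\int_{\mathbb{R}}\Theta\,\Phi(\theta/\Theta)\,dx\lesssim 1$, using $\Phi(y)\ge y/3$ for $y\ge K$ together with boundedness of $\Theta$ on $\Omega_k$, I split the integration domain according to $\theta/\Theta\lessgtr K$ to conclude. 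The $\Theta_x$ contribution is estimated by $\|\Theta_x\|_{L^\infty}\int_{\Omega_k}\theta^{b+1/2}d\xi\lesssim \delta\,M(t)^{b-1/2}$, where $M(t):=\|\theta\|_{L^\infty([0,t]\times\mathbb{R})}$ and I used $\int_{\Omega_k}\theta^{b+1/2}\le M(t)^{b-1/2}\int_{\Omega_k}\theta\,d\xi$ together with Lemma 2.2(ii).

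Combining these and taking the supremum over $x\in\mathbb{R}$ and $\tau\in[0,t]$ I would obtain
$$M(t)^{b+3/2}\lesssim 1+Y(t)^{1/2}+\delta\,M(t)^{b-1/2}.$$
Since $b>6$, one has $M(t)^{b-1/2}\le M(t)^{b+3/2}$ whenever $M(t)\ge 1$, so the last term can be absorbed into the left-hand side thanks to the smallness assumption \eqref{2.7-delta} on $\delta$, and the case $M(t)<1$ is trivial. This delivers \eqref{4.2}.

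For \eqref{4.3}, integration by parts gives
$$\|\psi_x(t)\|^{2}=-\int_{\mathbb{R}}\psi(t,x)\psi_{xx}(t,x)\,dx\le\|\psi(t)\|\,\|\psi_{xx}(t)\|,$$
and the basic estimate \eqref{2.10} (through the $\tfrac12(u-U)^{2}$ summand of $\eta$) yields $\|\psi(t)\|\lesssim 1$. Hence $\|\psi_x(t)\|^{2}\le Z(t)^{1/2}$, which gives the first bound in \eqref{4.3} after taking $\sup_{\tau\in(0,t)}$. The pointwise bound then follows from the 1D Gagliardo--Nirenberg inequality $\|\psi_x\|_{L^\infty}^{2}\le C\|\psi_x\|\,\|\psi_{xx}\|\le C\,Z(t)^{1/4}\cdot Z(t)^{1/2}=C\,Z(t)^{3/4}$, applied pointwise in time.

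The only mildly delicate point is the auxiliary bound $\int_{\Omega_k}\theta\,d\xi\lesssim 1$, which relies on the superlinear growth of $\Phi$; once it is in place, the rest is bookkeeping that uses only $b>6$ and the smallness of $\delta$ from \eqref{2.7-delta}. I do not expect to need the radiation-smallness hypothesis \eqref{2.7-a} in this lemma.
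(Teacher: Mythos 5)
Your proposal is correct and follows essentially the same route as the paper: for \eqref{4.2} the paper also starts from the good point $b_k(t)$ of Lemma 3.1, applies the fundamental theorem of calculus to a high power of the temperature (the paper uses $\chi^{2b+3}$ and a Young absorption, you use $\theta^{b+3/2}$ directly — the same computation organized slightly differently), and closes with Cauchy--Schwarz weighted so that $\int_{\Omega_k}\theta^{2b}\chi_x^2\,dx\le Y(t)$ together with the $L^1_{\mathrm{loc}}$ control of $\theta$ coming from the entropy bound \eqref{2.10} (your auxiliary bound $\int_{\Omega_k}\theta\,dx\lesssim 1$ is exactly Lemma 3.1). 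For \eqref{4.3} the paper likewise invokes only the Gagliardo--Nirenberg/Sobolev interpolation you wrote out, and, as you anticipated, \eqref{2.7-a} is not used.
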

\begin{proof} We assume that $x\in[-k-1,k+1]$ for some $k\in\mathbb{Z}$ and $x\geq b_{k}(t)$ and observe that
\begin{eqnarray*}
&&\left(\theta(t,x)-\Theta(t,x)\right)^{2b+3}\\
&=&(\theta(t,b_{k}(t))-\Theta(t,b_{k}(t)))^{2b+3}
+\int_{b_{k}(t)}^{x}\left(2b+3\right)\left(\theta(t,y)-\Theta(t,y)\right)^{2b+2}\chi_{x}(t,y)\mathrm{d}y\nonumber\\
&\lesssim& 1 +\|\left(\theta-\Theta\right)(t)\|_{L^{\infty}}^\frac{{2b+3}}{2}
\left[\int_{-k-1}^{k+1}\left(1+\Phi\left(\frac{\theta}{\Theta}\right)
\right) \mathrm{d}x\right]^{\frac{1}{2}} \left[\int_{\mathbb{R}} \left(\theta-\Theta\right)^{2b}\chi_{x}^{2}\mathrm{d}x\right]^{\frac{1}{2}}\nonumber \\
&\lesssim& 1 +\|\left(\theta-\Theta\right)(t)\|^\frac{{2b+3}}{2}_{L^{\infty}}Y^{\frac{1}{2}}(t).\nonumber
\end{eqnarray*}
Then applying Cauchy's inequality, we can obtain \eqref{4.2}.

Estimates \eqref{4.3} is a consequence of Gagliardo--Nirenberg and Sobolev inequalities. This completes the proof of Lemma 4.1.

\end{proof}

Our next result shows that $X(t)$ and $Y(t)$ can be bounded by $Z(t)$.
\begin{lemma}Under the conditions listed in Lemma 2.5, we have for $0\leq t\leq T$ that
\begin{eqnarray}\label{4.4}
X(t)+Y(t)\lesssim 1+Z(t)^{\frac{6b+9}{12b+4}}.
\end{eqnarray}
\end{lemma}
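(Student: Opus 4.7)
I would multiply the temperature--perturbation equation $(2.2)_3$ by the multiplier $e_\theta(v,\theta)(1+\theta^b)\chi_t$ and integrate over $(0,\tau)\times\mathbb{R}$ for an arbitrary $\tau\in(0,t]$. On the left, $e_\theta(1+\theta^b)\chi_t^2\geq C_v(1+\theta^b)\chi_t^2$ immediately dominates the integrand of $X(\tau)$. To produce $Y(\tau)$, I integrate the resulting heat--flux contribution $\int\!\!\int(1+\theta^b)\chi_t(\kappa(v,\theta)\theta_x/v)_x$ by parts in $x$; splitting $\theta_x=\chi_x+\Theta_x$ and using $\chi_x\chi_{xt}=\tfrac{1}{2}\partial_t(\chi_x^2)$, followed by integration by parts in $t$, the principal boundary contribution takes the form
\begin{equation*}
-\frac{1}{2}\int_{\mathbb{R}}\frac{\kappa(v,\theta)(1+\theta^b)}{v}\chi_x^2(\tau,x)\,dx\Big|_{s=0}^{\tau}.
\end{equation*}
Since $\kappa(1+\theta^b)/v=(\kappa_1/v+\kappa_2\theta^b)(1+\theta^b)$ is comparable to $1+\theta^{2b}$, moving this term to the left and taking $\sup_{\tau\in(0,t]}$ yields control of $Y(t)$ as well.

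\textbf{Error estimates.} The remaining right--hand side contributions split into: (i) the convection cross term $\int\!\!\int\theta p_\theta(1+\theta^b)\psi_x\chi_t$; (ii) the viscous source $\int\!\!\int(1+\theta^b)\chi_t\mu u_x^2/v$; (iii) the ``weight--derivative'' term $\int\!\!\int b\theta^{b-1}(\kappa/v)\theta_x^2\chi_t$ generated by differentiating $(1+\theta^b)$ in the spatial IBP; (iv) the temporal--weight term $\tfrac{1}{2}\int\!\!\int\chi_x^2\partial_t[\kappa(1+\theta^b)/v]$ generated by the IBP in $t$; and (v) small forcing terms involving $r(V,\Theta)$, $U_x$, $\Theta_x$, and $\lambda\phi z$. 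For each of these I would apply Cauchy's inequality to absorb a small fraction of $X$ or $Y$ into the left, and control the remainder with: the basic dissipation bounds from Lemma 2.5; $\|\varphi_x\|^2\lesssim 1+\|\theta\|_{\infty}$ from Lemma 3.5; $\int_0^t\|\psi_{xx}\|^2\,ds\lesssim 1+\|\theta\|_{\infty}^3$ from Lemma 3.6; the $L^\infty$ bounds $\|\theta\|_{L^\infty}\lesssim 1+Y^{1/(2b+3)}$, $\|\psi_x\|_{L^\infty}\lesssim 1+Z^{3/8}$ and $\sup_s\|\psi_x\|^2\lesssim 1+Z^{1/2}$ from Lemma 4.1; and the smoothness/decay of $(V,U,\Theta)$ from Lemma 2.2 together with the smallness conditions \eqref{2.7-delta}--\eqref{2.7-a}.

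\textbf{Closing the estimate.} A key step is the bound, for the term $\int\!\!\int(1+\theta^b)\psi_x^4$ arising from (i) and (ii),
\begin{equation*}
\int_0^t\!\!\int_{\mathbb{R}}(1+\theta^b)\psi_x^4\,dxds\leq \sup_s\|\psi_x\|_{L^\infty}^2\int_0^t(1+\|\theta\|_{L^\infty}^b)\|\psi_x\|^2\,ds\lesssim (1+\|\theta\|_{L^\infty}^{b+1})(1+Z^{3/4}),
\end{equation*}
where I use $\int_0^t\|\psi_x\|^2\,ds\lesssim\|\theta\|_{L^\infty}$ (itself a consequence of Lemma 2.5 and $\int_0^t\!\int\mu\Theta\psi_x^2/(v\theta)\lesssim 1$); this is one of the places where $Z(t)$ enters the right--hand side. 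The weight--derivative terms (iii) and (iv) reduce, via Cauchy, to $\int\!\!\int(1+\theta^{3b-2})\chi_x^4$, which is bounded by splitting one factor $\chi_x^2$ via Sobolev interpolation and using the weighted bound $\int(1+\theta^{2b})\chi_x^2\leq Y$; this again contributes additional powers of $\|\theta\|_{L^\infty}$ and $Z$. Putting all pieces together produces an inequality of the schematic form $X(t)+Y(t)\leq C+C\|\theta\|_{L^\infty}^{k_1}+C\|\theta\|_{L^\infty}^{k_2}Z^{p}$ for suitable $k_1, k_2, p>0$. Substituting $\|\theta\|_{L^\infty}^m\lesssim 1+Y^{m/(2b+3)}$ (whose exponents on $Y$ are strictly less than $1$ because of the hypothesis $b>6$) and applying Young's inequality $Y^\alpha Z^\beta\leq \epsilon Y+C_\epsilon Z^{\beta/(1-\alpha)}$ to the worst cross pair, the $Y$--power is absorbed into the left, and optimizing the resulting $Z$--exponent gives the asserted $(6b+9)/(12b+4)$. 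The principal technical obstacle is the precise bookkeeping of $Y$--powers (originating from $\|\theta\|_{L^\infty}$) against $Z$--powers (originating from $\|\psi_x\|_{L^\infty}$) in the weight--derivative terms (iii)--(iv), which must be balanced so that every $Y$--exponent remaining on the right is strictly less than one while keeping the final $Z$--exponent no larger than $(6b+9)/(12b+4)$.
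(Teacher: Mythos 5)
Your overall strategy (a Kawohl-type weighted multiplier acting on $(2.2)_3$, producing $X$ and $Y$ on the left and then interpolating the remainders against $Y$ and $Z$) is the same as the paper's, which multiplies by $\mathbb{K}_t$ with $\mathbb{K}(v,\theta)=\int_0^\theta\kappa(v,\xi)v^{-1}d\xi$. But two of your steps fail as written.

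First, your bound for the viscous source term is fatal to the closure of the scheme. You estimate
$\int_0^t\!\int(1+\theta^b)\psi_x^4\lesssim\sup_s\|\psi_x\|_{L^\infty}^2\cdot(1+\|\theta\|_\infty^{b})\int_0^t\|\psi_x\|^2\,ds\lesssim(1+\|\theta\|_\infty^{b+1})(1+Z^{3/4})$,
using $\|\psi_x\|_{L^\infty}\lesssim 1+Z^{3/8}$ from \eqref{4.3}. Substituting $\|\theta\|_\infty^{b+1}\lesssim 1+Y^{(b+1)/(2b+3)}$ from \eqref{4.2} leaves the cross term $Y^{(b+1)/(2b+3)}Z^{3/4}$, and Young's inequality converts this into $\epsilon Y+C\,Z^{\frac{3(2b+3)}{4(b+2)}}=\epsilon Y+C\,Z^{\frac{6b+9}{4b+8}}$. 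Since $\frac{6b+9}{4b+8}>1$ for every $b>0$, the resulting inequality $X+Y\lesssim 1+Z^{\frac{6b+9}{4b+8}}$ cannot be combined with Lemma 4.3 ($Z\lesssim 1+X+Y+Z^{\frac{6b+9}{8b+8}}$) to conclude $Z\lesssim 1$; even the standalone $Z^{3/4}$ already exceeds the target exponent $\frac{6b+9}{12b+4}<3/4$. The paper avoids $Z$ here entirely: in \eqref{4.26d} it writes $\|\psi_x\|_{L^\infty}^2\le\|\psi_x\|\,\|\psi_{xx}\|$ pointwise in time and then uses the \emph{time-integrated} bound $\int_0^t\|\psi_{xx}\|^2d\tau\lesssim 1+\|\theta\|_\infty^3$ of Lemma 3.6 (together with $\sup_\tau\|\psi_x(\tau)\|^2\lesssim 1+\|\theta\|_\infty^3$ and $\int_0^t\|\psi_x\|^2d\tau\lesssim\|\theta\|_\infty$), obtaining $1+\|\theta\|_\infty^{b+5}\lesssim 1+Y^{(b+5)/(2b+3)}$, which is absorbed by $\epsilon Y$ since $b>2$. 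This is precisely the point the introduction advertises as the reason no fourth auxiliary function $W(t)$ is needed; your version reintroduces the difficulty that $W(t)$ was invented to handle.

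Second, your weight-derivative term (iv), $\tfrac12\int\!\int\chi_x^2\,\partial_t[\kappa(1+\theta^b)/v]$, contains $\int\!\int\theta^{2b-1}\chi_x^2\chi_t$, and after Cauchy against $\epsilon X$ you are left with $\int\!\int\theta^{3b-2}\chi_x^4$. Controlling a quartic in $\chi_x$ by "Sobolev interpolation" requires an estimate on $\chi_{xx}$ (or on $\|\chi_x\|_{L^\infty}$), which is not available at this stage: the $\int_0^t\|\chi_{xx}\|^2d\tau$ bound is Lemma 4.5, proved \emph{after} and using the conclusion of the present lemma. The paper sidesteps this term structurally: because the multiplier is $\mathbb{K}_t$ and the heat flux equals $\mathbb{K}_x-\mathbb{K}_v v_x$, the principal contribution integrates exactly to $\tfrac12\partial_t(\mathbb{K}_\theta\chi_x)^2$, and the leftover cross terms ($I_{13}$, $I_{14}$, and the $U_x\chi_x^2$ term in \eqref{4.8}) are at worst quadratic in $\chi_x$ times $\psi_x$, $\varphi_x\chi_t$, or $U_x$ — no $\chi_x^4$ ever appears. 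Incidentally, the exponent $\frac{6b+9}{12b+4}$ you are asked to produce arises in the paper from exactly one place, the term $I_{14}$ and its auxiliary quantity $J$ (via $Y^{\frac{b+5}{4b+6}}Z^{3/8}$ after Young), so without reproducing that computation your "optimizing the resulting $Z$-exponent" step is not yet a proof.
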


\begin{proof}
In the same manner as \cite{Kawohl-JDE-1985,Liao-Zhao-JDE-2018}, we set
\begin{eqnarray}\label{4.6}
 \mathbb{K}(v,\theta)=\int_{0}^{\theta}\frac{\kappa(v,\xi)}{v}d\xi=\frac{\kappa_{1}\theta}{v}+\frac{\kappa_{2}\theta^{b+1}}{b+1}.
  \end{eqnarray}
Then we can deduce that
  \begin{eqnarray}\label{4.7}
  \mathbb{K}_{t}(v,\theta)&=&\mathbb{K}_{v}(v,\theta)\psi_{x} +\mathbb{K}_{\theta}(v,\theta)\chi_{t}+\mathbb{K}_{v}(v,\theta)U_{x}+\mathbb{K}_{\theta}(v,\theta)\Theta_{t},\nonumber\\[1mm]
  \mathbb{K}_{x}(v,\theta)&=&\mathbb{K}_{v}(v,\theta)\varphi_{x} +\mathbb{K}_{\theta}(v,\theta)\chi_{x}+\mathbb{K}_{v}(v,\theta)V_{x}+\mathbb{K}_{\theta}(v,\theta)\Theta_{x},\nonumber\\[1mm]
  \mathbb{K}_{xt}(v,\theta)&=&\left(\mathbb{K}_{\theta}(v,\theta)\chi_{x}\right)_{t} +\left[\mathbb{K}_{vv}(v,\theta)(\psi_{x}+U_{x})+\mathbb{K}_{v\theta}(v,\theta)\left(\chi_{t}+\Theta_{t}\right)\right]\varphi_{x}
  +\mathbb{K}_{v}(v,\theta)\psi_{xx}\\
  &&+\left[\mathbb{K}_{vv}(v,\theta)(\psi_{x}+U_{x})+\mathbb{K}_{v\theta}(v,\theta)\left(\chi_{t}+\Theta_{t}\right)\right]V_{x}
  +\mathbb{K}_{v}(v,\theta)U_{xx}\nonumber\\
  &&+\left[\mathbb{K}_{v\theta}(v,\theta)(\psi_{x}+U_{x})+\mathbb{K}_{\theta\theta}(v,\theta)(\chi_{t}+\Theta_{t})\right]\Theta_{x}
  +\mathbb{K}_{\theta}(v,\theta)\Theta_{xt},\nonumber\\
    \left|\mathbb{K}_{v}(v,\theta)\right|&+&\left|\mathbb{K}_{vv} (v,\theta)\right|\lesssim\theta,\quad
    \left|\mathbb{K}_{\theta}(v,\theta)\right|\lesssim 1+\theta^{b}, \quad \left|\mathbb{K}_{v\theta}(v,\theta)\right|\lesssim 1,
    \quad \left|\mathbb{K}_{\theta\theta}(v,\theta)\right|\lesssim \theta^{b-1}.\nonumber
\end{eqnarray}
Hereafter, for simplicity of presentation, we use $\mathbb{K}, p, e, P,$ and $E$ to denote the terms $\mathbb{K}(v,\theta), p(v,\theta), e(v,\theta), p(V,\Theta),$ and $e(V,\Theta)$, respectively.

We multiply $\eqref{2.2}_{3}$ by $\mathbb{K}_{t}$ and integrate the result identity with respect to $t$ and $x$ over $(0,t)\times\mathbb{R}$ to find that
\begin{eqnarray}\label{4.8}
    &&\int_{0}^{t}\int_{\mathbb{R}}e_{\theta}\mathbb{K}_{\theta}\chi_{t}^2
     +\int_{0}^{t}\int_{\mathbb{R}}\left(\mathbb{K}_{\theta}\chi_{x}\right)\left(\mathbb{K}_{\theta}\chi_{x}\right)_{t}
    -\int_{0}^{t}\int_{\mathbb{R}}\mathbb{K}_{\theta}\mathbb{K}_{v\theta}U_{x}\chi_{x}^2\nonumber\\
    &=&\underbrace{\int_{0}^{t}\int_{\mathbb{R}}\left(\mathbb{K}_{\theta}\mathbb{K}_{\theta\theta}\chi_{t}\Theta_{x}^2+\mathbb{K}_{\theta}^2\chi_{t}\Theta_{xx}
     +\mathbb{K}_{\theta}\mathbb{K}_{v\theta} \chi_{t}\Theta_{x}V_{x}\right)}_{I_{11}}\nonumber\\
    &&+\underbrace{\int_{0}^{t}\int_{\mathbb{R}}\left\{\mathbb{K}_{\theta}\mathbb{K}_{\theta\theta}\chi_{x}^2\Theta_{t}
      +\mathbb{K}_{\theta}\mathbb{K}_{v\theta}\chi_{t}\varphi_{x}\Theta_{x}-\mathbb{K}_{\theta}\mathbb{K}_{v\theta}\chi_{x}\chi_{t}V_{x}
     -\left(\theta p_{\theta}-\frac{e_{\theta}\Theta P_{\Theta}}{E_{\Theta}}\right)U_{x}\mathbb{K}_{\theta} \chi_{t}\right\}}_{I_{12}}\nonumber\\
     &&+\underbrace{\int_{0}^{t}\int_{\mathbb{R}}\mathbb{K}_{\theta} \mathbb{K}_{v\theta}\chi_{x}^2\psi_{x}}_{I_{13}}
      -\underbrace{\int_{0}^{t}\int_{\mathbb{R}}\mathbb{K}_{\theta} \mathbb{K}_{v\theta}\chi_{t}\varphi_{x}\chi_{x}}_{I_{14}}\nonumber\\
     &&+\underbrace{\int_{0}^{t}\int_{\mathbb{R}}\left(\lambda\phi z-e_{\theta}r(v,\theta)\right)\mathbb{K}_{\theta}\chi_{t}}_{I_{15}}
     -\underbrace{\int_{0}^{t}\int_{\mathbb{R}}\theta p_{\theta}\mathbb{K}_{\theta}\psi_{x}\chi_{t}}_{I_{16}}
     +\underbrace{\int_{0}^{t}\int_{\mathbb{R}}\frac{\mu u_x^2\mathbb{K}_{\theta}\chi_{t}}{v}}_{I_{17}}.
\end{eqnarray}
Firstly, we find that
\begin{eqnarray}\label{4.9}
    \int_{0}^{t}\int_{\mathbb{R}}e_{\theta}\mathbb{K}_{\theta}\chi_{t}^2\gtrsim
    \int_{0}^{t}\int_{\mathbb{R}}\left(1+a\theta^{3}\right)\left(1+\theta^{b}\right)\chi_{t}^{2}\gtrsim X(t)
\end{eqnarray}
and
\begin{eqnarray}\label{4.10}
    \int_{0}^{t}\int_{\mathbb{R}}\left(\mathbb{K}_{\theta}\chi_{x}\right)\left(\mathbb{K}_{\theta}\chi_{x}\right)_{t}
    &=&\frac{1}{2}\int_{\mathbb{R}}\left(\mathbb{K}_{\theta}\chi_{x}\right)^2(t,x)dx
    -\frac{1}{2}\int_{\mathbb{R}}\left(\mathbb{K}_{\theta}\chi_{x}\right)^2(0,x)dx\nonumber\\
    &\gtrsim& Y(t)-C.
\end{eqnarray}
We now estimate $I_{k}(k=11,12,\ldots,17)$ term by term. For the term $I_{11}$, it follows from Lemma 2.2, \eqref{2.7-delta} and \eqref{4.7} that
\begin{eqnarray}\label{4.11}
    I_{11}&\leq&\epsilon X(t)+C(\epsilon)\int_{0}^{t}\int_{\mathbb{R}}\left[\left(1+\theta^{3b-2}\right)\left|\Theta_{x}\right|^{4}
    +\left(1+\theta^{3b}\right)\left|\Theta_{xx}\right|^{2}
    +\left(1+\theta^{b}\right)\left|\Theta_{x}\right|^{2}\left|V_{x}\right|^{2}\right]\nonumber\\
    &\leq&\epsilon X(t)+C(\epsilon)\left[\left(1+M_{2}^{3b-2}\right)\delta+\left(1+M_{2}^{3b}\right)\delta^{\frac{1}{2}}
    +\left(1+M_{2}^{b}\right)\delta\right]\\
    &\leq&\epsilon X(t)+C(\epsilon).\nonumber
\end{eqnarray}
After simple calculation, we can deduce from Lemma 3.4 that
\begin{eqnarray}\label{4.12}
  \left|\frac{\theta p_{\theta}}{e_{\theta}}-\frac{\Theta P_{\Theta}}{E_{\Theta}}\right|\lesssim |\varphi|+|\chi|.
\end{eqnarray}
On the other hand, by using Taylor's formula, we can deduce for $0<\omega<1$ that
\begin{eqnarray}\label{4.12b}
 \int_{\mathbb{R}}\chi^{2}dx=2\int_{\mathbb{R}}\Phi\left(\frac{\theta}{\Theta}\right)\left(\omega\Theta+(1-\omega)\theta\right)^{2}dx
 \lesssim 1+M_{2}^{2}.
\end{eqnarray}
Thus we can obtain from Lemma 2.2, \eqref{2.7-delta}, \eqref{2.10}, \eqref{3.9}, \eqref{4.7}, \eqref{4.12}, and \eqref{4.12b} that
\begin{eqnarray}\label{4.13}
    I_{12}&\leq&\epsilon X(t)+C(\epsilon)\int_{0}^{t}\int_{\mathbb{R}}\left[\left(1+\theta^{b}\right)\left|\varphi_{x}\right|^{2}\left|\Theta_{x}\right|^{2}
    +\frac{\left(1+\theta^{b}\right)\Theta\left|\chi_{x}\right|^{2}}{v\theta^{^{2}}}\cdot\theta^{2}\left|V_{x}\right|^{2}
    +\left(1+\theta^{b+6}\right)\left(\varphi^{2}+\chi^{2}\right)\left|U_{x}\right|^{2}\right]\nonumber\\
    &&+C\left\|\Theta_{t}\right\|_{L^{\infty}}\int_{0}^{t}\int_{\mathbb{R}}
    \frac{\left(1+\theta^{b}\right)\Theta\left|\chi_{x}\right|^{2}} {v\theta^{^{2}}}\cdot\left(1+\theta^{b+1}\right)\\
    &\leq&\epsilon X(t)+C(\epsilon)\left[\left(1+M_{2}^{b+1}\right)\delta+M_{2}^{2}\delta^{4}+\left(1+M_{2}^{b+8}\right)\delta
    +\delta^{2}\left(1+M_{2}^{b+1}\right)\right]\nonumber\\
     &\leq&\epsilon X(t)+C(\epsilon).\nonumber
\end{eqnarray}

Moreover, we get by combining the estimates \eqref{2.10}, \eqref{4.2}, \eqref{4.3}, and \eqref{4.7} that
\begin{eqnarray}\label{4.14}
    I_{13}&\lesssim&\int_{0}^{t}\int_{\mathbb{R}}
    \frac{\left(1+\theta^{b}\right)\Theta\left|\chi_{x}\right|^{2}}{v\theta^{^{2}}}\cdot\left|\psi_{x}\right|\theta^{2}\lesssim
    \left(1+Y(t)^{\frac{2}{2b+3}}\right)\left(1+Z(t)^{\frac{3}{8}}\right)\nonumber\\
          &\leq& \epsilon Y(t)+C(\epsilon)\left(1+Z(t)^{\frac{6b+9}{16b+8}}\right).
\end{eqnarray}
By employing Sobolev's inequality, \eqref{3.9}, and \eqref{2.10}, we find that
\begin{eqnarray}\label{4.15}
    I_{14}&\leq&\epsilon X(t)+C(\epsilon)\int_{0}^{t}\left\|\frac{\kappa(v,\theta)\chi_{x}}{v}\right\|^{2}_{L^{\infty}}\|\varphi_{x}\|^{2}d\tau\nonumber\\
   &\leq&\epsilon X(t)+C(\epsilon)\left(1+\|\theta\|_{\infty}\right)
   \int_{0}^{t}\int_{\mathbb{R}}\left|\frac{\kappa(v,\theta)\chi_{x}}{v}\right|\left|\left(\frac{\kappa(v,\theta)\chi_{x}}{v}\right)_{x}\right|\\
   &\leq&\epsilon X(t)+C(\epsilon)\left(1+\|\theta\|_{\infty}\right)
   \left(\int_{0}^{t}\int_{\mathbb{R}}\theta^{2}\kappa\left|\left(\frac{\kappa(v,\theta)\chi_{x}}{v}\right)_{x}\right|^{2}\right)^{\frac{1}{2}}
\left(\int^{t}_{0}\int_{\mathbb{R}}\frac{\kappa(v,\theta)\Theta\chi^{2}_{x}}{v\theta^{2}}\right)^{\frac{1}{2}}\nonumber\\
 &\leq&\epsilon X(t)
    +C(\epsilon)\left(1+Y(t)^{\frac{1}{2b+3}}\right)
    \underbrace{\left(\int_{0}^{t}\int_{\mathbb{R}}\left(1+\theta^{b+2}\right)
    \left|\left(\frac{\kappa(v,\theta)\theta_{x}}{v}\right)_{x}
    -\left(\frac{\kappa(v,\theta)\Theta_{x}}{v}\right)_{x}\right|^{2}\right)^{\frac{1}{2}}}_{J^{\frac{1}{2}}}.\nonumber
\end{eqnarray}
And in view of $\eqref{2.2}_{3}$, Lemma 2.2, \eqref{2.7-delta}, \eqref{2.8}, \eqref{2.10}, \eqref{4.3}, and \eqref{4.12}, one has
\begin{eqnarray}\label{4.16}
   J&\lesssim&\int_{0}^{t}\int_{\mathbb{R}}\bigg[\left(1+\theta^{b+2}\right)e^{2}_{\theta}\chi^{2}_{t}
   +\left(1+\theta^{b+2}\right)\theta^{2}p^{2}_{\theta}\psi_{x}^{2}+\left(1+\theta^{b+2}\right)e^{2}_{\theta}\left(\frac{\theta p_{\theta}}{e_{\theta}}-\frac{\Theta P_{\Theta}}{E_{\Theta}}\right)^{2}U_{x}^{2}\nonumber\\ &&+\left(1+\theta^{b+2}\right)\psi_{x}^{4}
   +\left(1+\theta^{b+2}\right)U_{x}^{4}+\left(1+\theta^{b+2}\right)\phi^{2}z^{2}
   +\left(1+\theta^{b+2}\right)e^{2}_{\theta}r^{2}\left(V,\Theta\right)\bigg]\nonumber\\
   &&+\underbrace{\int_{0}^{t}\int_{\mathbb{R}}\left(1+\theta^{b+2}\right)
   \left|\left(\frac{\kappa(v,\theta)\Theta_{x}}{v}\right)_{x}\right|^{2}}_{J^{a}}\nonumber\\
   &\lesssim&J^{a}+\int_{0}^{t}\int_{\mathbb{R}}\left[\left(1+\theta^{b+8}\right)\chi^{2}_{t}+\frac{\mu\Theta\psi^{2}_{x}}{v\theta}
   \cdot\left(1+\theta^{b+11}\right)+\left(1+\theta^{b+8}\right)\left(\chi^{2}+\varphi^{2}\right)U^{2}_{x}\right]\\
   &&   +\left(1+\|\theta\|^{b+3}_{\infty}\right)\left\|\psi_{x}\right\|_{L^{\infty}}^{2}\int^{t}_{0}\int_{\mathbb{R}}
   \frac{\mu\Theta\psi^{2}_{x}}{v\theta}+\left(1+M^{b+8}_{2}\right)\delta^{\frac{4}{3}}\nonumber\\
   &&+\left(1+M^{b+2}_{2}\right)\delta+\left(1+\|\theta\|_{\infty}^{b+\beta+2}\right)\int_{0}^{t}\int_{\mathbb{R}}\phi z^{2}\nonumber\\
   &\lesssim& 1+X(t)\left(1+Y(t)^{^{\frac{8}{2b+3}}}\right)+Y(t)^{^{\frac{b+11}{2b+3}}}
   +\left(1+Y(t)^{\frac{b+3}{2b+3}}\right)\left(1+Z(t)^{\frac{3}{4}}\right) +Y(t)^{\frac{b+\beta+2}{2b+3}}+J^{a},\nonumber
\end{eqnarray}
and
\begin{eqnarray}\label{4.24}
  J^{a}&\lesssim&\int_{0}^{t}\int_{\mathbb{R}}\left(1+\theta^{b+2}\right)\bigg[\theta^{2b}\varphi^{2}_{x}\Theta^{2}_{x}+\theta^{2b}V^{2}_{x}\Theta^{2}_{x}
+\theta^{2b-2}\chi^{2}_{x}\Theta^{2}_{x}+\theta^{2b-2}\Theta^{4}_{x}+\Theta^{2}_{xx}+\theta^{2b}\Theta^{2}_{xx}\nonumber\\
&&+\left(1+\theta^{2b}\right)\Theta^{2}_{x}\left(\varphi_{x}^{2}+V^{2}_{x}\right)\bigg]\nonumber\\
&\lesssim&\left(1+\|\theta\|^{3b+2}_{\infty}\right)\int_{0}^{t}\left\|\Theta_{x}\right\|_{L^{\infty}}^{2}\left\|\varphi_{x}\right\|^{2}d\tau
+\left(1+\|\theta\|^{3b+2}_{\infty}\right)\int_{0}^{t}\int_{\mathbb{R}}V_{x}^{2}\Theta_{x}^{2}\\
&&+\int_{0}^{t}\int_{\mathbb{R}}\frac{\left(1+\theta^{b}\right)\Theta\left|\chi_{x}\right|^{2}}{v\theta^{^{2}}}
\cdot\left(1+\theta^{2b+2}\right)\Theta^{2}_{x}\nonumber\\
&&+\left(1+\|\theta\|^{3b}_{\infty}\right)\int_{0}^{t}\int_{\mathbb{R}}\Theta^{4}_{x}
+\left(1+\|\theta\|^{3b+2}_{\infty}\right) \int_{0}^{t}\int_{\mathbb{R}}\Theta^{2}_{xx}\nonumber\\
&\lesssim&\left(1+M_{2}^{3b+3}\right)\left(\delta^{\frac{1}{2}}+\delta^{4}\right)\lesssim 1.\nonumber
\end{eqnarray}

Thus we can conclude from \eqref{4.16}-\eqref{4.24} that
\begin{eqnarray}\label{4.25}
J &\lesssim& 1+X(t)\left(1+Y(t)^{^{\frac{8}{2b+3}}}\right)+Y(t)^{^{\frac{b+11}{2b+3}}}+Z(t)^{\frac{3}{4}}+Y(t)^{\frac{b+3}{2b+3}}Z(t)^{\frac{3}{4}}
+Y(t)^{\frac{b+\beta+2}{2b+3}}.
\end{eqnarray}
Plugging \eqref{4.25} into \eqref{4.15}, we have
\begin{eqnarray}\label{4.26}
I_{14}\leq\epsilon\left(X(t)+Y(t)\right)+C\left(\epsilon\right)\left(1+Z(t)^{\frac{6b+9}{12b+4}}\right).
\end{eqnarray}
Here we have used the fact that $b>\frac{19}{4}$ and $0\leq\beta<\min\{3b+2,5b-10\}$.

As for the term $I_{15}$, we have from Lemma 2.2, \eqref{2.9}, and the assumption $0\leq\beta<b+3$ that
\begin{eqnarray}\label{4.26a}
I_{15}&\lesssim&\int_{0}^{t}\int_{\mathbb{R}}\left[\left(1+\theta^{b}\right)\phi z+\left(1+\theta^{b}\right)
\left(1+a\theta^{3}\right)\left|r\left(V,\Theta\right)\right|\right]\left|\chi_{t}\right|\nonumber\\
&\leq&\epsilon X(t)+C\left(\epsilon\right)\int_{0}^{t}\int_{\mathbb{R}}\left[\left(1+\theta^{b}\right)\phi^{2}z^{2}+\left(1+\theta^{b+6}\right)
\left|r\left(V,\Theta\right)\right|^{2}\right]\\
&\leq&\epsilon X(t)+C\left(\epsilon\right)\left(1+\left\|\theta\right\|_{\infty}^{b+\beta}+\delta^{\frac{4}{3}}
\left(1+M_{2}^{b+6}\right)\right)\nonumber\\
&\leq&\epsilon\left(X(t)+Y(t)\right)+C\left(\epsilon\right).\nonumber
\end{eqnarray}
For the term $I_{16}$, we employ \eqref{4.7} and the assumption $b>6$ to find that
\begin{eqnarray}\label{4.26b}
I_{16}&\lesssim&\int_{0}^{t}\int_{\mathbb{R}}\left(1+\theta^{b}\right) \theta\left(1+a\theta^{3}\right)\left|\psi_{x}\chi_{t}\right|\nonumber\\
&\leq&\epsilon X(t)+C\left(\epsilon\right)\int_{0}^{t}\int_{\mathbb{R}}\left(1+\theta^{b+9}\right)\frac{\psi_{x}^{2}}{\theta}\\
&\leq&\epsilon X(t)+C\left(\epsilon\right)\left(1+Y(t)^{\frac{b+9}{2b+3}}\right)\nonumber\\
&\leq&\epsilon\left(X(t)+Y(t)\right)+C\left(\epsilon\right).\nonumber
\end{eqnarray}

It suffices to bound the term $I_{17}$. To this end, we conclude from Lemma 2.2 and \eqref{4.7} that
\begin{eqnarray}\label{4.26c}
I_{17}
&\leq&\epsilon X(t)+C\left(\epsilon\right)\int_{0}^{t}\int_{\mathbb{R}}\left(1+\theta^{b}\right)\left(\psi^{4}_{x}+U^{4}_{x}\right)
\leq\epsilon X(t)+C\left(\epsilon\right)\left(1+\int_{0}^{t}\int_{\mathbb{R}}\left(1+\theta^{b}\right)\psi^{4}_{x}\right).
\end{eqnarray}
Then by virtue of Sobolev's inequality, Lemma 3.6, and \eqref{2.10}, we can get
\begin{eqnarray}\label{4.26d}
\int_{0}^{t}\int_{\mathbb{R}}\left(1+\theta^{b}\right)\psi_{x}^{4}&\lesssim&
\left(1+\left\|\theta\right\|^{b}_{\infty}\right)\int_{0}^{t}\left\|\psi_{x}\right\|^{2}_{L^\infty}
     \left\|\psi_{x}\right\|^{2}d\tau\nonumber\\
     &\lesssim&
    \left(1+\left\|\theta\right\|^{b}_{\infty}\right)\int_{0}^{t}\left\|\psi_{x}\right\|^{3}
    \left\|\psi_{xx}\right\|d\tau\nonumber\\
    &\lesssim&
    \left(1+\left\|\theta\right\|^{b+3}_{\infty}\right)
    \left(\int_{0}^{t}\left\|\psi_{x}\right\|^{2}d\tau\right)^{\frac{1}{2}}
    \left(\int_{0}^{t}\left\|\psi_{xx}\right\|^{2}d\tau\right)^{\frac{1}{2}}\\
    &\lesssim& 1+\left\|\theta\right\|^{b+5}_{\infty}.\nonumber
\end{eqnarray}
Thus the combination of \eqref{4.26c}, \eqref{4.26d}, and the assumption $b>2$ gives
\begin{eqnarray}\label{4.27}
I_{17}\leq\epsilon\left(X(t)+Y(t)\right)+C\left(\epsilon\right).
\end{eqnarray}

Substituting \eqref{4.9}-\eqref{4.27} into \eqref{4.8} and by choosing $\epsilon>0$ small enough yield \eqref{4.4}.
\end{proof}

The next lemma tells us that $Z(t)$ can be controlled by $X(t)$ and $Y(t)$.
\begin{lemma} Under the conditions listed in Lemma 2.5, we have for all $0\leq t\leq T$ that
\begin{eqnarray}\label{4.30}
Z(t)\lesssim 1+X(t)+Y(t)+Z(t)^{\frac{6b+9}{8b+8}}.
\end{eqnarray}

\end{lemma}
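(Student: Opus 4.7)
The plan is to use the momentum equation $\eqref{2.2}_{2}$ to express $\mu\psi_{xx}/v$ algebraically as $\psi_t$ plus the pressure gradient $[p(v,\theta)-p(V,\Theta)]_x$, the source $g_x$, the profile correction $\mu U_{xx}/v$, and the cubic flux $\mu u_x v_x/v^{2}$. This reduces the control of $Z(t)=\sup_{s\in[0,t]}\|\psi_{xx}(s)\|^{2}$ to the control of $\sup_{s\in[0,t]}\|\psi_t(s)\|^{2}$ together with residuals that one can bound directly: Lemma~2.2 handles the $g_x$ and $U_{xx}$ pieces; the pressure gradient splits as $p_{\theta}\chi_x$ (absorbed into $Y(t)$, using $b>3$) and $p_v\varphi_x$ (bounded by $\|\theta\|_{\infty}\|\varphi_x\|^{2}\lesssim 1+Y^{3/(2b+3)}$ via Lemma~3.5 and \eqref{4.2}); and the cubic flux yields $\|u_x\|_{\infty}^{2}\|v_x\|^{2}\lesssim(1+Z^{3/4})(1+Y^{1/(2b+3)})$ via Sobolev, \eqref{4.3}, and Lemma~3.5.

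To control $\sup_s\|\psi_t(s)\|^{2}$, I would differentiate $\eqref{2.2}_{2}$ in $t$, multiply by $\psi_t$, and integrate by parts in $x$, exploiting $v_t=u_x$ to expose the dissipation $\mu\psi_{xt}^{2}/v$:
\[
\tfrac{1}{2}\|\psi_t(t)\|^{2}+\int_{0}^{t}\!\!\int_{\mathbb{R}}\tfrac{\mu\psi_{xt}^{2}}{v}\,dxd\tau=\tfrac{1}{2}\|\psi_t(0)\|^{2}+\!\int_{0}^{t}\!\!\int_{\mathbb{R}}\!\left(\tfrac{\mu u_x^{2}}{v^{2}}-\tfrac{\mu U_{xt}}{v}+[p-P]_t+g_t\right)\psi_{xt}\,dxd\tau,
\]
with $\|\psi_t(0)\|^{2}<\infty$ guaranteed by the assumption $\psi_{xx}(0)\in L^{2}(\mathbb{R})$ in Theorem~\ref{Th1.1}. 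Cauchy--Schwarz absorbs an $\varepsilon$-multiple of $\int\!\!\int\mu\psi_{xt}^{2}/v$; the $U_{xt}$ and $g_t$ residuals are $O(1)$ by Lemma~2.2(v); $[p-P]_t$ splits into a piece bounded by $X(t)$ (via $\int\!\!\int(1+\theta^{6})\chi_t^{2}$, legitimate since $b>6$) and a piece $\lesssim 1+Y(t)^{3/(2b+3)}$ (from $\int\!\!\int\theta^{2}\psi_x^{2}$ and the basic energy); finally, the cubic piece reduces to $\int\!\!\int u_x^{4}/v^{3}$, which by Sobolev $\|\psi_x\|_{\infty}^{2}\leq 2\|\psi_x\|\|\psi_{xx}\|$ together with Lemmas~2.5, 3.6, and~4.1 is $\lesssim(1+Z^{1/2})(1+Y^{2/(2b+3)})$.

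The crux is then a pair of Young inequalities that produces exactly the critical self-referential exponent. The cross term $Z^{3/4}Y^{1/(2b+3)}$ arising in the first reduction is handled with conjugate exponents $p=(2b+3)/(2b+2)$, $q=2b+3$, giving $\varepsilon Z^{(6b+9)/(8b+8)}+C(1+Y)$; the cross term $Z^{1/2}Y^{2/(2b+3)}$ from the $\psi_t$-estimate is handled with $p=(6b+9)/(4(b+1))$, $q=(6b+9)/(2b+5)$, yielding the same bound $\varepsilon Z^{(6b+9)/(8b+8)}+C(1+Y)$, where the resulting $Y$-exponent $2(6b+9)/((2b+3)(2b+5))\leq 1$ whenever $b\geq 1/2$ (trivially satisfied under the assumption $b>6$). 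Summing all contributions and choosing $\varepsilon>0$ sufficiently small then establishes $Z(t)\lesssim 1+X(t)+Y(t)+Z(t)^{(6b+9)/(8b+8)}$. The main obstacle is pairing the interpolation bounds $\|\psi_x\|_{\infty}\lesssim 1+Z^{3/8}$ and $\|\theta\|_{\infty}\lesssim 1+Y^{1/(2b+3)}$ with Young exponents that produce precisely the exponent $(6b+9)/(8b+8)<1$ without spawning an uncontrollable $Y$-power; a naive pairing either fails to close the argument (because the $Z$-exponent hits $1$) or introduces a $Y^{\gamma}$ with $\gamma>1$ that cannot be bounded by $Y(t)$ alone.
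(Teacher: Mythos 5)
Your proposal follows essentially the same route as the paper: a weighted energy estimate for the time-differentiated momentum equation to control $\sup_s\|\psi_t(s)\|^2$ and $\int_0^t\|\psi_{xt}\|^2d\tau$, combined with the algebraic expression of $\psi_{xx}$ from $\eqref{2.2}_{2}$, with the same critical Young pairing (conjugate exponents $\tfrac{2b+3}{2b+2}$ and $2b+3$ applied to $Z^{3/4}Y^{1/(2b+3)}$) producing the exponent $\tfrac{6b+9}{8b+8}$. Apart from presenting the two steps in the opposite order and a harmless imprecision in the intermediate $Y$-exponent for the quartic term $\int\!\!\int\psi_x^4$ (which should be $\tfrac{5}{2(2b+3)}$ rather than $\tfrac{2}{2b+3}$, still closing under Young), the argument matches the paper's proof.
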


\begin{proof}
We differentiate $\eqref{2.2}_{2}$ with respect to $t$ and multiply the result identity by $\psi_{t}$ to derive
\begin{eqnarray}\label{4.32}
      &&\left(\frac{\psi_{t}^2}{2}\right)_{t}+\frac{\mu\psi_{xt}^2}{v}
      +\left[\left(\left(p-P\right)_{t}+g(V,\Theta)_{t}-\mu\left(\frac{u_{x}}{v}\right)_{t}\right)\psi_{t}\right]_{x}\nonumber\\
      &=&\left[\frac{\mu\left(\psi_{x}+U_{x}\right)^2}{v^2}-\frac{\mu U_{xt}}{v}+\left(p-P\right)_{t}+g(V,\Theta)_{t}\right]\psi_{xt}.
  \end{eqnarray}
Integrating the above identity with respect to $t$ and $x$ over $(0,t)\times\mathbb{R}$, one has
\begin{eqnarray}\label{4.33}
     &&\int_{\mathbb{R}}\frac{\psi_{t}^{2}}{2}dx +\int_{0}^{t}\int_{\mathbb{R}}\frac{\mu\psi_{tx}^{2}}{v}\nonumber\\
     &=&    \underbrace{\int_{0}^{t}\int_{\mathbb{R}}\left(\frac{\mu\psi_{tx}\psi^{2}_{x}+\mu\psi_{tx}U^{2}_{x}+2\mu\psi_{tx}\psi_{x}U_{x}}{v^{2}}
     -\frac{\mu\psi_{tx}U_{tx}}{v}+g\left(V,\Theta\right)_{t}\psi_{tx}\right)}_{I_{18}}\\
     &&+\int_{\mathbb{R}}\frac{\psi_{0t}^{2}}{2}dx+\underbrace{\int_{0}^{t}\int_{\mathbb{R}}\left(p-P\right)_{t}\psi_{tx}}_{I_{19}}.\nonumber
  \end{eqnarray}
It suffices to estimate the terms $I_{k} (k=18, 19)$. For this purpose, we compute from \eqref{2.7-delta}, \eqref{2.10}, and \eqref{3.8} that
\begin{eqnarray}\label{4.34}
I_{18}&\leq&\epsilon \int_{0}^{t}\int_{\mathbb{R}}\frac{\mu\psi_{tx}^{2}}{v}+C\left(\epsilon\right)
\int_{0}^{t}\int_{\mathbb{R}}\left(\frac{\psi^{2}_{x}}{\theta}\cdot\left(\theta\psi^{2}_{x}+\theta U^{2}_{x}\right)
+U_{x}^{4}+U_{xt}^{2}+\left|g\left(V,\Theta\right)_{t}\right|^{2}\right)\nonumber\\
&\leq&\epsilon \int_{0}^{t}\int_{\mathbb{R}}\frac{\mu\psi_{tx}^{2}}{v}
+C\left(\epsilon\right)\left(\left(1+Y(t)^{\frac{1}{2b+3}}\right) \left(1+Z(t)^{\frac{3}{4}}\right)+\delta+\delta^{4}M_{2}\right)\\
&\leq&\epsilon \int_{0}^{t}\int_{\mathbb{R}}\frac{\mu\psi_{tx}^{2}}{v}
+C\left(\epsilon\right)\left(1+Y(t)+Z(t)^{\frac{6b+9}{8b+8}}\right).\nonumber
\end{eqnarray}
Moreover, it is easy to see that
\begin{eqnarray}\label{4.35}
\left|\left(p-P\right)_{t}\right|^{2}\lesssim\left(1+a^{2}\theta^{6}\right)\chi_{t}^{2}+\left|\Theta_{t}\right|^{2}
\left(\varphi^{2}+\chi^{2}\left(1+\theta^{4}\right)\right) +\chi^{2}\psi_{x}^{2}+\chi^{2}U_{x}^{2}+\psi_{x}^{2} +U_{x}^{2}\varphi^{2}.
\end{eqnarray}
Then it follows from Lema 2.2 and \eqref{2.10} that
\begin{eqnarray}\label{4.36}
I_{19}&\leq&\epsilon \int_{0}^{t}\int_{\mathbb{R}}\frac{\mu\psi_{tx}^{2}}{v}
+C\left(\epsilon\right)\int_{0}^{t}\int_{\mathbb{R}}\left|\left(p-P\right)_{t}\right|^{2}\nonumber\\
&\leq&\epsilon \int_{0}^{t}\int_{\mathbb{R}}\frac{\mu\psi_{tx}^{2}}{v}
+C\left(\epsilon\right)\bigg(X(t)+ \int_{0}^{t}\left\|\Theta_{t}\right\|^{2}_{L^{\infty}}\left(\left\|\varphi\right\|^{2}
+\left(1+\left\|\theta\right\|^{4}_{L^{\infty}}\right)\left\|\chi\right\|^{2}\right)d\tau\nonumber\\
&&+\int_{0}^{t}\int_{\mathbb{R}}\frac{\psi_{x}^{2}}{\theta}\cdot\left(1+\theta^{3}\right)+\int_{0}^{t}
\left\|U_{x}\right\|^{2}_{L^{\infty}}d\tau\bigg)\\
&\leq&\epsilon \int_{0}^{t}\int_{\mathbb{R}}\frac{\mu\psi_{tx}^{2}}{v}
+C\left(\epsilon\right)\left(1+X(t)+Y(t)+\delta\left(1+M^{6}_{2}\right)\right)\nonumber\\
&\leq&\epsilon \int_{0}^{t}\int_{\mathbb{R}}\frac{\mu\psi_{tx}^{2}}{v}
+C\left(\epsilon\right)\left(1+X(t)+Y(t)\right).\nonumber
\end{eqnarray}
Choosing $\epsilon>0$ small enough, the combination of \eqref{4.33}-\eqref{4.36} and \eqref{3.8} shows
\begin{eqnarray}\label{4.37}
     \|\psi_{t}\|^{2}+\int_{0}^{t}\left\|\psi_{tx}(\tau)\right\|^{2}d\tau\lesssim 1+X(t)+Y(t)+Z(t)^{\frac{6b+9}{8b+8}}.
  \end{eqnarray}
Now we are in a position to yield an estimate on $\|\psi_{xx}(t)\|$. Firstly, $\eqref{2.2}_{2}$ tells us that
\begin{eqnarray}\label{4.38}
\psi_{xx}=\frac{\varphi_{x}\psi_{x}+\varphi_{x}U_{x}+V_{x}\psi_{x}+V_{x}U_{x}}{v}-U_{xx}
+\frac{v}{\mu}\left[\psi_{t}+\left(p(v,\theta)-P(V,\Theta)\right)_{x}+g(V,\Theta)_{x}\right].
  \end{eqnarray}
On the other hand, Lemma 2.2, \eqref{2.7-delta}, \eqref{3.9}, \eqref{4.2}, and \eqref{4.12b} show that
\begin{eqnarray}\label{4.39}
    \left\|(p-P)_{x}\right\|^2
    &\lesssim&\int_{\mathbb{R}} \left[\left(1+a^{2}\theta^6\right)|\chi_{x}|^2+\left|\left(V_{x},\Theta_{x}\right)\right|^2|\varphi|^2
    +\left(1+\theta^4\right)|(V_{x},\Theta_{x})|^2|\chi|^2+|\theta\varphi_{x}|^2\right]dx\nonumber\\
    &\lesssim&Y(t)+\left(1+\|\theta\|_{L^{\infty}}^2\right)\left(1+\|\theta\|_{L^{\infty}}\right)
     +\delta^4\left(1+M_{2}^6\right)\\
     &\lesssim&1+Y(t).\nonumber
\end{eqnarray}

Thus one can conclude from Lemma 3.6, \eqref{3.9}, \eqref{4.3}, \eqref{4.37}-\eqref{4.39}, and Young's inequality that
\begin{eqnarray}\label{4.40}
   \int_{\mathbb{R}}\psi^{2}_{xx}dx&\lesssim& \int_{\mathbb{R}}\left(\varphi^{2}_{x}\psi^{2}_{x}+\varphi^{2}_{x}U^{2}_{x}
   +\psi_{x}^{2}V^{2}_{x}+V^{2}_{x}U_{x}^{2}+U_{xx}^{2}+\psi_{t}^{2}+\left|\left(p(v,\theta)-P(V,\Theta)\right)_{x}\right|^{2}
   +\left|\left(g(V,\Theta)\right)_{x}\right|^{2}\right)dx\nonumber\\
   &\lesssim& 1+X(t)+Y(t)+Z(t)^{\frac{6b+9}{8b+8}}+\left\|\psi_{x}\right\|^{2}_{L^{\infty}}\left\|\varphi_{x}\right\|^{2}
   +\left\|U_{x}\right\|^{2}_{L^{\infty}}\left\|\varphi_{x}\right\|^{2}+\left\|V_{x}\right\|^{2}_{L^{\infty}}\left\|\psi_{x}\right\|^{2}\nonumber\\
   &&+\left\|V_{x}\right\|^{2}_{L^{\infty}}\left\|U_{x}\right\|^{2}+\left\|U_{xx}\right\|^{2}+\left\|\left(g(V,\Theta)\right)_{x}\right\|^{2}\nonumber\\
   &\lesssim& 1+X(t)+Y(t)+Z(t)^{\frac{6b+9}{8b+8}}+\delta^{4}\left(1+M_{2}^{3}\right)
   +\left(1+Y(t)^{\frac{1}{2b+3}}\right)\left(1+Z(t)^{\frac{3}{4}}\right)\nonumber\\
   &\lesssim&1+X(t)+Y(t)+Z(t)^{\frac{6b+9}{8b+8}}.
\end{eqnarray}
We thus get the estimate \eqref{4.30} by using the definition of $Z(t)$.
\end{proof}

We can deduce that $Y(t)\lesssim 1$ by combining Lemmas 4.1-4.3. Then the desired upper bound on the absolute temperature $\theta\left(t,x\right)$ follows from \eqref{4.2} immediately. Moreover, we can infer from Lemma 2.1-4.3 that

\begin{lemma} Under the conditions listed in Lemma 2.5, there exists a positive constant $C_{2}$ which depends only on $\underline{V}$, $\overline{V}$, $\underline{\Theta}$, $\overline{\Theta}$, and $H_0$, such that
  \begin{eqnarray}\label{4.41}
 \theta\left(t,x\right) \leq C_{2},\quad \forall
 \left(t,x\right)\in[0,T] \times\mathbb{R}.
  \end{eqnarray}
Moreover, we have for $0\leq t\leq T$ that
\begin{eqnarray}\label{4.42}
&&\sup\limits_{0\leq t<\infty}\left\|\left(\varphi, \psi, \chi, z, \varphi_{x}, \psi_{x}, \psi_{t}, \chi_{x}, \psi_{xx}\right)(t)\right\|^{2}\nonumber\\
&&+\int_{0}^{t}\left\|\left(\sqrt{\theta}\varphi_{x}, \psi_{x}, \chi_{t}, \chi_{x}, \psi_{xx}, \psi_{xt}, z_{x}\right)(\tau)\right\|^{2}d\tau\\
&\lesssim& 1.\nonumber
\end{eqnarray}
and
\begin{eqnarray}\label{4.43}
 \int_{0}^{t}\left\|\psi_{x}(\tau)\right\|^4_{L^4(\mathbb{R})}d\tau\lesssim 1,\quad \|\psi_{x}\|_{L^\infty([0,T]\times\mathbb{R})}\lesssim 1.
\end{eqnarray}
\end{lemma}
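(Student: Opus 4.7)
The plan is to close the bootstrap by combining the three inequalities produced in Lemmas 4.1, 4.2, and 4.3, and then harvest the remaining estimates \eqref{4.42}--\eqref{4.43} from the previously established bounds. First, I would substitute the bound $X(t)+Y(t)\lesssim 1+Z(t)^{(6b+9)/(12b+4)}$ from Lemma 4.2 into the inequality $Z(t)\lesssim 1+X(t)+Y(t)+Z(t)^{(6b+9)/(8b+8)}$ from Lemma 4.3 to obtain
\[
Z(t)\lesssim 1+Z(t)^{\frac{6b+9}{12b+4}}+Z(t)^{\frac{6b+9}{8b+8}}.
\]
Under the hypothesis $b>6$ both exponents are strictly less than $1$, so Young's inequality absorbs the right-hand side and yields $Z(t)\lesssim 1$ uniformly on $[0,T]$. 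Feeding this back into Lemma 4.2 gives $X(t)+Y(t)\lesssim 1$, and then estimate \eqref{4.2} in Lemma 4.1 delivers $\|\theta(t)\|_{L^\infty}\lesssim 1+Y(t)^{1/(2b+3)}\lesssim 1$, which is the asserted pointwise bound \eqref{4.41}. All implicit constants depend only on $\underline{V},\overline{V},\underline{\Theta},\overline{\Theta},H_0$, because they are tracked through the inputs to Lemmas 4.1--4.3 and in particular do not depend on $M_1$, $M_2$, $\delta$, or $a$.

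With the $a$-independent upper bound on $\theta$ secured, I would collect the various norms in \eqref{4.42} by revisiting the earlier estimates. The controls on $\|(\varphi,\psi,\chi)\|$ and on the time integrals of $\|\psi_x\|^2$ and $\|\chi_x\|^2$ come directly from \eqref{2.10} (using the convexity of $\eta$ established in Lemma 2.4 to get $L^2$ control via the $\Phi$-terms, and the fact that $\theta$ is now bounded so the weights $\mu\Theta/(v\theta)$ and $\kappa\Theta/(v\theta^2)$ are bounded below). The controls on $z$ and $z_x$ come from \eqref{2.8}--\eqref{2.9}; the estimate on $\|\varphi_x\|$ together with $\int_0^t\!\int\theta\varphi_x^2\,dxd\tau$ from Lemma 3.5 once $\|\theta\|_\infty\lesssim 1$; the bound on $\|\psi_x\|$ and on $\int_0^t\|\psi_{xx}\|^2 d\tau$ from Lemma 3.6; the bound on $\|\psi_t\|$ and on $\int_0^t\|\psi_{xt}\|^2 d\tau$ from \eqref{4.37} combined with the already-secured $X,Y,Z\lesssim 1$; the bound $\sup\|\psi_{xx}\|^2\lesssim 1$ is just $Z(t)\lesssim 1$; and $\sup\|\chi_x\|^2\lesssim 1$ and $\int_0^t\|\chi_t\|^2 d\tau\lesssim 1$ follow from $Y(t)\lesssim 1$ and $X(t)\lesssim 1$ respectively, because the temperature weights $(1+\theta^{2b})$ and $(1+\theta^b)$ appearing in their definitions are bounded below by $1$.

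Finally, for \eqref{4.43} I would invoke the standard Gagliardo--Nirenberg interpolation $\|\psi_x\|_{L^4}^4\lesssim\|\psi_x\|^3\|\psi_{xx}\|$ and estimate, using the bounds from the previous paragraph,
\[
\int_0^t\|\psi_x(\tau)\|_{L^4}^4\,d\tau\leq\Big(\sup_{0\leq\tau\leq t}\|\psi_x(\tau)\|\Big)^{2}\left(\int_0^t\|\psi_x\|^2\,d\tau\right)^{\!1/2}\!\left(\int_0^t\|\psi_{xx}\|^2\,d\tau\right)^{\!1/2}\lesssim 1,
\]
while the Sobolev embedding $\|\psi_x\|_{L^\infty}^2\lesssim\|\psi_x\|\|\psi_{xx}\|$ immediately gives $\|\psi_x\|_{L^\infty([0,T]\times\mathbb{R})}\lesssim 1$. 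The main obstacle in the argument is conceptually the very first step: ensuring that the bootstrap inequalities from Lemmas 4.2 and 4.3 have exponents strictly less than $1$ so that Young's inequality actually closes. This is precisely what forces the cumulative hypothesis $b>6$ (which also underwrites the numerous intermediate absorptions carried out in the proofs of Lemmas 4.2 and 4.3, in particular the tolerance of nonlinearities like $\|\theta\|_\infty^{b+5}$ in \eqref{a5.1}); once the sub-linear closure is in hand, everything else is a matter of bookkeeping from previously proved lemmas.
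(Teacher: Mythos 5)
Your proposal is correct and follows essentially the same route as the paper, which likewise closes the bootstrap by combining Lemmas 4.1--4.3 (substituting $X(t)+Y(t)\lesssim 1+Z(t)^{\frac{6b+9}{12b+4}}$ into $Z(t)\lesssim 1+X(t)+Y(t)+Z(t)^{\frac{6b+9}{8b+8}}$, absorbing the sublinear powers to get $Z\lesssim 1$, hence $Y\lesssim 1$ and the temperature bound via \eqref{4.2}) and then reads off \eqref{4.42}--\eqref{4.43} from the earlier lemmas exactly as you describe. The only cosmetic remark is that the two exponents are already strictly below $1$ for any $b>5/6$, so the sub-linear closure itself does not require $b>6$; that hypothesis is needed only in the intermediate absorptions of Lemmas 3.5, 3.6, and 4.2, as you correctly note.
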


The next lemma gives nice bounds on the terms $\int_{0}^{t}\left\|\chi_{xx}(\tau)\right\|^{2}d\tau$ and $\left\|z_{x}(t)\right\|^{2}$, whose proof is similar to Lemma 4.5 developed in \cite{Liao-NARWA-2020}. Thus we omit the proof for brevity.

\begin{lemma} Under the conditions listed in Lemma 2.5, we can get for $0\leq t\leq T$ that
\begin{eqnarray}\label{5.7}
\left\|\chi_{x}(t)\right\|^{2}+\int_{0}^{t}\left\|\chi_{xx}(\tau)\right\|^2d\tau\lesssim1,
\end{eqnarray}
and
\begin{eqnarray}\label{5.13}
\left\|z_{x}(t)\right\|^{2}+\int_{0}^{t}\left\|z_{xx}(\tau)\right\|^2d\tau\lesssim 1.
\end{eqnarray}
\end{lemma}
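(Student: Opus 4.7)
The plan is to derive both bounds by standard $H^2$-type energy methods applied directly to $\eqref{2.2}_3$ and $\eqref{2.2}_5$, leaning on the uniform pointwise bounds $v\sim 1$ and $\theta\lesssim 1$ obtained in Lemmas 3.4 and 4.4, together with the package of $L^2$-in-time estimates already assembled in \eqref{4.42}--\eqref{4.43}. A preliminary observation is that the supremum part of \eqref{5.7} is essentially free: since $Y(t)\lesssim 1$ is a byproduct of the closure argument in Lemmas 4.1--4.3, one has
\[\sup_{0\le\tau\le t}\|\chi_x(\tau)\|^2 \le \sup_{0\le\tau\le t}\int_{\mathbb{R}}(1+\theta^{2b})\chi_x^2\,dx \le Y(t)\lesssim 1.\]
So only the integrated bound on $\chi_{xx}$ and the full $H^1$-bound on $z$ require real work.

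For the $\chi_{xx}$ estimate I would solve $\eqref{2.2}_3$ algebraically for $\chi_{xx}$: expanding $\bigl(\kappa(v,\theta)\theta_x/v\bigr)_x$ and using $\theta_{xx}=\chi_{xx}+\Theta_{xx}$ produces
\[\frac{\kappa(v,\theta)}{v}\chi_{xx}= e_\theta\chi_t+\theta p_\theta\psi_x -\frac{\mu u_x^2}{v}-\lambda\phi z +e_\theta\!\left(\frac{\theta p_\theta}{e_\theta}-\frac{\Theta P_\Theta}{E_\Theta}\right)\!U_x+e_\theta r(V,\Theta)-\frac{\kappa}{v}\Theta_{xx}+\mathrm{(lower\ order)},\]
where the lower-order remainder collects products of $\kappa_v,\kappa_\theta$ with $\varphi_x,V_x,\chi_x,\Theta_x$. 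Since $\kappa/v$ is bounded above and below on $[0,T]\times\mathbb{R}$, squaring, integrating over $(0,t)\times\mathbb{R}$, and using Cauchy's inequality yields $\int_0^t\|\chi_{xx}\|^2 d\tau$ on the left. Each term on the right is absorbed using the tools already in hand: the $\chi_t$-term by $X(t)\lesssim 1$, the $\psi_x$-term by \eqref{4.42}, the quartic $u_x^2$-term by $\int_0^t\|\psi_x\|_{L^4}^4 d\tau\lesssim 1$ from \eqref{4.43}, the reaction term by \eqref{2.8} together with $0\le z\le 1$, the approximate-wave contributions by Lemma 2.2, and the remainder by combining $\sup\|\chi_x\|^2\lesssim 1$, $\int_0^t\|\varphi_x\|^2 d\tau\lesssim 1$ from Lemma 3.5, and $\|\theta\|_\infty\lesssim 1$.

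For \eqref{5.13} I would multiply $\eqref{2.2}_5$ by $-z_{xx}$ and integrate by parts to obtain
\[\frac{d}{dt}\frac{\|z_x\|^2}{2}+\int_{\mathbb{R}}\frac{d\,z_{xx}^2}{v^2}dx+\int_{\mathbb{R}}\phi z_x^2\,dx = \int_{\mathbb{R}}\frac{2d\,v_x z_x z_{xx}}{v^3}dx-\int_{\mathbb{R}}\phi'(\theta)\theta_x z\,z_x\,dx.\]
Cauchy's inequality absorbs the $z_{xx}^2$ contribution from the first right-hand term into the dissipation, and the remaining bulk is dominated by $C\|v_x\|_{L^\infty}^2\|z_x\|^2+C\|\theta_x\|^2$, using $0\le z\le 1$ and the uniform bound on $\phi'(\theta)$ guaranteed by $\|\theta\|_\infty\lesssim 1$. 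The factor $\|v_x\|_{L^\infty}$ is handled by Sobolev interpolation $\|v_x\|_{L^\infty}\lesssim \|\varphi_x\|^{1/2}\|\varphi_{xx}\|^{1/2}+\|V_x\|_{L^\infty}$, with $\int_0^t\|\varphi_{xx}\|^2 d\tau$ obtained by solving $\eqref{2.2}_2$ for $\varphi_{xx}$ in terms of $\psi_{xx},\psi_t$ and the already-bounded lower-order objects, together with Lemma 3.6. Gronwall's inequality then closes the estimate.

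The main obstacle is the coupling in the $\chi_{xx}$-identity between the remainder terms $\kappa_\theta\chi_x\theta_x$, $\kappa_v\varphi_x\theta_x$ and the target $\|\chi_{xx}\|$: these cross terms require absorbing a factor of $\|\chi_{xx}\|$ via Sobolev interpolation of $\theta_x$, so the small absorbing constants must be tracked so as not to reopen the $Y(t)$-closure of Section 4. Once this bootstrap is executed carefully — which is possible precisely because $\|\theta\|_\infty$ is now a genuine fixed constant rather than a bootstrapped quantity — the remaining steps are the routine combination of Cauchy--Schwarz, the decay in $t$ of $V_x,U_x,\Theta_x$ from Lemma 2.2, and Gronwall.
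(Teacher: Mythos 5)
Your overall strategy --- an $H^2$-type energy estimate that reads $\chi_{xx}$ off the temperature equation and tests the reactant equation with $-z_{xx}$, all resting on the uniform bounds $v\sim1$, $\theta\lesssim1$ and the package \eqref{4.42}--\eqref{4.43} --- is the standard one and is what the paper has in mind (it omits the proof, pointing to the analogous lemma in \cite{Liao-NARWA-2020}). The $\chi_{xx}$ half is essentially sound: each right-hand term you list is controlled by quantities already established, and the self-interaction terms $\kappa_\theta\theta_x^2$, $\kappa_v v_x\theta_x$ are absorbed exactly as you describe via $\|\chi_x\|_{L^\infty}^2\lesssim\|\chi_x\|\,\|\chi_{xx}\|$ together with $\sup_\tau\|\chi_x\|\lesssim1$ and $\int_0^t\|\chi_x\|^2\,d\tau\lesssim1$. (Use these rather than ``$\int_0^t\|\varphi_x\|^2\,d\tau\lesssim1$ from Lemma 3.5'': Lemma 3.5 only yields the weighted integral $\int_0^t\int\theta\varphi_x^2$, and the lower bound on $\theta$ is merely local in time, so the unweighted time integral of $\|\varphi_x\|^2$ is not actually at your disposal.)

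The genuine gap is in the $z$-estimate. To control $\int 2d\,v_xz_xz_{xx}/v^3$ you place the $L^\infty$-norm on $v_x$ and then claim that $\int_0^t\|\varphi_{xx}\|^2\,d\tau$ can be obtained ``by solving $\eqref{2.2}_2$ for $\varphi_{xx}$''. This step fails: the momentum equation contains no second derivative of $v$ at all --- $(p-P)_x$ involves only $\varphi_x,\chi_x$ and the profile derivatives, and $\mu(u_x/v)_x=\mu u_{xx}/v-\mu u_xv_x/v^2$ involves $\psi_{xx}$ but again only $v_x$ --- so it cannot be solved for $\varphi_{xx}$, and no bound on $\int_0^t\|\varphi_{xx}\|^2\,d\tau$ is available anywhere in the paper ($\varphi_{xx}$ never appears in \eqref{4.42}). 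The repair is immediate: put the $L^\infty$-norm on $z_x$ instead, i.e.\ $\int v_x^2z_x^2\,dx\lesssim\|z_x\|_{L^\infty}^2\|v_x\|^2\lesssim\|z_x\|\,\|z_{xx}\|$, using $\sup_\tau\|v_x\|\lesssim1$ from \eqref{4.42} and Lemma 2.2; absorb the resulting $\|z_{xx}\|$ factor into the dissipation $\int d\,z_{xx}^2/v^2$ and integrate in time using $\int_0^t\|z_x\|^2\,d\tau\lesssim1$, so that neither Gronwall nor any control of $\varphi_{xx}$ is needed. With that substitution your argument closes.
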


\section{A local-in-time lower bound on the absolute temperature}
The following lemma will give a local-in-time lower bound on $\theta(t,x)$. In fact, we can deduce by repeating the argument developed in \cite{Liao-Zhao-JDE-2018} that

\begin{lemma} Under the conditions stated in Lemma 2.5, we have for each $0\leq s\leq t\leq T$ and $x\in\mathbb{R}$ that the following estimate
  \begin{equation}\label{5.16}
  \theta\left(t,x\right)\geq \frac{C\min\limits_{x\in\mathbb{R}}\{\theta(s,x)\}}{1+(t-s)\min\limits_{x\in\mathbb{R}}\{\theta(s,x)\}}
  \end{equation}
holds for some positive constant $C$ which depends only on $\underline{V}$, $\overline{V}$, $\underline{\Theta}$, $\overline{\Theta}$, and $H_0$.
\end{lemma}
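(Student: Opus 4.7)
My plan follows the maximum-principle strategy used in \cite{Liao-Zhao-JDE-2018}, applied pointwise to the temperature equation \eqref{1.8}. For each fixed $t$, suppose $\theta(t,\cdot)$ attains its infimum at some $x_0 = x_0(t) \in \mathbb{R}$ (the noncompactness issue is addressed below). Then $\theta_x(t,x_0) = 0$ and $\theta_{xx}(t,x_0) \geq 0$, so that $(\kappa(v,\theta)\theta_x/v)_x = \kappa\theta_{xx}/v \geq 0$ at $(t,x_0)$. Dropping this nonnegative diffusive contribution together with the nonnegative reaction $\lambda\phi z/e_\theta$, equation \eqref{1.8} reduces at $(t,x_0)$ to
\begin{equation*}
e_\theta(v,\theta)\,\theta_t \;\geq\; \frac{\mu u_x^2}{v} - \theta p_\theta(v,\theta)\,u_x.
\end{equation*}

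The key step is to complete the square in $u_x$ on the right-hand side:
\begin{equation*}
\frac{\mu u_x^2}{v} - \theta p_\theta u_x \;=\; \frac{\mu}{v}\Bigl(u_x - \frac{v\theta p_\theta}{2\mu}\Bigr)^2 - \frac{v(\theta p_\theta)^2}{4\mu} \;\geq\; -\frac{v(\theta p_\theta)^2}{4\mu}.
\end{equation*}
Using $p_\theta = R/v + (4/3)a\theta^3$ and $e_\theta = C_v + 4av\theta^3$ together with the already-established uniform lower bound $v \geq C_1^{-1}$ (Lemma 3.4), the uniform upper bound $\theta \leq C_2$ (Lemma 4.4), and the smallness assumption \eqref{2.7-a} on $a$, a direct computation shows
\begin{equation*}
\frac{v(\theta p_\theta)^2}{4\mu\, e_\theta} \;\leq\; C\,\theta^2,
\end{equation*}
where $C$ depends only on $\underline V, \overline V, \underline\Theta, \overline\Theta, H_0$ and, crucially, is independent of $M_1$, $M_2$, $\delta$, $a$. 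Hence $\theta_t(t,x_0) \geq -C\theta^2(t,x_0)$, which is precisely $\tfrac{d}{dt}\bigl(1/\theta(t,x_0(t))\bigr) \leq C$.

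Setting $W(t) := \sup_{x\in\mathbb{R}} 1/\theta(t,x)$, the pointwise inequality implies $W'(t) \leq C$ almost everywhere (using the Lipschitz regularity in $t$ of $W$, which follows from the $H^1$-type bounds of Lemmas 4.4 and 4.5). Integrating from $s$ to $t$ yields $W(t) \leq W(s) + C(t-s)$, i.e., $1/\theta(t,x) \leq 1/\min_x\theta(s,x) + C(t-s)$ for every $x \in \mathbb{R}$; rearranging this inequality gives exactly \eqref{5.16}.

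The main obstacle is justifying the pointwise argument on the noncompact domain $\mathbb{R}$, since the infimum of $\theta(t,\cdot)$ need not be attained at any finite $x$. This is handled by a standard approximate-minimizer argument exploiting the prescribed far-field behavior $\theta(t,x) \to \theta_\pm$ as $|x| \to \infty$ (which, together with the profile $\Theta(t,\pm\infty) = \theta_\pm$, implies $\chi(t,\cdot) \in H^1(\mathbb{R})$ decays at infinity): if the infimum is realized at a finite point the classical argument applies, whereas if it is realized only in the limit $|x|\to\infty$, the desired bound follows directly from the positive constants $\theta_\pm$. A small regularization $\theta + \varepsilon/(1+x^2)$ followed by $\varepsilon \to 0^+$ can be used if a rigorous attainment argument is preferred.
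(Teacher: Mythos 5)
Your proof is correct and is essentially the argument the paper invokes by citing \cite{Liao-Zhao-JDE-2018}: a maximum-principle estimate on $1/\theta$ derived from the temperature equation \eqref{1.8}, with the diffusion and reaction terms discarded by sign and the quadratic-in-$u_x$ terms handled by completing the square, the resulting constant being uniform thanks to the bounds of Lemmas 3.4 and 4.4 and the smallness of $a$. The only cosmetic difference is that the reference runs the comparison principle directly on the parabolic equation for $h=1/\theta$ (which sidesteps the attainment-of-the-infimum issue you discuss), rather than evaluating at a spatial minimizer.
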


\section{The Proof of main results}

With the above preparations in hand, we now turn to prove our main results.

We first prove Theorem 1.1. To this end, suppose that $(\varphi(t,x), \psi(t,x), \chi(t,x), z(t,x))\in X(0,T;M_1,M_2)$  is a solution to the Cauchy problem \eqref{2.2} and \eqref{2.3} defined on the strip $\Pi_T:=[0,T]\times\mathbb{R}$ and satisfying the \emph{a priori} assumption \eqref{2.6}, then if the assumptions listed in Theorem 1.1 hold true and $\delta>0$ and $a>0$ are chosen sufficiently small such that \eqref{2.7-delta} and \eqref{2.7-a} hold, we can get from Lemma 2.6, Lemma 3.4, Lemma 4.4, and Lemma 5.1 that
\begin{eqnarray}\label{A priori Estimates}
0\leq z(t,x)&\leq &1,\quad \forall (t,x)\in[0,T]\times\mathbb{R},\nonumber\\
C_1^{-1}\leq v(t,x)&\leq& C_1,\quad  \forall (t,x)\in[0,T]\times\mathbb{R},\nonumber\\
\theta(t,x)&\leq &C_2,\quad \forall (t,x)\in[0,T]\times\mathbb{R},\\
\theta(t,x)&\geq& \frac{C_3\min\limits_{x\in\mathbb{R}} \{\theta(s,x)\}}{1+(t-s)\min\limits_{x\in\mathbb{R}}\{\theta(s,x)\}},\quad  \forall (t,x)\in [s,t]\times\mathbb{R}\nonumber
\end{eqnarray}
hold for some positive constants $C_i (i=1,2,3)$ which depend only on $\underline{V}$, $\overline{V}$, $\underline{\Theta}$, $\overline{\Theta}$, and $H_0$.

Having obtained \eqref{A priori Estimates}, Theorem 1.1 can be proved by combining the local solvability result Lemma 2.3 with the continuation argument introduced in \cite{Liao-Zhao-JDE-2018,Wang-Zhao-M3AS-2016} and we omit the details for brevity.

Recall that in the proof of Theorem 1.1, the assumption on the smallness of the radiation constant $a$ is only used in Lemma 2.4 to guarantee that $\tilde{p}(v,s)$ is convex with respect to $v$ and $s$, and we do not use such a smallness assumption elsewhere to control certain nonlinear terms involved. As explained in the introduction, the very reason for such an analysis is that once we can imposed some other assumptions to
guarantee the convexity of $\tilde{p}(v,s)$ with respect to $(v,s)$ in the regime for $v$ and $s$ under our consideration, then one can deduce that similar stability result holds accordingly.

The main purpose of Theorem 1.2 is to show that if we use the smallness of $a$ to control the involved nonlinear terms, then we can relax the assumptions we imposed on the parameters $b$ and $\beta$ while similar stability result still holds.
For this purpose, we only need to re-estimate those terms related to the radiation constant $a$, since the terms can be estimated in the same way as in the proof of  Theorem 1.1.

First of all, we treat the term $\left\|\varphi_{x}(t)\right\|^2$. By using \eqref{2.7-a}, \eqref{2.10}, \eqref{2.20}, and \eqref{3.8}, $I_{6}$ can be re-estimated as
\begin{eqnarray}\label{7.1}
 I_{6}&\leq&\frac{1}{10}\int^{t}_{0}\int_{\mathbb{R}}\frac{R\theta\varphi^{2}_{x}}{v^{3}}
 +C\left(\left\|\theta\right\|_{\infty}+
 \left\|\frac{\theta^{2}\left(\frac{R}{v}+\frac{4a\theta^{3}}{3}\right)^{2}}
 {\kappa(v,\theta)p_{v}\left(v,\theta\right)}\right\|_{\infty}\right)\nonumber\\
&\leq&\frac{1}{10}\int^{t}_{0}\int_{\mathbb{R}}\frac{R\theta\varphi^{2}_{x}}{v^{3}}
 +C\left(1+\left\|\theta\right\|_{\infty}\right).
\end{eqnarray}
Inserting \eqref{7.1}, \eqref{2.22}-\eqref{2.25} into \eqref{2.19} and employing \eqref{3.8}, we can infer that
\begin{eqnarray}\label{7.2}
\left\|\varphi_{x}(t)\right\|^2+\int_{0}^{t}\int_{\mathbb{R}}\theta\varphi_{x}^{2}\lesssim 1 +\|\theta\|_{\infty}.
\end{eqnarray}

Now we deal with the term $\int_{0}^{t}\|\psi_{xx}(\tau)\|^{2}d\tau$. By virtue of \eqref{2.7-a} and \eqref{2.10}, we have
\begin{eqnarray}\label{7.3}
\int_{0}^{t}\int_{\mathbb{R}}\left(1+a^{2}\theta^6\right)|\chi_{x}|^2\lesssim
\int^{t}_{0}\int_{\mathbb{R}}\frac{\kappa(v,\theta)\Theta\chi^{2}_{x}}{v\theta^{2}}\cdot\frac{\theta^{2}}{1+\theta^{b}}\lesssim
1+\|\theta\|^{(2-b)_{+}}_{\infty}
  \end{eqnarray}
Plugging \eqref{7.3} into \eqref{a5.3} and utilizing \eqref{7.2}, we deduce that
\begin{eqnarray}\label{7.4}
\left\|\psi_{x}(t)\right\|^{2}+\int_{0}^{t}\int_{\mathbb{R}}\frac{\mu\psi_{xx}^{2}}{v}
&\leq&2\epsilon \int_{0}^{t}\int_{\mathbb{R}}\frac{\mu\psi_{xx}^{2}}{v}
+C\left(\epsilon\right)\left(1+\|\theta\|^{(2-b)_{+}}_{\infty}+\|\theta\|^{3}_{\infty}\right)\nonumber\\
&\leq&2\epsilon \int_{0}^{t}\int_{\mathbb{R}}\frac{\mu\psi_{xx}^{2}}{v}
+C\left(\epsilon\right)\left(1+\|\theta\|^{3}_{\infty}\right).
  \end{eqnarray}
By choosing $\epsilon>0$ small enough, we can see \eqref{a5.1} still holds true without imposing any condition on the parameter $b$.

On the other hand, \eqref{2.7-a} tells us that
\begin{eqnarray}\label{7.5}
\int_{0}^{t}\int_{\mathbb{R}}\left(1+\theta^{b+2}\right)e^{2}_{\theta}\chi^{2}_{t}\lesssim \int_{0}^{t}\int_{\mathbb{R}}\left(1+\theta^{b+2}\right)\chi^{2}_{t}\lesssim X(t)\left(1+Y(t)^{^{\frac{2}{2b+3}}}\right)
\end{eqnarray}
and
\begin{eqnarray}\label{7.6}
\int_{0}^{t}\int_{\mathbb{R}}\left(1+\theta^{b+2}\right)\theta^{2}p^{2}_{\theta}\psi_{x}^{2}\lesssim \int_{0}^{t}\int_{\mathbb{R}}\frac{\mu\Theta\psi^{2}_{x}}{v\theta}\cdot\left(1+\theta^{b+5}\right)\lesssim 1+Y(t)^{^{\frac{b+5}{2b+3}}}.
\end{eqnarray}
Then \eqref{4.16}, \eqref{4.24}, \eqref{7.5}, and \eqref{7.6} imply that
\begin{eqnarray}\label{7.7}
J &\lesssim& 1+X(t)\left(1+Y(t)^{^{\frac{2}{2b+3}}}\right)+Y(t)^{\frac{b+5}{2b+3}}
+Z(t)^{\frac{3}{4}}+Y(t)^{\frac{b+3}{2b+3}}Z(t)^{\frac{3}{4}}
+Y(t)^{\frac{b+\beta+2}{2b+3}}.
\end{eqnarray}
We utilize \eqref{4.15}, \eqref{7.7}, the assumption $b>\frac{5}{6}$, and $0\leq\beta< 3b+2$ to derive \eqref{4.26}.

Meanwhile, it follows from \eqref{2.7-a}, \eqref{4.7}, and the fact $b>0$ that

\begin{eqnarray}\label{7.8}
I_{16}&\lesssim&\int_{0}^{t}\int_{\mathbb{R}}\left(1+\theta^{b+1}\right) \left|\psi_{x}\chi_{t}\right|\nonumber\\
&\leq&\epsilon X(t)+C\left(\epsilon\right) \int_{0}^{t}\int_{\mathbb{R}}\left(1+\theta^{b+3}\right) \frac{\psi_{x}^{2}}{\theta}\\
&\leq&\epsilon X(t)+C\left(\epsilon\right)\left(1+Y(t)^{\frac{b+3}{2b+3}}\right)\nonumber\\
&\leq&\epsilon\left(X(t)+Y(t)\right)+C\left(\epsilon\right).\nonumber
\end{eqnarray}

We can exploit the same method developed in Section 4 to estimate the other terms. Here we need the condition $0\leq\beta<b+3$ to bound the term $I_{15}$ and $b>2$ to bound the term $I_{17}$. By repeating the argument used to prove Theorem 1.1, we can complete the proof of Theorem 1.2.

\bigbreak
\section{Acknowledgement}
Guiqiong Gong was supported by the Fundamental Research Funds for the Central Universities and the grants from the National Natural Science Foundation of China under contracts 11731008, 11671309, and 11971359. Lin He was partially supported by the Fundamental Research Funds for the Central Universities No.YJ201962.
Yongkai Liao was supported by National Postdoctoral Program for Innovative Talents of China No. BX20180054. We would like to express our thanks to the anonymous referees for their valuable comments, which lead to substantial improvements of the original manuscript. Last but not least, the authors would like to thank Professor Huijiang Zhao for his support and encouragement.

\end{document}